\documentclass[11pt,reqno]{amsart}
\usepackage{amsmath}
\usepackage{amssymb}
\usepackage{mathrsfs}
\usepackage{microtype}
\usepackage{enumitem}
\usepackage{xcolor}
\usepackage[
  colorlinks=true,
  linkcolor=blue,
  citecolor=blue,
  urlcolor=blue
]{hyperref}

\allowdisplaybreaks
\makeatletter
\renewcommand{\@tocline}[7]{\relax
  \ifnum #1>\c@tocdepth
  \else
    \par\addpenalty\@secpenalty\addvspace{#2}%
    \begingroup\hyphenpenalty\@M
    \@ifempty{#4}{%
      \@tempdima\csname r@tocindent\number#1\endcsname\relax
    }{%
      \@tempdima#4\relax
    }%
    \parindent\z@\leftskip#3\relax\advance\leftskip\@tempdima\relax
    \rightskip\@pnumwidth plus4em\parfillskip-\@pnumwidth
    #5\leavevmode\hskip-\@tempdima #6\nobreak
    \leaders\hbox{$\m@th\mkern4mu\hbox{.}\mkern4mu$}\hfill
    \nobreak\hbox to\@pnumwidth{\@tocpagenum{#7}}\par
    \nobreak
    \endgroup
  \fi}
\renewcommand{\l@section}{\@tocline{1}{5pt}{1pc}{}{}}
\renewcommand{\l@subsection}{\@tocline{2}{0pt}{2.5pc}{5pc}{}}
\makeatother

\newtheorem{theorem}{Theorem}[section]
\newtheorem{proposition}[theorem]{Proposition}
\newtheorem{lemma}[theorem]{Lemma}
\newtheorem{corollary}[theorem]{Corollary}
\newtheorem*{conjecture}{Conjecture}
\theoremstyle{definition}

\newtheorem{example}[theorem]{Example}
\newtheorem{remark}[theorem]{Remark}
\numberwithin{equation}{section}
\newcommand{\HH}{\mathcal H}
\newcommand{\NN}{\mathbb N}
\newcommand{\RR}{\mathbb R}

\newcommand{\PP}{\mathbb P}
\newcommand{\cA}{\mathcal A}
\newcommand{\cS}{\mathcal S}
\newcommand{\cL}{\mathcal L}
\newcommand{\cM}{\mathcal M}
\newcommand{\calQ}{\mathcal Q}
\newcommand{\calD}{\mathcal D}
\newcommand{\dimH}{\dim_{\mathrm H}}
\newcommand{\dimB}{\dim_{\mathrm B}}
\newcommand{\diam}{\operatorname{diam}}
\newcommand{\diag}{\operatorname{diag}}

\begin{document}

\title[Critical Hausdorff Measure for Bedford-McMullen Subshifts]
{Critical Hausdorff Measure for Higher-Dimensional Bedford-McMullen Subshifts}

\author{Hua Qiu}
\address{School of Mathematics, Nanjing University, Nanjing, 210093, P. R. China.}
\thanks{The research of Qiu was supported by the National Natural Science Foundation of China, grants 12471087 and 12531004. The research of Wang was supported by the Nanjing University PhD Student Zhujian Program, grant ZJJH2026B08.}
\email{huaqiu@nju.edu.cn}

\author{Qi Wang}
\address{School of Mathematics, Nanjing University, Nanjing, 210093, P. R. China.}
\email{602023210013@smail.nju.edu.cn}

\subjclass[2020]{Primary 28A80; Secondary 37B10, 37A35, 28A78}
\date{}
\keywords{Bedford--McMullen subshift, weak specification,
critical Hausdorff measure, factor map}

\begin{abstract}
Let $\Lambda=\diag(m_1,\ldots,m_d)$ be a diagonal expanding integer matrix with
$m_1\ge\cdots\ge m_d\ge2$ and at least two distinct diagonal entries, and let
$X\subset\cA^{\NN}$ be a subshift with weak specification over
$\cA=\prod_{r=1}^d\{0,1,\ldots,m_r-1\}$. Let $K=R(X)$ be the corresponding
   self-affine set. Writing $\gamma=\dimH K$, we prove a sharp
critical-measure dichotomy:
\begin{align*}
  \dimH K=\dimB K
  &\quad\Longleftrightarrow\quad
  0<\HH^\gamma(K)<\infty,\\
  \dimH K<\dimB K
  &\quad\Longleftrightarrow\quad
  \HH^\gamma(K)=\infty.
\end{align*}
In the second case, $\HH^\gamma$ is not $\sigma$-finite on $K$.
This resolves Peres' 1994 conjecture for planar systems and extends to higher dimensions.
For sofic subshifts, we prove the existence of the box dimension and show that dimension equality
implies that the critical Hausdorff measure is positive and $\sigma$-finite, although it may be
infinite. We also construct two examples
   with $\dimH K<\dimB K$: in one example the critical Hausdorff measure is positive and
   finite, whereas in the other it is infinite but $\sigma$-finite.
\end{abstract}
\maketitle
\enlargethispage{3pt}

\vspace{-35pt}
\tableofcontents

\section{Introduction}

Fix a diagonal expanding integer matrix
and its digit alphabet
\[
  \Lambda=\diag(m_1,\ldots,m_d),\qquad
  m_1\ge\cdots\ge m_d\ge2,\qquad
  \cA=\prod_{r=1}^d\{0,1,\ldots,m_r-1\}.
\]
Assume that $m_1,\ldots,m_d$ take at least two distinct values. Let
$n_1>\cdots>n_s$ be these distinct values, so that $s\ge2$. Choose
$0=d_0<d_1<\cdots<d_s=d$ so that
\[
  m_r=n_i\qquad d_{i-1}<r\le d_i.
\]
Thus the $i$-th group consists of coordinates $d_{i-1}+1,\ldots,d_i$, and the first $i$
groups together consist of coordinates $1,\ldots,d_i$.
A subshift $X\subset\cA^{\NN}$ is a nonempty closed set satisfying
$\sigma(X)\subseteq X$, where
$\sigma(x_1x_2x_3\cdots)=x_2x_3\cdots$ is the left shift. A \textit{finite word}
of length $N$ over $\cA$ is a finite sequence $I=i_1\cdots i_N\in\cA^N$, and its
\textit{length} is denoted by
$|I|=N$. The empty word has length $0$. If $I=i_1\cdots i_N$ and
$J=j_1\cdots j_M$, their \textit{concatenation} is
\[
  IJ=i_1\cdots i_Nj_1\cdots j_M,
  \qquad |IJ|=N+M.
\]
Write $\cL(X)$ for the set of finite words that occur in sequences of $X$, together with the
empty word. The elements of $\cL(X)$ are called \emph{admissible words}. If
$\cS\subset\cA$ is nonempty and $X=\cS^{\NN}$, then $X$ is the \textit{full shift}
on the digit set $\cS$. The subshift $X$ has
\emph{weak specification} \cite{Feng2011} if
there is an integer $p\ge0$ such that, for every $I,J\in\cL(X)$, there exists an
admissible word $W$ with $|W|\le p$ such that $IWJ\in\cL(X)$. Thus any two admissible words can be joined,
although the joining word may depend on the pair. For
$x=(x_k)_{k\ge1}\in X$, each $x_k\in\cA\subset\mathbb Z^d$ is a $d$-dimensional digit vector.
The associated self-affine set is
\begin{equation}\label{eq:geometric-coding-map}
  K=R(X),\qquad
  R(x)=\sum_{k=1}^{\infty}\Lambda^{-k}x_k.
\end{equation}

We use $\HH^t$, $\dimH$, $\underline{\dim}_{\mathrm B}$, and
$\overline{\dim}_{\mathrm B}$ for Hausdorff measure, Hausdorff dimension, lower box
dimension, and upper box dimension, respectively; see \cite{Falconer2014}.
For a bounded set $E$, when its lower and upper box dimensions agree, their common
value is denoted by $\dimB E$.
For a nonempty set $E$, we call $\HH^{\dimH E}(E)$ its \emph{critical Hausdorff
measure}.
\smallskip

The Hausdorff dimension gives the threshold at which Hausdorff measure changes from infinity
to zero, but it does not determine the measure at the threshold itself. More precisely, if
$\gamma=\dimH E$, then $\HH^t(E)=\infty$ for $t<\gamma$ and $\HH^t(E)=0$ for
$t>\gamma$, whereas $\HH^\gamma(E)$ may be zero, positive and finite, or infinite. For a
self-similar set satisfying the open set condition, Hutchinson's theorem gives
$0<\HH^\gamma(E)<\infty$ \cite{Hutchinson1981}. The situation is more delicate for the
diagonal self-affine sets considered here: symbolic cylinders have different side lengths in
different coordinate directions, so their distribution among the coordinate fibres contains
geometric information that is not recorded by the Hausdorff dimension alone. Thus the
critical Hausdorff measure is a genuinely finer quantity than the dimension.

The Bedford--McMullen family is the basic non-conformal setting in which this distinction
can be seen explicitly. When $d=2$ and $X=\cS^{\NN}$ is a full shift, the set $K$ is a
\emph{Bedford--McMullen carpet}.
In 1984, Bedford and McMullen independently computed its Hausdorff and box dimensions
\cite{Bedford1984,McMullen1984}. If all nonempty fibres of the projection of $\cS$ onto
the second coordinate have the same cardinality, then
$\dimH K=\dimB K$ and the critical Hausdorff measure is positive and finite.
McMullen asked about the critical measure when these dimensions are different. In 1994,
Peres answered this question by proving that, in that case, the
critical Hausdorff measure is infinite and not $\sigma$-finite \cite{Peres1994}. Thus, for a planar
Bedford-McMullen full shift and $\gamma=\dimH K$,
\begin{align*}
  \dimH K=\dimB K
  &\quad\Longleftrightarrow\quad
  0<\HH^\gamma(K)<\infty,\\
  \dimH K<\dimB K
  &\quad\Longleftrightarrow\quad
  \HH^\gamma(K)=\infty.
\end{align*}

At the end of the same paper, Peres considered sets coded by a primitive transition
matrix and formulated the following conjecture. Let $\cS\subset\cA$ be a digit set and let
$A=(A_{uv})_{u,v\in\cS}$ be a primitive zero-one matrix; that is, $A^q$ is a positive
matrix for some integer $q\ge1$. The corresponding subshift is
\[
  X_A=\{x=(x_k)_{k\ge1}\in\cS^{\NN}:A_{x_kx_{k+1}}=1\text{ for every }k\ge1\},
  \qquad K_A=R(X_A).
\]
The subshift $X_A$ is called a \emph{primitive subshift of finite type}.

\begin{conjecture}[\cite{Peres1994}]
If $\gamma_A=\dimH K_A<\dimB K_A$, then
$\HH^{\gamma_A}(K_A)=\infty$.
\end{conjecture}

The motivation for this paper is to prove this two-dimensional conjecture and to extend it
to higher-dimensional diagonal affine-invariant sets coded by subshifts satisfying weak
specification.
The main result is the following.

\begin{theorem}\label{thm:main}
Let $X\subset\cA^{\NN}$ be a subshift satisfying weak specification and set $K=R(X)$. Let
$\gamma=\dimH K$. Then
\begin{align}
  \dimH K=\dimB K
  &\quad\Longleftrightarrow\quad
  0<\HH^\gamma(K)<\infty,
  \label{eq:finite-measure-characterization}\\
  \dimH K<\dimB K
  &\quad\Longleftrightarrow\quad
  \HH^\gamma(K)=\infty.
  \label{eq:infinite-measure-characterization}
\end{align}
In the second case, $\HH^\gamma$ is not $\sigma$-finite on $K$.
\end{theorem}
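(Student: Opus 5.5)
Since the three alternatives ($0<\HH^\gamma(K)<\infty$, $\HH^\gamma(K)=\infty$, $\HH^\gamma(K)=0$) are mutually exclusive and $\dimH K\le\underline{\dim}_{\mathrm B}K$, the plan is to prove three implications: (a) $\HH^\gamma(K)>0$ always; (b) if $\dimH K=\dimB K$ then $\HH^\gamma(K)<\infty$; (c) if $\dimH K<\overline{\dim}_{\mathrm B}K$ then $\HH^\gamma$ is not $\sigma$-finite on $K$. Together these give both equivalences and the final assertion. Along the way we use that for weak specification subshifts the box dimension exists. It equals a weighted sum $\sum_i (\frac{1}{\log n_i}-\frac{1}{\log n_{i-1}})h_{\rm top}(\pi_i X)$ of topological entropies of the projections $\pi_i$ onto the first $d_i$ coordinates (with $1/\log n_0:=0$). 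This follows from counting approximate cubes: an approximate cube of side $n_1^{-N}$ is determined by a word of length $\ell_i(N)=\lfloor N\log n_1/\log n_i\rfloor$ in each projected factor, and weak specification makes the counts $\#\cL_n(\pi_iX)$ submultiplicative and supermultiplicative up to constants, $\#\cL_n\asymp e^{nh}$. We also use the Kenyon--Peres variational formula $\dimH K=\sup_\mu \sum_i(\frac1{\log n_i}-\frac1{\log n_{i-1}})h_{\pi_i\mu}$ over invariant $\mu$, with equality attained at some ergodic $\mu$.

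For (a) and (b), the key is the uniform count estimate $\#\cL_n(Y)\asymp e^{nh}$ for each factor $Y=\pi_iX$; the factor inherits weak specification. When $\dimH=\dimB$, the box-counting bound at scale $n_1^{-N}$ is $\asymp n_1^{N\gamma}$ cubes of diameter $\asymp n_1^{-N}$, giving $\HH^\gamma(K)<\infty$ directly. For the lower bound I would build a Frostman-type measure $\nu$ as a weak-$*$ limit of measures that give each approximate cube roughly equal mass. Weak specification (gluing with gaps $\le p$) should give $\nu(Q)\lesssim |Q|^\gamma$ for every approximate cube $Q$ whenever the "maximal entropy of each projection is achieved jointly". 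Equality $\dimH=\dimB$ is exactly this condition. In general, though, I would prove $\HH^\gamma>0$ differently, through the mass distribution principle applied to the Kenyon--Peres maximizing measure. The log-ratio $\log\mu(Q_N(x))/\log|Q_N(x)|\to\gamma$ almost surely, but mass distribution at the critical exponent needs $\liminf \mu(Q_N)/|Q_N|^\gamma<\infty$ on a positive-measure set, which an LIL-type fluctuation can destroy. So positivity should instead come from a uniform argument: using specification, construct a compact subset $K'\subset K$ coded by concatenations of blocks that realize entropy exactly, with bounded errors, and put a measure on it satisfying $\nu(Q)\le C|Q|^\gamma$.

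For (c), following Peres, the proof would take the measure $\nu$ above, or any finite measure supported on a set $E$ of positive $\HH^\gamma$ measure, and show $\HH^\gamma(E)=\infty$ via a density argument. If $\HH^\gamma|_E$ were $\sigma$-finite, some $E'\subset E$ would have $0<\HH^\gamma(E')<\infty$, and then the upper $\gamma$-density of $\HH^\gamma|_{E'}$ on approximate cubes would be bounded below at a.e. point. This should be contradicted by a local law: for a.e. point with respect to any measure, $\mu(Q_N(x))/|Q_N(x)|^\gamma\to 0$ along a subsequence, and in fact for all large $N$ in the needed sense. When $\dimH<\dimB$ the maximizing measures in different factor layers are incompatible. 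The digit frequencies along the coordinates at scales $\ell_i(N)$ then behave like random walks with nonzero drift mismatch, which forces $\log\mu(Q_N)-\gamma\log|Q_N|\to -\infty$ via a CLT/LIL-type fluctuation on the layered entropy sums. This is Peres's "the fibre counts fluctuate" argument; here it must be done with Markov/specification chains rather than i.i.d. digits.

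The main obstacle is this last step for general weak specification. Peres's full-shift argument uses independence to get an unbounded oscillation of the sum of fibre-size logarithms. Here I would first reduce to measures supported on a sofic-like subsystem built from blocks of length $L$ joined by specification words; this gives an i.i.d. block structure. I would then show that the strict inequality $\dimH<\dimB$ persists as a strict gap between some pair of layer entropies uniformly in $L$, so that the block sums have positive variance and the LIL yields the required divergence for $\HH^\gamma$-a.e. point of any candidate set.
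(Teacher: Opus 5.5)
The equality case is fine: your (b), and the equality half of the theorem, amount to what the paper imports from Z.~Feng. The gap is in step (c), which carries the whole difficulty, and the mechanism you propose there works against you.

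\textbf{Why the LIL mechanism cannot work.} A lower bound for $\HH^\gamma$ needs a measure whose \emph{upper} density $\limsup_k\mu(Q_k(x))/|Q_k(x)|^\gamma$ is small.

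For an ergodic invariant $\mu$, the Kenyon--Peres Ledrappier--Young formula and the variational principle give, almost surely,
$\log\mu(Q_k(x))-\gamma\log|Q_k(x)|=k\bigl(P^{\mathbf a}(\sigma)-\sum_ia_ih(\tau_i\mu)\bigr)+o(k)$.
So no stationary measure drifts towards $-\infty$. At a maximizer the drift vanishes, and what remains is a centred fluctuation. Whenever an LIL holds, that fluctuation goes to $+\infty$ as well as to $-\infty$.

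Concretely, take the McMullen measure of a planar carpet with $\Lambda=\diag(n,m)$ and $\theta=\log m/\log n$. One finds $\log\bigl(\mu(Q_k)/|Q_k|^\gamma\bigr)=\theta W_k-W_{\lfloor\theta k\rfloor}+O(1)$, where $W$ is a centred random walk whose variance is positive exactly when $\dimH K<\dimB K$. Hence $\limsup_k\mu(Q_k)/|Q_k|^\gamma=\infty$ almost surely. Positive variance defeats the argument rather than driving it. Peres' full-shift proof does not work this way either: there, slowly varying digit distributions create a deterministic gain, and limit theorems only show that the fluctuations are smaller than that gain.

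Three further problems with (c):
\begin{itemize}
\item A local law valid for \emph{every} measure cannot come out of a limit theorem for one measure.
\item No such law is needed. A single $\lambda$ with $\limsup_k\lambda(Q_k(z))/g(n_s^{-k})=0$ almost everywhere already gives $\HH^g(K)=\infty$ and non-$\sigma$-finiteness, by Lemma~\ref{lem:density} applied to $\lambda$ and its normalised restrictions.
\item A subsystem built from blocks of one fixed length $L$ generally has dimension strictly below $\gamma$, hence zero $\HH^\gamma$-measure. If instead the block lengths grow, the accumulated entropy losses at scale $k$ are $O(\sqrt k\log k)$ in the paper's bookkeeping. That already exceeds the $\sqrt{k\log\log k}$ size of LIL fluctuations.
\end{itemize}

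\textbf{What is missing.} You need a genuinely non-stationary measure with a deterministic gain at every scale, together with the reason such a gain exists.

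The paper's construction has these steps:
\begin{itemize}
\item It takes $\eta=\eta_{\mathbf a}$ and the $\mathbf a(\beta)$-weighted equilibrium state $\xi$, where $a_i(\beta)=\theta_i^\beta-\theta_{i-1}^\beta$.
\item It partitions $\cL_N(X)$ into $O(N^{s-1})$ classes with uniform fibres at every level (Lemma~\ref{lem:word-partition}). Weighted combinations of these classes reproduce $Nh(\tau_i\omega)-O(\log N)$ in all factors simultaneously.
\item It concatenates such blocks of lengths $2^j$, joined by specification words, so that the proportion of $\xi$-digits near position $n$ is $(n+1)^{\beta-1}$.
\item The $i$-th factor of $Q_k$ is read on the window $(\lfloor\theta_{i-1}k\rfloor,\lfloor\theta_ik\rfloor]$. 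So every cube gains $\frac{k^\beta}{\beta}\sum_i\Delta_i(\theta_i^\beta-\theta_{i-1}^\beta)=\frac{k^\beta}{\beta}c(\beta)$.
\item This gives $\lambda(Q_k(z))\le\exp\{-kP^{\mathbf a}(\sigma)-c_0k^\beta\}$ uniformly in $z$ (Proposition~\ref{thm:measure-construction}). It beats the $O(\sqrt k\log k)$ losses precisely because $\beta>1/2$.
\end{itemize}

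The second ingredient is that the dimension gap forces $c(\beta)>0$ for some $\beta\in(1/2,1)$ (Proposition~\ref{thm:pressure-positive}). The argument:
\begin{itemize}
\item If $c$ vanished on an interval, affine independence of the $\mathbf a(\beta_j)$ would make $\eta$ the equilibrium state for an open set of weights.
\item Comparing D.-J.~Feng's Gibbs estimates at two values of each ratio coordinate $\alpha_i$ then forces the functions $\phi_{\mathbf b}^{(i)}$, and hence all fibre counts, to be uniformly comparable on words of equal length.
\item By Z.~Feng's criterion this means $\dimH K=\dimB K$, a contradiction.
\end{itemize}
Your ``strict gap between layer entropies, uniformly in $L$'' does not capture this.

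\textbf{Two smaller points.}
\begin{itemize}
\item Step (a) is unnecessary as a separate step. Positivity comes from Z.~Feng under equality and from infiniteness under a gap. The ``bounded errors'' it relies on are not available from block constructions anyway.
\item Your box-dimension formula has the projections reversed. The factors must be the projections onto the coordinates after $d_{i-1}$, with $h_{\mathrm{top}}(X)$ weighted by $1/\log n_1$. Projecting onto the first $d_i$ coordinates reverses the weights. For $\Lambda=\diag(3,2)$ with digits $(0,0),(1,0),(2,0)$, it would assign dimension $\log3/\log2>1$ to the unit segment.
\end{itemize}
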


Every primitive subshift of finite type satisfies weak specification, so
Theorem~\ref{thm:main} resolves Peres' conjecture. Under weak specification, Z. Feng proved
the existence of the box dimension and that dimension equality implies positive and finite
critical Hausdorff measure \cite{Feng2026}. Our contribution is the converse measure statement: a strict dimension gap
implies that the critical Hausdorff measure is infinite and, in fact, not $\sigma$-finite.
In particular, the critical Hausdorff measure can never vanish under weak specification.
\smallskip

The dimension theory surrounding this problem developed in several directions. In the
plane, the constructions of Lalley--Gatzouras and Bara\'nski allow more general rectangular
patterns and contraction ratios than the Bedford--McMullen construction
\cite{GatzourasLalley1992,Baranski2007}. For higher-dimensional Bedford--McMullen sets,
Kenyon and Peres established the Hausdorff-dimension variational principle and studied
measures of full dimension \cite{KenyonPeres1996,KenyonPeres1996a}. In the present setting,
approximate cubes encode different coordinate groups over different proportions of symbolic
time, so the Hausdorff-dimension variational principle involves a weighted sum of factor
entropies. D.-J. Feng's weighted thermodynamic formalism provides the corresponding
equilibrium states and entropy estimates \cite{Feng2011}.

Several finer aspects of diagonal self-affine geometry have also been studied, including
dimension gaps for sponges, Assouad-type dimensions, $L^q$ spectra, random sponges,
and dimension formulas for diagonal self-affine sets and measures
\cite{DasSimmons2017,FraserHowroyd2017,Kolossvary2023,BarralBrunet2025,Feng2025,
Rapaport2026}. The critical Hausdorff measure is a different refinement: it asks for the
size of a set at its already known Hausdorff dimension. For Bara\'nski carpets, the
authors obtained a dichotomy in terms of uniform fibre conditions \cite{QiuWang2024}.
Related recent work on Hausdorff measure and finer geometry for planar self-affine sets
includes \cite{Barany2026,BaranyKaenmakiYu2026}.
\medskip

To understand how the conclusions in Theorem~\ref{thm:main} can change when weak specification fails, we next
consider sofic subshifts, which contain all subshifts of finite type. A sofic subshift
$X\subset\cA^{\NN}$ can be represented by a finite
vertex-labelled directed graph and is therefore closely connected with graph-directed
constructions; compare \cite{MauldinWilliams1988}. It is \emph{irreducible} if, for every
$I,J\in\cL(X)$, there is a word $W\in\cL(X)$ such that $IWJ\in\cL(X)$. An irreducible
sofic subshift has a finite strongly connected graph presentation. For each ordered
pair $(u,v)$ of vertices, choose a directed path of positive length from $u$ to $v$. Since
there are only finitely many such pairs, the lengths of these paths
have a finite maximum; denote it by $p$. Given
$I,J\in\cL(X)$, choose paths whose labels are $I$ and $J$. If $u$ is the terminal vertex of
the first path and $v$ is the initial vertex of the second, let $W$ be the word formed by the
labels of the vertices between $u$ and $v$ on the chosen path. Then $|W|\le p$ and
$IWJ\in\cL(X)$. Thus every irreducible sofic subshift satisfies weak specification and is covered by
Theorem~\ref{thm:main}. We therefore
focus on reducible sofic subshifts, which need not satisfy weak specification.

\begin{theorem}\label{thm:reducible}
For every sofic subshift $X$, the box dimension of $K=R(X)$ exists. Write
$\gamma=\dimH K$. Moreover:
\begin{enumerate}[label=\textup{(\roman*)}]
  \item If $\dimH K=\dimB K$, then $\HH^\gamma(K)>0$ and $\HH^\gamma$ is
  $\sigma$-finite on $K$, but $\HH^\gamma(K)$ need not be finite.
  \item If $\dimH K<\dimB K$, then $\HH^\gamma(K)$ may be positive and finite, or it may
  be infinite and $\sigma$-finite.
\end{enumerate}
\end{theorem}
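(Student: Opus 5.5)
The plan is to reduce a general sofic subshift to finitely many irreducible pieces, and then to apply Theorem~\ref{thm:main} together with Feng's results \cite{Feng2026} on each piece.

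\textbf{Decomposition.} Take a right-resolving presentation of $X$ by a finite vertex-labelled graph $G$. Let $C_1,\dots,C_k$ be the strongly connected components of $G$; these are partially ordered by reachability. Every infinite path in $G$ eventually stays in a single component. Hence
\[
K=\bigcup_{j}\ \bigcup_{I}\ f_I\bigl(K_{j}\bigr),
\]
where $K_j=R(X_j)$ and $X_j$ is the irreducible sofic subshift carried by $C_j$. The index $I$ ranges over the countably many finite labels of transient prefixes that end at an entrance vertex of $C_j$, and $f_I(y)=\sum_{k\le|I|}\Lambda^{-k}I_k+\Lambda^{-|I|}y$. Each $X_j$ is irreducible, hence has weak specification, so Theorem~\ref{thm:main} and \cite{Feng2026} apply to every $K_j$. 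In particular $\dimB K_j$ exists.

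\textbf{Box dimension.} Because each $f_I$ is a bi-Lipschitz affine map, $\dimH K=\max_j\dimH K_j$. For the box dimension, I will count the approximate cubes of level $n$ that meet $K$. A path through the component DAG passes through at most $k$ components. So the number of level-$n$ cubes is bounded above by a sum, over at most $k$-step chains $C_{j_1}\to\cdots\to C_{j_\ell}$ and time splittings $n_1+\dots+n_\ell=n$, of products of the counts for the pieces. I must check that the count factors this way for approximate cubes. This is plausible because the cubes have a product structure across coordinate groups, where group $i$ is read up to time $\lfloor n\log n_s/\log n_i\rfloor$, and concatenating paths concatenates these fibre counts.

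There are only polynomially many splittings, and each piece grows like $e^{\beta_j n_j\log n_1+o(n)}$. The lower bound comes from a single component, so
\[
\dimB K=\max_j\dimB K_j .
\]
The main obstacle is exactly here. Approximate cubes at scale $n$ mix several symbolic times, so a cube can straddle a transition between components. Then the count for a splitting is not simply a product, because different coordinate groups see different components. I would handle this by bounding each group's fibre count separately using sub-multiplicativity of fibre-word counts (pressure of the projected sofic factors). Then I would show that the exponent of a mixed configuration is a convex combination of per-component box exponents, and is therefore dominated by the maximum.

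\textbf{(i).} Suppose $\dimH K=\dimB K=\gamma$. Choose $j$ with $\dimH K_j=\gamma$. Then $\dimB K_j\le\dimB K=\gamma=\dimH K_j$, so $K_j$ has equal dimensions, and $0<\HH^\gamma(K_j)<\infty$ by Theorem~\ref{thm:main}. This gives $\HH^\gamma(K)\ge\HH^\gamma(f_I K_j)>0$.

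For $\sigma$-finiteness, the pieces $K_j$ with $\dimH K_j<\gamma$ have $\HH^\gamma(K_j)=0$. The pieces with $\dimH K_j=\gamma$ have finite measure by the argument just given. Since $K$ is a countable union of affine images of these, $\HH^\gamma$ is $\sigma$-finite on $K$.

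To show that the measure need not be finite, I will build a two-component example. Let $C_1\to C_2$, with both components full shifts on digit sets that have uniform fibres and the same dimension $\gamma$. The transient words give a countable union of images $f_I(K_2)$, and their number at length $n$ grows like $e^{n\gamma\log n_1}$. So $\sum_I\HH^\gamma(f_IK_2)$ diverges, and a density argument shows that these images essentially do not overlap. Hence $\HH^\gamma(K)=\infty$.

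\textbf{(ii).} Both cases are established by explicit constructions.

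For the finite positive case, combine a component $K_1$ with uniform fibres, of dimension $\gamma$ and finite measure, with a component $K_2$ that has smaller Hausdorff dimension but larger box dimension, entered after finitely many steps. Then $\dimB K>\gamma$, while the measure stays finite because $\HH^\gamma(K_2)=0$.

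For the infinite but $\sigma$-finite case, use the same device with the construction from (i) providing infinitely many copies of $K_1$, and append a $K_2$ of smaller Hausdorff dimension and larger box dimension.
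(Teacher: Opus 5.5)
\textbf{Genuine gap: existence of the box dimension.} Your proof of positivity and $\sigma$-finiteness in (i) is the paper's Lemma~\ref{lem:reducible-equality}. In (ii), using $\dimB K\ge\dimB K_{C_2}>\gamma$ to force a gap is legitimate, although you still need to exhibit the digit sets. The problem is the box dimension. The formula you aim for, $\dimB K=\max_j\dimB K_{C_j}$, is false. So your upper bound fails, and nothing shows that the lower and upper box dimensions agree.

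Your proposed remedy is exactly the step that breaks: that a mixed configuration has exponent equal to a convex combination of single-component exponents. At level $k$ the $i$-th coordinate group is read only on the window $(\lfloor\theta_{i-1}k\rfloor,\lfloor\theta_ik\rfloor]$. Different windows may be filled by different components, as long as these occur in order along the graph. The resulting exponent is $\sum_ia_ih_i(D_i)$, where $(D_1,\ldots,D_s)$ is a component chain and $h_i(D)=h_{\mathrm{top}}(\tau_i(X_D))$. By contrast, $\log n_s\cdot\dimB K_C=\sum_ia_ih_i(C)$ only sees constant chains. Letting each coordinate group choose its own component is not a convex combination of these numbers, and it can strictly exceed all of them.

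Example~\ref{ex:infinite-gap-critical-measure} shows this. There $\Lambda=\diag(16,4)$ and both components have box dimension $1/2$. Yet the words in $\cS_A^{k/2}\cS_B^{k/2}$ produce $4^{k/2}2^{k/2}=2^{3k/2}$ level-$k$ cubes of size $4^{-k}$, so $\dimB K=3/4$. For the same reason, your heuristic that a piece contributes $e^{\beta_jn_j\log n_1+o(n)}$ is not meaningful. What a component contributes depends on how its time interval meets each window, not on its box dimension.

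\textbf{What is needed.} You need the content of Proposition~\ref{prop:reducible-box}: $\frac{1}{k}\log\#\calD_k(X)$ converges to the maximum of $\sum_ia_ih_i(D_i)$ over component chains.

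For the upper bound:
\begin{enumerate}[label=\textup{(\arabic*)}]
  \item Lift each word to a vertex path.
  \item Split the path into at most $M$ component segments and at most $\#V$ transient vertices.
  \item Record the polynomially many types: the components, the transient words, and the number of positions each segment occupies in each window.
  \item Bound the count in window $i$ by $e^{m_ih_i(D_i)+\varepsilon k}$, where $D_i$ maximises $h_i$ among the components met in that window. These $D_i$ automatically form a chain.
\end{enumerate}

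For the lower bound you must realise every chain, including non-constant ones:
\begin{enumerate}[label=\textup{(\arabic*)}]
  \item Use a pigeonhole argument to get $e^{N(h_i(D_i)-\varepsilon)}$ words of $\tau_i(X_{D_i})$ whose lifts have fixed endpoints.
  \item Join consecutive blocks by fixed paths of bounded length.
  \item Fix a bounded prefix in each block to absorb the misalignment with the windows.
\end{enumerate}

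\textbf{Consequence for your example in (i).} The same oversight invalidates your check of $\dimH K=\dimB K$, which must be tested against the mixed chain $(C_1,C_2)$. Take $d=2$ and full-shift components with uniform fibres and equal dimension. If $C_2$ has more rows than $C_1$, the chain $(C_1,C_2)$ already gives $\dimB K>\gamma$. If $C_2$ has fewer rows, then it has more digits, the layer masses $\sum_{|I|=n}\HH^\gamma(f_IK_{C_2})$ decay geometrically, and $\HH^\gamma(K)$ is finite. Your count $e^{n\gamma\log n_1}$ and your similarity scaling hold only in balanced configurations. One such configuration is when both pieces lie on lines parallel to the first coordinate axis, as in Example~\ref{ex:reducible-equality-infinite}. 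You must impose such a configuration explicitly and verify the mixed chain directly.
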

\begin{remark}
For a sofic subshift, Proposition~\ref{prop:reducible-box} not only proves
that the box dimension exists, but also gives its formula in terms of the topological
entropies of the strongly connected components and their factor subshifts.
\end{remark}

Together, these results compare the weak-specification setting with the reducible
sofic setting. Under weak specification, Theorem~\ref{thm:main} gives a complete
dichotomy for the critical Hausdorff measure according to whether the dimensions are equal. For
sofic subshifts, dimension equality implies positivity and
$\sigma$-finiteness but not finiteness. When the dimensions differ, both finite and infinite
critical Hausdorff measure can occur.

\smallskip
\noindent\textbf{Structure of the manuscript.}
Section~\ref{sec:preliminaries} introduces the notation and reduces the second implication in
Theorem~\ref{thm:main} to
Propositions~\ref{thm:pressure-to-measure} and~\ref{thm:pressure-positive}.
Section~\ref{sec:cubes} defines approximate cubes and proves the required density estimates.
Section~\ref{sec:word-partitions} proves the partition lemma for finite admissible word sets.
Section~\ref{sec:measure-construction} uses this partition to construct a probability measure
and prove Proposition~\ref{thm:pressure-to-measure}. Section~\ref{sec:pressure-inequality} proves
Proposition~\ref{thm:pressure-positive}. Finally, Section~\ref{sec:reducible} proves
Theorem~\ref{thm:reducible} and examines the dimensions and critical Hausdorff measure of
reducible sofic subshifts.

\section{Preliminaries and reduction of the main theorem}\label{sec:preliminaries}

Throughout, $\log$ denotes the natural logarithm. For $x\in\RR$,
$\lfloor x\rfloor$ is the greatest integer less than or equal to $x$, and $\lceil x\rceil$
is the least integer greater than or equal to $x$.
For positive quantities $A$ and $B$, the notation $A\lesssim B$ means that
$A\le CB$ for a constant $C>0$, $A\gtrsim B$ means that $B\lesssim A$, and
$A\asymp B$ means that both $A\lesssim B$ and $B\lesssim A$.
For a real quantity $F$ and a positive quantity $B$, the notation $F=O(B)$ means
$|F|\lesssim B$, while $F=o(B)$ means that $F/B\to0$ in the stated limit. A subscript on
$\lesssim$, $\gtrsim$, $\asymp$, or $O$ lists the parameters on which the corresponding constant may
depend.
For a finite set $E$, the symbol $\#E$ denotes its cardinality.

\subsection{Factor maps and weighted pressure}

Throughout Sections~\ref{sec:preliminaries}--\ref{sec:pressure-inequality},
$X\subset\cA^{\NN}$ denotes a subshift satisfying weak specification, $K=R(X)$, and
$\cS=\{x_1:x\in X\}$ is the set of digits that occur in $X$. For
$x=(x_j)_{j\ge1}$, write $x|N=x_1\cdots x_N$. If $I=i_1\cdots i_N$ and
$0\le a\le b\le N$, write $I|_a^b=i_{a+1}\cdots i_b$, abbreviate
$I|b=I|_0^b$, and write $|I|=N$. We use the convention $I|_a^a=\varnothing$.
If $Y$ is a subshift over a finite alphabet,
let $\cL_0(Y)=\{\varnothing\}$ and
\[
  \cL_N(Y)=\{y_1\cdots y_N:y=(y_j)_{j\ge1}\in Y\}\quad(N\ge1),
  \qquad
  \cL(Y)=\bigcup_{N\ge0}\cL_N(Y).
\]
For $N\ge1$ and $I\in\cL_N(Y)$, the cylinder generated by $I$ is
$[I]=\{y\in Y:y|N=I\}$.

We write $\cM_\sigma(Y)$ for the set of invariant Borel probability measures on $Y$. For a
Borel probability measure $\nu$ on $Y$, set
\begin{equation}\label{eq:block-entropy-definition}
  H(\nu|\cL_N(Y))=-\sum_{I\in\cL_N(Y)}\nu[I]\log\nu[I],
\end{equation}
where $0\log0=0$. For $\mu\in\cM_\sigma(Y)$, its measure-theoretic entropy is
\[
  h(\mu)=\lim_{N\to\infty}\frac1N H(\mu|\cL_N(Y)),
\]
whereas the topological entropy of the whole subshift is
\[
  h_{\mathrm{top}}(Y)=\lim_{N\to\infty}\frac1N\log\#\cL_N(Y).
\]
Both limits exist by subadditivity and Fekete's lemma, since
$H(\mu|\cL_{N+M}(Y))\le H(\mu|\cL_N(Y))+H(\mu|\cL_M(Y))$ and
$\#\cL_{N+M}(Y)\le\#\cL_N(Y)\,\#\cL_M(Y)$.
In
particular, $h(\mu)\le h_{\mathrm{top}}(Y)$.
\smallskip

Recall that there are integers $n_1>\cdots>n_s\ge2$, where $s\ge2$, and
$0=d_0<d_1<\cdots<d_s=d$ such that
\[
  m_r=n_i\qquad d_{i-1}<r\le d_i.
\]
Set $n_0=\infty$, $\cA_1=\cA$, and
\[
  \cA_i=\prod_{r>d_{i-1}}\{0,\ldots,m_r-1\}
  \qquad 2\le i\le s.
\]
Let $\pi_1:\cA\to\cA_1$ be the identity.  For $2\le i\le s$, let
$\pi_i:\cA_{i-1}\to\cA_i$ delete the coordinates numbered at most $d_{i-1}$.
The maps are applied coordinatewise to sequences and words.  Write
\begin{equation}\label{eq:factor-chain}
  \tau_i=\pi_i\circ\cdots\circ\pi_1,
  \qquad X_i=\tau_i(X),
  \qquad
  X=X_1\xrightarrow{\pi_2}X_2\xrightarrow{\pi_3}\cdots
  \xrightarrow{\pi_s}X_s.
\end{equation}
We call the sequence of factor maps in \eqref{eq:factor-chain} the \emph{factor chain}.

If $X$ satisfies weak specification with bound $p$, then every factor $X_i$ satisfies
weak specification with the same bound. To see this, let
$U,V\in\cL(X_i)$, choose $I,J\in\cL(X)$ such that $\tau_i(I)=U$ and $\tau_i(J)=V$.
There is $W\in\cL(X)$ such that $|W|\le p$ and $IWJ\in\cL(X)$. Hence
\[
  U\,\tau_i(W)\,V=\tau_i(IWJ)\in\cL(X_i),
  \qquad |\tau_i(W)|=|W|\le p.
\]

If $\mu$ is a Borel probability measure on $X$, its
push-forward under $\tau_i$ is the probability measure $\tau_i\mu=(\tau_i)_*\mu$ on $X_i$
given by
\[
  (\tau_i\mu)(B)=\mu(\tau_i^{-1}(B))
  \qquad\text{for every Borel set }B\subset X_i.
\]
If $\mu$ is invariant under $\sigma$, then $\tau_i\mu$ is also invariant, as
$\tau_i\circ\sigma=\sigma\circ\tau_i$.

Set
\begin{equation}\label{eq:theta-a}
  \theta_0=0,
  \qquad \theta_i=\frac{\log n_s}{\log n_i},
  \qquad a_i=\theta_i-\theta_{i-1},
  \qquad \mathbf a=(a_1,\ldots,a_s).
\end{equation}
Then $0<\theta_1<\cdots<\theta_s=1$.

If
$\mathbf b=(b_1,\ldots,b_s)$, where $b_i>0$ and $\sum_i b_i=1$, write
\begin{equation}\label{eq:weighted-pressure}
  P^{\mathbf b}(\sigma)
  =\sup_{\mu\in\cM_\sigma(X)}
  \sum_{i=1}^s b_i h(\tau_i\mu).
\end{equation}
$P^{\mathbf b}(\sigma)$ is called the
\textit{$\mathbf b$-weighted pressure} of $\sigma$. In the notation of D.-J. Feng \cite{Feng2011},
this is $P^{\mathbf b}(\sigma,0)$; we omit the zero potential from the notation. An
invariant measure attaining the supremum in \eqref{eq:weighted-pressure} is called a
\textit{$\mathbf b$-weighted equilibrium state} \cite{Feng2011}.

We shall use the following results from Kenyon and Peres \cite{KenyonPeres1996a},
D.-J. Feng \cite{Feng2011}, and Z. Feng \cite{Feng2026}.

\begin{proposition}\label{prop:known-results}
Let $X\subset\cA^{\NN}$ be a subshift satisfying weak specification and set $K=R(X)$.
Then:
\begin{enumerate}[label=\textup{(\roman*)}]
  \item
  \begin{equation}\label{eq:dimension-formulas}
    \dimH K=\frac{P^{\mathbf a}(\sigma)}{\log n_s},
    \qquad
    \dimB K=\frac{1}{\log n_s}
      \sum_{i=1}^s a_i h_{\mathrm{top}}(X_i).
  \end{equation}
  \item Every positive weight vector $\mathbf b$ has a unique
  $\mathbf b$-weighted equilibrium state $\eta_{\mathbf b}$.
  \item For each fixed $\mathbf b$ and $1\le i\le s$,
  \begin{equation}\label{eq:block-entropy}
    H\bigl((\tau_i\eta_{\mathbf b})|\cL_N(X_i)\bigr)
     =Nh(\tau_i\eta_{\mathbf b})+O_{\mathbf b}(1).
  \end{equation}
  \item If $\gamma=\dimH K$ and $\dimH K=\dimB K$, then
  \begin{equation}\label{eq:feng-finite-critical-measure}
    0<\HH^\gamma(K)<\infty.
  \end{equation}
\end{enumerate}
\end{proposition}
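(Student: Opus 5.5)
Since Proposition~\ref{prop:known-results} collects results from the literature, I would not reprove them. The plan is to check that the hypotheses of the cited theorems are met in our setting, namely a subshift $X$ with weak specification, the diagonal matrix $\Lambda$ with $s\ge2$ distinct entries, and the factor chain \eqref{eq:factor-chain}. I would then translate the conclusions into the notation $\theta_i$, $a_i$, $P^{\mathbf b}(\sigma)$ used here.

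For (i), the Hausdorff dimension formula is the weighted variational principle. The argument goes as follows.
\begin{itemize}
\item Cover $K$ by approximate cubes of side $\approx n_s^{-N}$. Such a cube is determined by the $\tau_i$-images of $x$ on the time window $(\theta_{i-1}N,\theta_iN]$.
\item The mass distribution principle, applied to a $\mathbf a$-weighted equilibrium state, gives the lower bound $\dimH K\ge P^{\mathbf a}(\sigma)/\log n_s$.
\item A covering argument with Feng's weighted topological pressure gives the matching upper bound. This is Kenyon--Peres \cite{KenyonPeres1996a} for subshifts of finite type and D.-J. Feng \cite{Feng2011} for general subshifts.
\end{itemize}
For the box dimension, I would count level-$N$ approximate cubes meeting $K$. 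The upper bound $\prod_i\#\cL_{\lceil a_iN\rceil}(X_i)$ comes from the projections. For the lower bound, weak specification with gap $p$ lets us concatenate arbitrary admissible words in each factor window. This loses only a factor of at most $\#\cA^{O(sp)}$ and a shift of $O(sp)$ in the window lengths. Taking logarithms, dividing by $N\log n_s$, and using the limit defining $h_{\mathrm{top}}(X_i)$ gives $\dimB K=\frac1{\log n_s}\sum_i a_ih_{\mathrm{top}}(X_i)$. This is the content of Z. Feng \cite{Feng2026}.

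For (ii) and (iii), I would invoke D.-J. Feng's weighted thermodynamic formalism \cite{Feng2011}. Under weak specification, each factor $X_i$ again has weak specification with the same $p$, as shown above. Feng constructs the $\mathbf b$-weighted equilibrium state $\eta_{\mathbf b}$ as a weighted Gibbs-type measure and proves it is unique. The key estimate I would extract is a quasi-Bernoulli (Gibbs) bound for each $\tau_i\eta_{\mathbf b}$: cylinder masses of concatenated words are comparable, up to constants depending on $\mathbf b$ and $p$, to products of the masses of the pieces. This yields $|H(\tau_i\eta_{\mathbf b}|\cL_{N+M})-H(\tau_i\eta_{\mathbf b}|\cL_N)-H(\tau_i\eta_{\mathbf b}|\cL_M)|\le C$. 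By Fekete's lemma applied to both $H+C$ and $H-C$, this two-sided approximate additivity gives $H=Nh+O_{\mathbf b}(1)$, which is \eqref{eq:block-entropy}.

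The main obstacle is (iii). The gluing words $W$ from weak specification have variable length, so the Gibbs bound must absorb a bounded shift of at most $p$ symbols. I would handle this by comparing $\cL_{N}$ and $\cL_{N+j}$ partitions for $0\le j\le p$, each at entropy cost $\le p\log\#\cA$.

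Finally, (iv) is exactly the finite-measure theorem of Z. Feng \cite{Feng2026}. Heuristically, in the equality case the Gibbs measure $\eta_{\mathbf a}$ gives every approximate cube mass $\asymp$ (diameter)$^\gamma$. The mass distribution principle then gives $\HH^\gamma(K)>0$, and the count of approximate cubes gives $\HH^\gamma(K)<\infty$.
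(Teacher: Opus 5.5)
Parts (i), (ii) and (iv) of your plan agree with the paper: they are citations of Kenyon--Peres, \cite[Theorem~1.2]{Feng2011}, and Z.~Feng \cite{Feng2026}. One attribution should be corrected. The Hausdorff-dimension formula for an arbitrary subshift is already in \cite[Theorem~1.1 and Lemma~4.3]{KenyonPeres1996a}; it is not a result of \cite{Feng2011}. The genuine gap is in your route to (iii).

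\textbf{The gap in (iii).} You propose to extract from \cite{Feng2011} a quasi-Bernoulli (Gibbs) bound for every $\tau_i\eta_{\mathbf b}$. Feng's results do not provide this.

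\emph{Where it works.} A two-sided Gibbs estimate is available only at the bottom level. There $\tau_s\eta_{\mathbf b}$ is the equilibrium state of $(\phi^{(s)}_{\mathbf b})^{\alpha_s}\in\mathcal D_w(X_s,p)$, and \cite[Theorem~5.5]{Feng2011} gives \eqref{eq:coarse-gibbs}. So your argument is fine for $i=s$.

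\emph{Where it fails.} At the top of a chain, \cite[Theorem~7.3(i)]{Feng2011} gives only the one-sided inequality $\widetilde g_{1,\mathbf b,N}(I)\lesssim\eta_{\mathbf b}[I]$; see Proposition~\ref{prop:feng-input}(ii). The paper obtains a two-sided Gibbs bound at a level $j<s$ only under the extra comparability hypothesis of Lemma~\ref{lem:gibbs-upgrade}. For $j=1$, that hypothesis already forces $\dimH K=\dimB K$ via Lemmas~\ref{lem:adjacent-fibres}--\ref{lem:dimension-equality}. A lower bound on cylinder masses gives no upper bound of the form $\nu[IJ]\le C\,\nu[I]\,\nu[J]$. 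That upper inequality is exactly what your Fekete argument needs, since
\[
  H_{N+M}\ge H_N+H_M-\log C
\]
comes only from it. So for $i<s$, and in particular for $i=1$, your key estimate is unsupported.

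\emph{The misplaced obstacle.} The difficulty you single out is not the real one. The inequality $H_{N+M}\le H_N+H_M$ holds for every invariant measure. Hence only the upper quasi-Bernoulli inequality matters, and the variable lengths of the gluing words play no role in (iii).

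\textbf{How the paper handles (iii).} The paper never uses a quasi-Bernoulli property.
\begin{itemize}
  \item For $i=1$, \eqref{eq:block-entropy} is literally \cite[Theorem~7.3(iii)]{Feng2011}.
  \item For $2\le i<s$, apply \cite[Lemma~7.2(ii)]{Feng2011} successively to $\pi_2,\ldots,\pi_i$. This identifies $\tau_i\eta_{\mathbf b}$ as the weighted equilibrium state of an induced, generally nonzero, potential on the shorter chain $X_i\to\cdots\to X_s$, with weight vector $\bigl(\sum_{\ell\le i}b_\ell,b_{i+1},\ldots,b_s\bigr)$. \cite[Lemma~5.7]{Feng2011} places the induced word functions in the admissible class, so Theorem~7.3(iii) applies again.
  \item For $i=s$, \cite[Theorem~5.5]{Feng2011} applies.
\end{itemize}
The missing idea in your proposal is this reduction of an intermediate factor level to the top level of a shorter chain with an induced potential. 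With it, (iii) becomes a citation rather than a consequence of an unavailable quasi-Bernoulli bound.
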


\begin{proof}
The Ledrappier--Young formula and the full-dimension variational principle in
\cite[Theorem~1.1 and Lemma~4.3]{KenyonPeres1996a} give
\[
  \dimH K
  =\sup_{\mu\in\cM_\sigma(X)}
    \sum_{i=1}^s
    \left(\frac{1}{\log n_i}-\frac{1}{\log n_{i-1}}\right)
    h(\tau_i\mu)
  =\frac{1}{\log n_s}
    \sup_{\mu\in\cM_\sigma(X)}\sum_{i=1}^s a_i h(\tau_i\mu)
  =\frac{P^{\mathbf a}(\sigma)}{\log n_s}.
\]
Since $X$ satisfies weak specification, \cite[Proposition~3.3]{Feng2026} shows that the box
dimension of $K$ exists and gives
\[
  \dimB K
  =\frac{1}{\log n_s}\sum_{i=1}^s a_i h_{\mathrm{top}}(X_i).
\]
This proves \textup{(i)}.

Conclusion \textup{(ii)} follows directly from \cite[Theorem~1.2]{Feng2011}, applied to the
zero potential.

We next prove \textup{(iii)}. For $i=1$, equation \eqref{eq:block-entropy} is precisely the
entropy estimate in \cite[Theorem~7.3(iii)]{Feng2011}. Let $2\le i\le s$. The factor $X_i$
satisfies weak specification.
Applying \cite[Lemma~7.2(ii)]{Feng2011} successively to $\pi_2,\ldots,\pi_i$ shows that
$\tau_i\eta_{\mathbf b}$ is the weighted equilibrium state of the induced potential on
the factor chain $X_i\to\cdots\to X_s$.
By \cite[Lemma~5.7]{Feng2011}, each recursively induced word function remains in
the class required by \cite[Theorems~5.5 and 7.3]{Feng2011}.
This induced potential need not be zero. If $i<s$, its weight vector is
$\left(\sum_{\ell=1}^i b_\ell,b_{i+1},\ldots,b_s\right)$, so
\cite[Theorem~7.3(iii)]{Feng2011} applies. For $i=s$, \cite[Theorem~5.5]{Feng2011} applies. In either case,
\[
  \sum_{J\in\cL_N(X_i)}(\tau_i\eta_{\mathbf b})[J]
       \log(\tau_i\eta_{\mathbf b})[J]
   =-Nh(\tau_i\eta_{\mathbf b})+O_{\mathbf b}(1).
\]
Multiplying by $-1$ and using \eqref{eq:block-entropy-definition} gives
\eqref{eq:block-entropy}.

Finally, under the assumption $\dimH K=\dimB K$, conclusion \textup{(i)} of
\cite[Theorem~1.1]{Feng2026} gives
$
  0<\HH^{\dimH K}(K)<\infty.
$ Taking $\gamma=\dimH K$ proves \textup{(iv)}.
\end{proof}

\subsection{A reduction of Theorem~\ref{thm:main}}

Sections~\ref{sec:cubes}--\ref{sec:pressure-inequality} prove the second implication in
Theorem~\ref{thm:main}. For $0<\beta\le1$, write
\begin{equation}\label{eq:power-weight}
  a_i(\beta)=\theta_i^\beta-\theta_{i-1}^\beta,
  \qquad
  \mathbf a(\beta)=(a_1(\beta),\ldots,a_s(\beta)).
\end{equation}
Note that $\mathbf a(1)=\mathbf a$, where $\mathbf a$ is defined in
\eqref{eq:theta-a}.
Since $0=\theta_0<\theta_1<\cdots<\theta_s=1$,
\[
  a_i(\beta)>0,
  \qquad
  \sum_{i=1}^s a_i(\beta)=1.
\]
Thus $\mathbf a(\beta)$ is a positive weight vector.
Let $\eta=\eta_{\mathbf a}$, and write
\begin{equation}\label{eq:pressure-gap}
  c(\beta)=P^{\mathbf a(\beta)}(\sigma)
  -\sum_{i=1}^s a_i(\beta)h(\tau_i\eta)\ge0.
\end{equation}
The last inequality follows directly from the variational definition of
$P^{\mathbf a(\beta)}(\sigma)$.

Given a continuous increasing function $\phi:[0,\infty)\to[0,\infty)$ with $\phi(0)=0$,
called a \emph{gauge function}, the associated Hausdorff measure is
\[
  \HH^\phi(E)=\lim_{\delta\downarrow0}
  \inf\left\{\sum_j \phi(\diam U_j):
  E\subset\bigcup_jU_j,\ \diam U_j\le\delta\right\}.
\]
For $t>0$, taking $\phi(r)=r^t$ gives the usual Hausdorff measure $\HH^t$.

We now reduce the second implication in Theorem~\ref{thm:main} to two propositions. The first
shows that $c(\beta)>0$ gives the required Hausdorff-measure conclusion, and the second shows
that the dimension gap $\dimH K<\dimB K$ gives $c(\beta)>0$. Their proofs are given in
Sections~\ref{sec:measure-construction} and~\ref{sec:pressure-inequality}, respectively.
Together with Proposition~\ref{prop:known-results}\textup{(iv)}, these two propositions prove
Theorem~\ref{thm:main}.

\begin{proposition}\label{thm:pressure-to-measure}
Let $X\subset\cA^{\NN}$ be a subshift satisfying weak specification, set $K=R(X)$, and let
$\gamma=\dimH K$.
If $c(\beta)>0$ for some $\beta\in(1/2,1)$, then there is $\varepsilon>0$ such that the
gauge function $g_\beta$, satisfying
$g_\beta(r)=r^\gamma\exp\{-\varepsilon(\log(1/r))^\beta\}$ for $0<r\le1$, has
\begin{equation*}
  \HH^{g_\beta}(K)=\infty,
  \qquad
  \HH^\gamma(K)=\infty.
\end{equation*}
Neither $\HH^{g_\beta}$ nor $\HH^\gamma$ is $\sigma$-finite on $K$.
\end{proposition}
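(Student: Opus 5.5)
The plan is to adapt Peres' argument to the weighted setting: build a probability measure $\mu$ on $K$ whose lower local density with respect to $g_\beta$ vanishes everywhere, and then apply a mass distribution (density) lemma. Since $g_\beta(r)/r^\gamma\to0$ as $r\to0$, we have $\HH^{g_\beta}\le C\HH^\gamma$ on small scales. So $\HH^{g_\beta}(K)=\infty$ gives $\HH^\gamma(K)=\infty$, and it suffices to work with $g_\beta$.

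\emph{Approximate cubes and density lemma.} For $x\in X$ and $N\ge1$, let $\ell_i(N)=\lfloor\theta_i N\rfloor$. The approximate cube $Q_N(x)$ is the set of $y\in X$ with $\tau_i(y)|\ell_i(N)=\tau_i(x)|\ell_i(N)$ for $i=1,\dots,s$. Its image $R(Q_N(x))$ has diameter $\asymp n_s^{-N}$, and every ball of radius $n_s^{-N}$ meets only boundedly many such images. The standard density lemma (Rogers--Taylor) then says: if $\mu$ is a Borel probability on $X$ with
\[
\liminf_{N\to\infty}\frac{\mu(Q_N(x))}{g_\beta(n_s^{-N})}=0\quad\text{for }\mu\text{-a.e. }x,
\]
then $\HH^{g_\beta}(R(E))=\infty$ for every Borel $E$ with $\mu(E)>0$. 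To get non-$\sigma$-finiteness, I will construct a family of such measures $\mu_\omega$, indexed by uncountably many parameters $\omega$ (the choice of block patterns below), that are mutually singular. If $K=\bigcup_j E_j$ with $\HH^{g_\beta}(E_j)<\infty$, then for each $\omega$ some $E_j$ has $\mu_\omega(E_j)>0$, which contradicts the density lemma. So in fact a single suitable $\mu$ already rules out $\sigma$-finiteness, because every $E_j$ of finite measure must be $\mu$-null.

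\emph{Construction of $\mu$.} Write $\mu(Q_N(x))$ through the telescoping conditional factorisation
\[
\mu(Q_N(x))=\prod_{i}\mu\bigl(\tau_i \text{ block on }(\ell_{i-1}(N),\ell_i(N)]\mid\text{coarser information}\bigr).
\]
For the equilibrium state $\eta=\eta_{\mathbf a}$, Proposition~\ref{prop:known-results}(iii) gives
\[
-\log\eta(Q_N)\approx N\sum_i a_i h(\tau_i\eta)=N\gamma\log n_s,
\]
up to fluctuations of order $\sqrt N$. These fluctuations are too small to help, so $\eta$ alone is insufficient. Instead I will use the pressure gap $c(\beta)>0$ at multiple scales. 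Along a super-exponentially growing sequence of scales $N_k$, I will splice in, on the time windows $[\theta_{i-1}^\beta N_k,\theta_i^\beta N_k]$, long admissible words that are typical for the $\mathbf a(\beta)$-weighted equilibrium state $\eta_{\mathbf a(\beta)}$ rather than for $\eta$. They are joined by weak-specification connectors of length $\le p$, and the partition lemma for finite admissible word sets (Section~\ref{sec:word-partitions}) is used to turn the word counts into a genuine probability measure with controlled conditional masses. The exponent $\beta$ is calibrated so that, at scale $N=N_k^{1/\beta}$-type times, the approximate cube sees these windows exactly with the weights $a_i(\beta)$. This produces a gain factor of $\exp(c(\beta)N^\beta)$ in the number of distinct cubes, hence a loss of $\exp(-cN^\beta)$ in their mass. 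Choosing $\varepsilon<c(\beta)$ then gives
\[
\mu(Q_N(x))\le n_s^{-\gamma N}\exp\{-c' N^\beta\}=o\bigl(g_\beta(n_s^{-N})\bigr)
\]
along those $N$, for $\mu$-a.e. $x$.

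\emph{Main obstacle.} The hardest step is controlling the error terms uniformly. The connector words and the $O(1)$ block-entropy errors from Proposition~\ref{prop:known-results}(iii) are harmless. The real difficulty is the large-deviation fluctuation of $-\log\mu$ around its mean, which is typically of order $\sqrt{N}$ for the $\eta$-part. This is exactly why $\beta>1/2$ is required: it ensures $N^\beta\gg\sqrt N\log N$. I would first try a second-moment (Chebyshev) estimate combined with Borel--Cantelli along the sparse sequence $N_k$. This should show that the deficit $\exp(-c'N_k^\beta)$ survives for $\mu$-a.e. $x$ at infinitely many scales, and that the windows at different $N_k$ do not interfere because the $N_k$ grow fast enough.
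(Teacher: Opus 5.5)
Your reduction from $\HH^\gamma$ to $\HH^{g_\beta}$ and your non-$\sigma$-finiteness argument are fine. The argument breaks at its core, however, because your density lemma points the wrong way.

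\textbf{The density lemma and the sparse scales.} To bound Hausdorff measure from below you need the \emph{upper} density to vanish: $\limsup_{N\to\infty}\mu(Q_N(x))/g_\beta(n_s^{-N})=0$ for $\mu$-a.e.\ $x$. This is the paper's Lemma~\ref{lem:density}. The $\liminf$ version you state is the criterion for infinite \emph{packing} measure, and it is false for $\HH^g$. For a counterexample, take a homogeneous Moran set $E$ with $\dimH E<t<\dim_{\mathrm P}E$, its uniform measure, and $g(r)=r^t$: the lower density is $0$ everywhere, yet $\HH^t(E)=0$. An efficient $\HH^{g_\beta}$-cover may use cubes of any scale. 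So a mass deficit along a sparse sequence $N_k$, obtained by Chebyshev and Borel--Cantelli, proves nothing. You need $\lambda(Q_k(z))\le e^{-kP^{\mathbf a}(\sigma)-c_0k^\beta}$ at \emph{every} large $k$.

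\textbf{Why your splicing scheme fails.} As written, the windows $[\theta_{i-1}^\beta N_k,\theta_i^\beta N_k]$ tile $[0,N_k]$. So at scale $N_k$ every cube window $(\theta_{i-1}N_k,\theta_iN_k]$ is read through $\xi$-typical words, where $\xi=\eta_{\mathbf a(\beta)}$. The cube mass is then about $\exp(-N_k\sum_ia_ih(\tau_i\xi))$. This is exponentially \emph{larger} than $n_s^{-\gamma N_k}$, because $c(\beta)>0$ forces $\xi\ne\eta$ and $\eta$ is the unique $\mathbf a$-equilibrium state. At $N=N_k^{1/\beta}$ with $k$ large, the spliced segment $[0,N^\beta]$ lies inside the first window $(0,\theta_1N]$. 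It is therefore seen only through $\tau_1$, and the gain is $\Delta_1N^\beta$, not $c(\beta)N^\beta$.

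\textbf{The missing idea: a scale-invariant schedule.} The paper inserts $\xi$-blocks with time density $f(n)=(n+1)^{\beta-1}$. Among the first $t$ positions there are then $E_\beta(t)=t^\beta/\beta+O(1)$ $\xi$-positions, up to an $O(t^{1/2})$ tracking error (Lemma~\ref{lem:scheduled-measure}). Hence for \emph{every} $k$ the window $(\lfloor\theta_{i-1}k\rfloor,\lfloor\theta_ik\rfloor]$ contains about $\frac{k^\beta}{\beta}(\theta_i^\beta-\theta_{i-1}^\beta)$ $\xi$-positions. This is exactly where the power weights $\mathbf a(\beta)$ come from. The gain is $\frac{k^\beta}{\beta}\sum_i\Delta_ia_i(\beta)=\frac{c(\beta)}{\beta}k^\beta$. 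Because the $\xi$-portion is sublinear, the $\eta$-baseline $kP^{\mathbf a}(\sigma)$ is untouched.

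\textbf{No fluctuation analysis is needed.} Each source block is uniform on a set with uniform fibres (Lemma~\ref{lem:word-partition}), so its $\tau_i$-image is exactly uniform on $\tau_i(C_r)$. After paying $(p+1)^{r_k}$ for the joining-length patterns, the product bound holds deterministically, for every $z\in K$ and every $k\ge k_0$. The restriction $\beta>1/2$ absorbs block-boundary, joining-word and partition-count errors of order $k^{1/2}\log k$; it is not about CLT fluctuations. If you insist on a random construction with genuine fluctuations, you would need exponential concentration summable over \emph{all} $N$, not a second-moment bound along $N_k$.
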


\begin{proposition}\label{thm:pressure-positive}
Let $X\subset\cA^{\NN}$ be a subshift satisfying weak specification and set $K=R(X)$. If
$\dimH K<\dimB K$, then
$c(\beta)>0$ for some $\beta\in(1/2,1)$.
\end{proposition}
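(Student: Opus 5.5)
We argue by contradiction: assume $c(\beta)=0$ for every $\beta\in(1/2,1)$ and deduce $\dimH K=\dimB K$. The assumption says that $\eta=\eta_{\mathbf a}$, the $\mathbf a$-weighted equilibrium state, attains the supremum defining $P^{\mathbf a(\beta)}(\sigma)$ for each such $\beta$. By the uniqueness in Proposition~\ref{prop:known-results}\textup{(ii)}, $\eta$ is then the unique $\mathbf a(\beta)$-weighted equilibrium state for every $\beta\in(1/2,1)$, and also for $\beta=1$. Our aim is to use this one-parameter family of weights to force $h(\tau_i\eta)=h_{\mathrm{top}}(X_i)$ for every $i$. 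Formula \eqref{eq:dimension-formulas} then gives $\dimH K=\dimB K$.

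\textbf{Step 1: concavity and the simplex.} Define
\[
  F(\mathbf b)=P^{\mathbf b}(\sigma)-\sum_i b_ih(\tau_i\eta)\ge0 .
\]
This is a supremum of affine functions of $\mathbf b$, so $F$ is convex, and it vanishes on the curve $\{\mathbf a(\beta):\beta\in(1/2,1]\}$. A convex nonnegative function vanishes on the convex hull of its zero set, so $F=0$ on the convex hull of this curve.

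\textbf{Step 2: the curve spans an open set.} Show that the curve $\beta\mapsto\mathbf a(\beta)$ is not contained in any proper affine subspace of the simplex $\{\sum b_i=1\}$. Suppose $\sum_i\lambda_i a_i(\beta)$ were constant in $\beta$ with $\lambda$ not a multiple of $(1,\dots,1)$. Summation by parts rewrites this as
\[
  \sum_{i=1}^{s-1}(\lambda_i-\lambda_{i+1})\,\theta_i^\beta+\lambda_s\theta_s^\beta ,
\]
and $\theta_s=1$. The functions $\theta_i^\beta=e^{\beta\log\theta_i}$, $i<s$, have distinct nonzero exponents $\log\theta_i<0$. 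Exponentials with distinct exponents are linearly independent together with the constant function, even on an interval. Hence $\lambda_i=\lambda_{i+1}$ for all $i$, a contradiction. Consequently the convex hull of the curve has nonempty interior in the $(s-1)$-dimensional simplex, and $F$ vanishes on an open set $U$ of weights.

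\textbf{Step 3: deduce maximal factor entropies.} On $U$ the function $P^{\mathbf b}(\sigma)$ equals the affine function $\sum b_ih(\tau_i\eta)$. Because $P^{\mathbf b}$ is convex and positively homogeneous in $\mathbf b$, we extend it to all $\mathbf b\in[0,\infty)^s$; the same formula still makes sense. A convex function that agrees with an affine function on an open set has that affine function as a supporting hyperplane, so
\[
  P^{\mathbf b}(\sigma)\ge\sum_ib_ih(\tau_i\eta)\quad\text{for all nonnegative }\mathbf b .
\]
This is automatic, so we need the reverse comparison, and here I would use the equilibrium states more cleverly. Fix $\mathbf b_0\in U$ and a direction $e_j$. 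For small $t>0$, the weight $\mathbf b_0+te_j$, after renormalisation, stays in $U$. Take any $\mu$ maximizing $h(\tau_j\mu)$, that is, a measure of maximal entropy lifted to $X$. Then
\[
  \sum_i(b_0+te_j)_ih(\tau_i\mu)\le\sum_i(b_0+te_j)_ih(\tau_i\eta),
  \qquad
  \sum_i b_{0,i}h(\tau_i\mu)\le\sum_i b_{0,i}h(\tau_i\eta).
\]
This does not directly isolate the $j$-th entropy. Instead I would use directional derivatives. The right derivative of the convex function $P^{\mathbf b}$ in direction $e_j$ equals $\max h(\tau_j\nu)$ over equilibrium states $\nu$ at $\mathbf b_0$, which by uniqueness is $h(\tau_j\eta)$. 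Meanwhile $P$ is affine near $\mathbf b_0$. Letting the weight degenerate along rays inside the hull (the endpoint $\beta\to1/2$ does not reach the boundary, so this is delicate), I would instead apply the variational characterization with the Feng block-entropy estimate \eqref{eq:block-entropy}. If $h(\tau_j\eta)<h_{\mathrm{top}}(X_j)$ for some $j$, weak specification lets us concatenate $\eta$-typical words for the other levels with maximal-entropy words at level $j$. This builds an invariant measure whose weighted entropy exceeds that of $\eta$ for weights in $U$ that put relatively large mass on $b_j$, which contradicts $F=0$ on $U$.

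\textbf{Main obstacle.} The hardest step is this last one: passing from "$\eta$ is the equilibrium state for an open set of weights" to "$\eta$ maximizes each factor entropy separately". The available weights do not approach the vertices of the simplex, so the argument must exploit the nested structure of the factor chain. My first attempt is induction from $i=s$ downward. Using the linear-independence argument of Step 2, I would differentiate the identity $P^{\mathbf a(\beta)}=\sum_ia_i(\beta)h(\tau_i\eta)$ in $\beta$. Combined with the conditional variational principle on the fibres of $\pi_i$, this should show that each fibre entropy $h(\tau_i\eta)-h(\tau_{i+1}\eta)$ is maximal given the projection, which yields $h(\tau_i\eta)=h_{\mathrm{top}}(X_i)$ for each $i$.
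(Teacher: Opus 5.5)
Your Steps~1 and~2 are sound and match the paper's Lemmas~\ref{lem:affine-span} and~\ref{lem:common-open}; the linear independence of the exponentials $\theta_i^\beta$ is a fine substitute for the derivative--Vandermonde computation. Your target is also the right reformulation: since $h(\tau_i\eta)\le h_{\mathrm{top}}(X_i)$ and $a_i>0$, \eqref{eq:dimension-formulas} shows that $\dimH K=\dimB K$ holds exactly when $h(\tau_i\eta)=h_{\mathrm{top}}(X_i)$ for every $i$.

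\textbf{The gap is Step~3.} This step is the whole content of the proposition, it is not proved, and the tools you propose cannot prove it. Let $\mathcal H=\{(h(\tau_1\mu),\ldots,h(\tau_s\mu)):\mu\in\cM_\sigma(X)\}$. This set is convex, and $\mathbf b\mapsto P^{\mathbf b}(\sigma)$ is its support function. What Steps~1--2 give is that $v_\eta=(h(\tau_i\eta))_i$ is the supporting point for an open cone of directions, i.e.\ a corner of $\mathcal H$. Convex geometry alone does not force such a corner to be the coordinatewise maximum $(h_{\mathrm{top}}(X_i))_i$. Hence the following tools cannot close the gap:
\begin{itemize}
  \item Directional derivatives of $P$ only recover $v_\eta$ as the gradient.
  \item Differentiating the identity in $\beta$ just returns the envelope relation $\sum_i a_i'(\beta)h(\tau_i\eta)$.
  \item ``Maximal fibre entropy given the projection'' gives $h(\tau_i\eta)=h_{\mathrm{top}}(X_i)$ only if the fibres have uniformly comparable sizes, which is exactly what must be shown.
\end{itemize}
The concatenation idea fails for a concrete reason. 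You cannot choose maximal-entropy words at level $j$ and $\eta$-typical words at the other levels independently at the same positions. The level-$j$ word is the image of the finer words and determines the coarser ones, and a lift of a maximal-entropy level-$j$ word may have small fibres beneath it. If instead you alternate time blocks from $\eta$ and from a lifted measure $\mu$ of maximal entropy, you get asymptotically a convex combination $tv_\eta+(1-t)v_\mu\in\mathcal H$. Its $\mathbf b$-weighted value cannot exceed $P^{\mathbf b}(\sigma)$ for $\mathbf b\in U$, so no contradiction arises.

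\textbf{The missing idea} is the quantitative Gibbs structure of the weighted equilibrium states.
\begin{itemize}
  \item Feng's theory gives $(\tau_s\eta_{\mathbf b})[J]\asymp\phi_{\mathbf b}^{(s)}(J)^{\alpha_s(\mathbf b)}/Z_{\mathbf b}(N)$, where $\alpha_i(\mathbf b)=A_{i-1}(\mathbf b)/A_i(\mathbf b)$. The recursive fibre-sum functions $\phi_{\mathbf b}^{(i)}$ depend only on $\alpha_2,\ldots,\alpha_{i-1}$.
  \item Because $U$ is open, you may vary $\alpha_s$ alone. Neither $\Phi_s=\phi^{(s)}$ nor $\tau_s\eta$ changes.
  \item Hence $\Phi_s(J)^u/S_u(N)\asymp\Phi_s(J)^v/S_v(N)$ for some $u\ne v$. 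This forces $\Phi_s$ to be uniformly comparable on words of equal length.
  \item A downward induction gives comparability of every $\phi^{(i)}$. At each level $j$ it uses the upgraded Gibbs estimate, which is valid once the higher-level functions are comparable.
  \item Comparability passes to the fibre counts $f_i$ and $F_i$, and Z.~Feng's uniform-fibre criterion then yields $\dimH K=\dimB K$.
\end{itemize}
This route also reaches your own target directly. Comparability turns the Gibbs estimate into $(\tau_j\eta)[I]\asymp 1/\#\cL_N(X_j)$, and therefore $h(\tau_j\eta)=h_{\mathrm{top}}(X_j)$.
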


\begin{proof}[Proof of Theorem~\ref{thm:main} based on Propositions~\ref{thm:pressure-to-measure} and~\ref{thm:pressure-positive}]
Suppose first that $\dimH K=\dimB K$. Then
\eqref{eq:feng-finite-critical-measure} gives
\[
  0<\HH^\gamma(K)<\infty.
\]
Suppose next that $\dimH K<\dimB K$. Proposition~\ref{thm:pressure-positive} gives
$\beta\in(1/2,1)$ such that $c(\beta)>0$. Proposition~\ref{thm:pressure-to-measure} then gives
\[
  \HH^\gamma(K)=\infty,
\]
and shows that $\HH^\gamma$ is not $\sigma$-finite on $K$.

The general inequality $\dimH K\le\dimB K$ shows that these are the only two possible cases.
The finite-measure conclusion in the first case and the infinite-measure conclusion in the
second case prove both converse implications in
\eqref{eq:finite-measure-characterization} and
\eqref{eq:infinite-measure-characterization}.
\end{proof}

\begin{remark}
	The proof of Proposition~\ref{thm:pressure-positive} gives a stronger conclusion. If
	$\dimH K<\dimB K$, then every nonempty open interval $I\subset(0,1)$ contains some
	$\beta$ for which $c(\beta)>0$. Otherwise, the function $c$ would vanish throughout $I$, and the argument
	in Section~\ref{sec:pressure-inequality} would imply $\dimH K=\dimB K$. Consequently, for
	every $\beta_0\in(1/2,1)$, there is some $\beta\in(\beta_0,1)$ with $c(\beta)>0$.
	Proposition~\ref{thm:pressure-to-measure} then shows that the exponent $\beta$ in the
	gauge function $g_\beta$ may be chosen arbitrarily close to $1$.
\end{remark}

\section{Approximate cubes and the density estimates}\label{sec:cubes}

This section introduces the approximate cubes used throughout the proof. We describe both
their geometric form and their symbolic representation. We then use a density estimate to
deduce infinite and non-$\sigma$-finite Hausdorff measure from sufficiently small
approximate-cube masses.

For a word $I=i_1\cdots i_N$ and $0\le k\le N$, write
\[
  R_k(I)=\sum_{\ell=1}^k\Lambda^{-\ell}i_\ell,
  \qquad R_0(I)=0.
\]
Thus
\[
  R(x)=R_k(x|k)+\Lambda^{-k}R(\sigma^kx).
\]
For $I\in\cA^k$, the \emph{level-$k$ approximate cube} is
\begin{equation*}
  Q_k(I)=\prod_{i=1}^s\prod_{r=d_{i-1}+1}^{d_i}
  \left[
    R_{\lfloor\theta_i k\rfloor}(I)_r,
    R_{\lfloor\theta_i k\rfloor}(I)_r+n_i^{-\lfloor\theta_i k\rfloor}
  \right),
\end{equation*}
with the adjustment that a factor is closed on the right whenever its right endpoint is $1$.
Here $R_{\lfloor\theta_i k\rfloor}(I)_r$ denotes the $r$-th coordinate of
$R_{\lfloor\theta_i k\rfloor}(I)$. Note that the level-$k$ approximate cubes form a
partition of $[0,1]^d$. These partitions are nested:
every level-$(k+1)$ approximate cube is contained in a unique level-$k$ approximate cube.
Every side length of $Q_k(I)$ is comparable to $n_s^{-k}$; hence there are constants
$c_0,C_0>0$, independent of $k$ and $I$, such that
\begin{equation}\label{eq:cube-size}
  c_0n_s^{-k}\le\diam Q_k(I)\le C_0n_s^{-k}.
\end{equation}
For $x\in\cA^{\NN}$, write $Q_k(x)=Q_k(x|k)$.

For $x\in X$, let the corresponding word set be
\begin{equation}\label{eq:symbolic-cube}
  \Gamma_k(x)=\left\{I\in\cL_k(X):
  \tau_i(I|\lfloor\theta_i k\rfloor)
  =\tau_i(x|\lfloor\theta_i k\rfloor)
  \text{ for }1\le i\le s\right\}.
\end{equation}
The corresponding \textit{cylinder set} is
\[
  [\Gamma_k(x)]=\bigcup_{I\in\Gamma_k(x)}[I].
\]
The following elementary observation from Euclidean geometry will be used: for every
level-$k$ approximate cube $Q$ with $R^{-1}(Q)\cap X\ne\varnothing$, there are sequences
$x^{(1)},\ldots,x^{(L)}\in X$ such that
\begin{equation}\label{eq:encoding-interface}
  R^{-1}(Q)\cap X
  \subseteq\bigcup_{\ell=1}^{L}[\Gamma_k(x^{(\ell)})],
  \qquad L\le2^d.
\end{equation}
Set
$
  \calQ_k(X)=\{Q_k(x):x\in X\}.
$
For every $x\in X$, the point $R(x)$ belongs to the closure of $Q_k(x)$. Consequently,
the closures of the approximate cubes in $\calQ_k(X)$ cover $K$.
For $z\in[0,1]^d$, let $Q_k(z)$ be the unique level-$k$ approximate cube containing $z$.
If $z=R(x)$ is not on a grid boundary, then $Q_k(z)=Q_k(x)$.
Let
\begin{equation}\label{eq:word-vectors}
\begin{aligned}
  \calD_k(X)=\biggl\{(J_1,\ldots,J_s)\in
  \prod_{i=1}^s\cL_{\lfloor\theta_i k\rfloor-
  \lfloor\theta_{i-1}k\rfloor}(X_i):{}&
  \text{there is }I\in\cL_k(X)\text{ such that}\\
  &\tau_i(I|_{\lfloor\theta_{i-1}k\rfloor}^{\lfloor\theta_i k\rfloor})=J_i
  \text{ for every }i\biggr\}.
\end{aligned}
\end{equation}
By \cite[Lemma~2.7]{Feng2026},
\begin{equation}\label{eq:number-approximate-cubes}
  \#\calQ_k(X)=\#\calD_k(X).
\end{equation}

\begin{lemma}\label{lem:density}
Let $\lambda$ be a Borel probability measure supported on $K$, and let $g$ be a gauge function.
If
\[
  \limsup_{k\to\infty}
  \frac{\lambda(Q_k(z))}{g(n_s^{-k})}=0
  \quad\text{for $\lambda$-almost every }z,
\]
then $\HH^g(K)=\infty$. Moreover, $\HH^g$ is not $\sigma$-finite on $K$.
\end{lemma}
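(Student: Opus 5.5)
The plan is to run the standard mass distribution (Rogers--Taylor density) argument, with approximate cubes in place of balls. First I will prove a covering estimate: every set $U\subset[0,1]^d$ with $\diam U\le n_s^{-k}$ meets at most $M$ level-$k$ approximate cubes, where $M$ depends only on $d$ and the $n_i$. This holds because, by \eqref{eq:cube-size}, each side of a level-$k$ cube is comparable to $n_s^{-k}$, so the cubes are axis-parallel boxes from a grid whose mesh in every direction is $\gtrsim n_s^{-k}$. Given an arbitrary set $U$ with small diameter, I choose $k$ with $n_s^{-k-1}<\diam U\le n_s^{-k}$. Then $U$ is covered by at most $M$ level-$k$ cubes $Q$, each with $\diam Q\asymp n_s^{-k}$. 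I will also use that a gauge is increasing, so $g(n_s^{-k-1})\le g(\diam U)$. To compare $g(n_s^{-k})$ with $g(n_s^{-k-1})$ I will replace the scale $k$ by $k+1$: cover $U$ by at most $M'$ level-$(k+1)$ cubes, which is possible since $\diam U\le n_s\cdot n_s^{-k-1}$. This avoids needing any doubling property of $g$.

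Next comes the positivity statement. Fix $C>0$ and set
\[
E_{C,k_0}=\{z\in K:\lambda(Q_k(z))\le C^{-1}g(n_s^{-k})\text{ for all }k\ge k_0\}.
\]
By hypothesis the union over $k_0$ of these sets has full $\lambda$-measure, so for any Borel $F\subset K$ with $\lambda(F)>0$ there is $k_0$ with $\lambda(F\cap E_{C,k_0})>\lambda(F)/2$. Let $\{U_j\}$ be a cover of $F\cap E_{C,k_0}$ with $\diam U_j\le\delta$, where $\delta$ is small enough that the associated scale satisfies $k_j+1\ge k_0$. Discard any $U_j$ not meeting $E_{C,k_0}$. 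For each remaining $U_j$, every level-$(k_j+1)$ cube it meets that also meets $E_{C,k_0}$ has the form $Q_{k_j+1}(z)$ with $z\in E_{C,k_0}$. Hence $\lambda$ of such a cube is at most $C^{-1}g(n_s^{-k_j-1})\le C^{-1}g(\diam U_j)$. Summing over the at most $M'$ cubes gives
\[
\lambda(U_j\cap E_{C,k_0})\le M'C^{-1}g(\diam U_j).
\]
Summing over $j$ then gives $\sum_jg(\diam U_j)\ge C\lambda(F)/(2M')$. Since $C$ is arbitrary, $\HH^g(F)=\infty$ for every Borel $F\subset K$ with $\lambda(F)>0$, and in particular $\HH^g(K)=\infty$. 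There is a minor measurability point, because $E_{C,k_0}$ is defined through the grid cubes $Q_k(z)$. These cubes are Borel (finite unions of products of half-open intervals), so $E_{C,k_0}$ is Borel.

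For non-$\sigma$-finiteness, suppose $K=\bigcup_nF_n$ with $\HH^g(F_n)<\infty$. I can take each $F_n$ to be Borel: $\HH^g$ is Borel regular, so each $F_n$ sits inside a $G_\delta$ set $\tilde F_n$ of the same measure, and I replace $F_n$ by $\tilde F_n\cap K$. Since $\lambda(K)=1$, some $F_n$ has $\lambda(F_n)>0$, and the previous step then gives $\HH^g(F_n)=\infty$, a contradiction.

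The main obstacle is the bookkeeping linking an arbitrary covering set $U$ to the cubes $Q_k(z)$ with $z$ in the good set. The concern is that the hypothesis controls only cubes centred at $\lambda$-typical points, while $U$ may also meet cubes carrying no control. Restricting attention to $U\cap E_{C,k_0}$ resolves this, because only cubes containing good points contribute to $\lambda(U\cap E_{C,k_0})$.
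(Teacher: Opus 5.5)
Your proof is correct and is essentially the paper's argument. The paper cites Z.~Feng's density lemma \cite[Lemma~2.9(i)]{Feng2026} and handles subsets by renormalizing $\lambda$ to $\lambda_E$. You instead prove the bound $\HH^g(F)\ge C\lambda(F)/(2M')$ directly for every Borel $F\subset K$ with $\lambda(F)>0$, using level-$(k+1)$ cubes so that no doubling property of $g$ is needed, and you then conclude via Borel regularity exactly as the paper does.

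The one small omission is covering sets with $\diam U_j=0$, for which your choice of $k_j$ is undefined. These are harmless: every $z\in E_{C,k_0}$ satisfies $\lambda(\{z\})\le C^{-1}g(n_s^{-k})\to0$, so such sets carry no $\lambda$-mass.
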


\begin{proof}
For every $C>0$, the assumed upper density is at most $C$ almost everywhere. Hence
\cite[Lemma~2.9(i)]{Feng2026} gives $\HH^g(K)\gtrsim C^{-1}$, where the implicit constant
is independent of $C$. Letting $C\downarrow0$ proves $\HH^g(K)=\infty$.
Let $E\subset K$ be Borel with $\lambda(E)>0$ and define
$\lambda_E(A)=\lambda(A\cap E)/\lambda(E)$. For $\lambda_E$-almost every $z$,
\[
  \frac{\lambda_E(Q_k(z))}{g(n_s^{-k})}
  \le\frac1{\lambda(E)}\frac{\lambda(Q_k(z))}{g(n_s^{-k})}
  \longrightarrow0.
\]
Applying the same argument to $\lambda_E$ gives $\HH^g(E)=\infty$.
Thus every Borel set of finite $\HH^g$ measure has zero $\lambda$-mass. By Borel regularity
of Hausdorff measure, every $\HH^g$-measurable set of finite measure is contained in a Borel
set of the same finite measure. Therefore no countable family of finite-$\HH^g$-measure
sets can cover $K$, and $\HH^g$ is not $\sigma$-finite on $K$.
\end{proof}

\section{Partition of admissible words}\label{sec:word-partitions}

This section proves the finite-word decomposition used in the proof of
Proposition~\ref{thm:pressure-to-measure}. For each $N$, we partition $\cL_N(X)$ into
$O(N^{s-1})$ sets having uniform fibres at every factor level.
\smallskip

The idea comes from a standard approximation method for planar self-affine carpets. For a
Bedford--McMullen carpet, one can pass to a sufficiently high iterate and
select words whose digit frequencies are close to those of a measure of full Hausdorff
dimension. The selected words define a Bedford--McMullen subcarpet with uniform fibres and
Hausdorff dimension arbitrarily close to that of the original carpet. This approximation from
within was used in \cite[Section~4]{FergusonJordanShmerkin2010} and
\cite[Section~6]{FraserShmerkin2016}. Here a single subsystem is not enough. Instead, for
each fixed word length, we divide all admissible words into finitely many sets, each having
uniform fibres at every factor level.
\smallskip

Fix $N\in\NN$ and write $\mathcal W_i=\cL_N(X_i)$. When no confusion can arise, we use the
same symbols for the restrictions
\[
  \pi_{i+1}:\mathcal W_i\to\mathcal W_{i+1},
  \qquad
  \tau_i:\mathcal W_1\to\mathcal W_i.
\]
For $C\subset\mathcal W_1$ and $J\in\tau_i(C)$, write
\[
  m_i(C,J)=\#\{I\in C:\tau_i(I)=J\}.
\]
We consider sets $C\subset\mathcal W_1$ for which, for every
$1\le i\le s$, the number $m_i(C,J)$ is independent of
$J\in\tau_i(C)$. In other words, all fibres of the map
$\tau_i:C\to\tau_i(C)$ have the same cardinality. We say that such a set $C$ has
\emph{uniform fibres at every level}. Denote the common fibre cardinality by $M_i(C)$, and write
$B_i(C)=\#\tau_i(C)$. Then
\begin{equation}\label{eq:partition-cardinality}
	\#C=B_i(C)M_i(C)\qquad 1\le i\le s.
\end{equation}
Recall that $\cS=\{x_1:x\in X\}$, and write $\cS_i=\tau_i(\cS)$. A partition of
$\mathcal W_1$ is a family of pairwise disjoint subsets whose union is $\mathcal W_1$.

\begin{lemma}\label{lem:word-partition}
	The set $\mathcal W_1$ has a partition
	$\{C_t\}_{t=1}^{q_N}$ such that every $C_t$ has uniform fibres at every level. Moreover,
	\begin{equation}\label{eq:number-partition-parts}
		q_N\le
		\prod_{i=1}^{s-1}
		\left(1+N\left\lceil
          \frac{\log\#\cS_i}{\log2}\right\rceil\right)
		=O(N^{s-1}).
	\end{equation}
\end{lemma}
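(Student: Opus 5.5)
The plan is to build the partition by successive refinement along the factor chain, from the finest level $\mathcal W_1$ towards the coarsest $\mathcal W_s$. Exact uniformity (rather than uniformity up to a factor of $2$) comes from splitting each fibre according to the binary expansion of its cardinality. At stage $i$ ($1\le i\le s$) we aim for a partition $\mathcal P_i$ of $\mathcal W_1$ with two properties. First, every $C\in\mathcal P_i$ has uniform fibres for $\tau_1,\ldots,\tau_i$. Second, $\#\mathcal P_i\le\prod_{j=1}^{i-1}(1+N\lceil\log\#\cS_j/\log2\rceil)$. We start with $\mathcal P_1=\{\mathcal W_1\}$; this works because $\tau_1$ is the identity, so all its fibres have cardinality $1$. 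The lemma is then the case $i=s$, and the bound $O(N^{s-1})$ is immediate from the product.

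\emph{Inductive step from $i$ to $i+1$.} Fix $C\in\mathcal P_i$ and put $E=\tau_i(C)\subset\mathcal W_i$. For $K\in\pi_{i+1}(E)$ let $m_K=\#\{J\in E:\pi_{i+1}(J)=K\}$. Every such $J$ is a word of length $N$ over $\cS_i$, so $1\le m_K\le(\#\cS_i)^N$. Hence the binary expansion of $m_K$ uses only bits $b\in\{0,1,\ldots,N\lceil\log\#\cS_i/\log2\rceil\}$, a set of at most $1+N\lceil\log\#\cS_i/\log2\rceil$ values. For each $K$, we split its fibre in $E$ into disjoint pieces $F_K^{(b)}$, one piece of cardinality exactly $2^b$ for each bit $b$ present in $m_K$. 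We then set
\[
  C_b=C\cap\tau_i^{-1}\Bigl(\bigcup_{K}F_K^{(b)}\Bigr),
\]
discard the empty ones, and let $\mathcal P_{i+1}$ consist of all the sets $C_b$ obtained in this way. The sets $C_b$ partition $C$, so $\#\mathcal P_{i+1}\le(1+N\lceil\log\#\cS_i/\log2\rceil)\#\mathcal P_i$.

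\emph{Verification of uniformity.} This is the only point needing care. By construction, $C_b$ is a union of full $\tau_i$-fibres of $C$. Since $\tau_i=\pi_i\circ\cdots\circ\pi_{j+1}\circ\tau_j$ for $j\le i$, a $\tau_i$-saturated subset of $C$ is also $\tau_j$-saturated. So for $j\le i$ and $J\in\tau_j(C_b)$, the $\tau_j$-fibre of $J$ in $C_b$ equals its $\tau_j$-fibre in $C$, which has cardinality $M_j(C)$; uniformity at levels $\le i$ survives. At level $i+1$, a point $K\in\tau_{i+1}(C_b)$ has exactly $2^b$ preimages in $\tau_i(C_b)$, namely $F_K^{(b)}$. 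Each of these has exactly $M_i(C)$ preimages in $C_b$. So $m_{i+1}(C_b,K)=2^bM_i(C)$, independent of $K$.

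\emph{Main obstacle.} The key design choice is the order of refinement. If one refines from the coarse end instead (first making a set of $\mathcal W_s$-words uniform, then refining above it), then keeping only the $J$'s whose bit $b$ is set shrinks the image. That destroys uniformity already achieved at coarser levels. Proceeding from level $1$ upward avoids this, because each refinement only removes whole fibres of finer levels; that is exactly the saturation argument above. The remaining point is the bound on the number of bits, which follows from the crude fibre bound $(\#\cS_i)^N$.
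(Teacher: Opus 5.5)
Your proposal is correct and is essentially the paper's construction. Your refinement $C\mapsto\{C_b\}$ at step $i$ is the paper's grouping of the nonempty sets $G_i(r_1,\ldots,r_{i-1};J_i)$ over each $J_{i+1}$ according to the binary expansion of their number $q_i$, and your $\tau_i$-saturation argument corresponds to the paper's verification of (P3). Your bit count via the fibre bound $(\#\cS_i)^N$ also matches the paper's count of index vectors.
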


\begin{proof}
We construct the partition recursively along the factor chain. The construction will use
sets of the form
\[
  G_i(r_1,\ldots,r_{i-1};J_i)\subset\mathcal W_1,
  \qquad J_i\in\mathcal W_i,\quad 2\le i\le s.
\]
For each $i$, we construct the nonempty sets of this form so that they satisfy the following
three properties.
\begin{enumerate}[label=\textup{(P\arabic*)}]
  \item They are pairwise disjoint and their union is $\mathcal W_1$.
  \item Every word in $G_i(r_1,\ldots,r_{i-1};J_i)$ projects to $J_i$ under $\tau_i$.
  \item For $1\le \ell\le i$ and
  $L\in\tau_\ell(G_i(r_1,\ldots,r_{i-1};J_i))$,
  \begin{equation}\label{eq:inductive-equal-fibres}
    \#\{I\in G_i(r_1,\ldots,r_{i-1};J_i):\tau_\ell(I)=L\}
    =2^{r_1+\cdots+r_{\ell-1}}.
  \end{equation}
  For $\ell=1$, the exponent is the empty sum $0$, so the right-hand side is $1$.
\end{enumerate}
In particular, taking $\ell=i$ in \eqref{eq:inductive-equal-fibres} and using
\textup{(P2)} gives
\begin{equation}\label{eq:inductive-group-size}
  \#G_i(r_1,\ldots,r_{i-1};J_i)
  =2^{r_1+\cdots+r_{i-1}}.
\end{equation}

We first construct the level-$2$ sets. For $J_2\in\mathcal W_2$, let
\[
  q_1(J_2)=\#\{I\in\mathcal W_1:\tau_2(I)=J_2\}\geq 1.
\]
Write its binary expansion as
\begin{equation*}
  q_1(J_2)=\sum_{r_1\ge0}\varepsilon_{r_1}(J_2)2^{r_1},
  \qquad \varepsilon_{r_1}(J_2)\in\{0,1\}.
\end{equation*}
Using this binary expansion, split the fibre $\tau_2^{-1}(J_2)$ into sets
\[
  G_2(r_1;J_2),
  \qquad \#G_2(r_1;J_2)=2^{r_1},
\]
for each $r_1$ with $\varepsilon_{r_1}(J_2)=1$. The cardinalities of these sets add up to
$q_1(J_2)$. As $J_2$ varies, these sets
partition $\mathcal W_1$. Every word $I\in G_2(r_1;J_2)$ satisfies $\tau_2(I)=J_2$, and
$\tau_1$ is the identity.
Consequently, properties \textup{(P1)}--\textup{(P3)} hold for $i=2$.
\smallskip

Suppose now that the level-$i$ sets have been constructed for some $2\le i<s$. Fix
$(r_1,\ldots,r_{i-1})$ and $J_{i+1}\in\mathcal W_{i+1}$, and consider the finite family
\begin{equation}\label{eq:family-to-be-grouped}
  \mathscr F_i(r_1,\ldots,r_{i-1};J_{i+1})
  =\left\{
    G_i(r_1,\ldots,r_{i-1};J_i):
    \begin{array}{l}
      J_i\in\mathcal W_i,\ \pi_{i+1}(J_i)=J_{i+1},\\
      G_i(r_1,\ldots,r_{i-1};J_i)\ne\varnothing
    \end{array}
  \right\}.
\end{equation}
Let $q_i(r_1,\ldots,r_{i-1};J_{i+1})$ be the number of sets in this family. If this
number is zero, there is nothing to do. Otherwise, write its binary expansion as
\begin{equation*}
  q_i(r_1,\ldots,r_{i-1};J_{i+1})
  =\sum_{r_i\ge0}\varepsilon_{r_i}2^{r_i},
  \qquad \varepsilon_{r_i}\in\{0,1\}.
\end{equation*}
Partition the family in \eqref{eq:family-to-be-grouped} into disjoint subfamilies
$\mathscr F_{i,r_i}$ with $\varepsilon_{r_i}=1$, so that
$\#\mathscr F_{i,r_i}=2^{r_i}$. Let
\begin{equation}\label{eq:next-level-group}
  G_{i+1}(r_1,\ldots,r_i;J_{i+1})
  =\bigcup_{G\in\mathscr F_{i,r_i}}G.
\end{equation}

The new sets are unions of disjoint level-$i$ sets, and every level-$i$ set is used exactly
once. Hence they are pairwise disjoint and cover $\mathcal W_1$, which proves
\textup{(P1)}. Every word in a new set projects to $J_{i+1}$, as
$
  \tau_{i+1}(I)=\pi_{i+1}(\tau_i(I))=J_{i+1}
$
for every word $I$ in that set. This proves \textup{(P2)}.

To prove \textup{(P3)}, first take $1\le\ell\le i$ and
$L\in\tau_\ell(G_{i+1}(r_1,\ldots,r_i;J_{i+1}))$. Among the level-$i$ sets whose union
defines the new set in \eqref{eq:next-level-group}, exactly one contains a word whose
$\tau_\ell$-image is $L$. Indeed, suppose that two sets
$G_i(r_1,\ldots,r_{i-1};J_i)$ and $G_i(r_1,\ldots,r_{i-1};J_i')$ have
$\tau_\ell$-images containing $L$. If $\ell<i$, then
\[
  J_i=(\pi_i\circ\cdots\circ\pi_{\ell+1})(L)=J_i'.
\]
If $\ell=i$, then $L=J_i=J_i'$. Thus the two sets are the same. Therefore the fibre over
$L$ is contained in one level-$i$ set, and the
induction hypothesis gives
\[
  \#\{I\in G_{i+1}(r_1,\ldots,r_i;J_{i+1}):\tau_\ell(I)=L\}
  =2^{r_1+\cdots+r_{\ell-1}}.
\]
For $\ell=i+1$, the $\tau_\ell$-image of
$G_{i+1}(r_1,\ldots,r_i;J_{i+1})$ consists only of $J_{i+1}$. The union in
\eqref{eq:next-level-group} contains $2^{r_i}$ level-$i$ sets, and each has
$2^{r_1+\cdots+r_{i-1}}$ words by
\eqref{eq:inductive-group-size}. Hence
\[
  \#G_{i+1}(r_1,\ldots,r_i;J_{i+1})
  =2^{r_i}2^{r_1+\cdots+r_{i-1}}
  =2^{r_1+\cdots+r_i}.
\]
This proves \textup{(P3)} and completes the induction.
\smallskip

After the level-$s$ construction, fix a complete index vector
$\mathbf r=(r_1,\ldots,r_{s-1})$ and let
\begin{equation*}
  C_{\mathbf r}
  =\bigcup_{\substack{J_s\in\mathcal W_s\\
  G_s(r_1,\ldots,r_{s-1};J_s)\ne\varnothing}}
  G_s(r_1,\ldots,r_{s-1};J_s).
\end{equation*}
Retain only the nonempty sets $C_{\mathbf r}$. Since the level-$s$ sets form a partition of $\mathcal W_1$, the
sets $C_{\mathbf r}$ also form a partition of $\mathcal W_1$. Fix $1\le i\le s$ and
$J_i\in\tau_i(C_{\mathbf r})$.
For $1\le i<s$, the word $J_i$ has the uniquely determined image
$J_s=(\pi_s\circ\cdots\circ\pi_{i+1})(J_i)\in\mathcal W_s$; when $i=s$, set $J_s=J_i$.
Applying \eqref{eq:inductive-equal-fibres} at level $s$ gives
\begin{equation*}
  m_i(C_{\mathbf r},J_i)=2^{r_1+\cdots+r_{i-1}},\qquad 1\le i\le s.
\end{equation*}
The right-hand side is independent of $J_i$. Therefore $C_{\mathbf r}$ has uniform fibres
at every level, with $M_1(C_{\mathbf r})=1$ and
$M_i(C_{\mathbf r})=2^{r_1+\cdots+r_{i-1}}$ for $2\le i\le s$.
\smallskip

It remains to count the possible vectors $\mathbf r$. At the first step,
$q_1(J_2)$ is the number of words in a fibre of
$\pi_2:\mathcal W_1\to\mathcal W_2$, so
\[
  q_1(J_2)\le(\#\cS_1)^N.
\]
For $2\le i\le s-1$, each member of \eqref{eq:family-to-be-grouped} corresponds to a
different $J_i$ in the fibre of $\pi_{i+1}:\mathcal W_i\to\mathcal W_{i+1}$. Every word in
$\mathcal W_i$ has length $N$ over the alphabet $\cS_i$, and hence
\[
  q_i(r_1,\ldots,r_{i-1};J_{i+1})
  \le\#\pi_{i+1}^{-1}(J_{i+1})
  \le(\#\cS_i)^N.
\]
Thus, whenever an exponent $r_i$ occurs in one of the binary expansions,
$q_i(r_1,\ldots,r_{i-1};J_{i+1})$ is at least $2^{r_i}$ and at most $(\#\cS_i)^N$.
Consequently,
\[
  0\le r_i\le N\frac{\log\#\cS_i}{\log2}
  \le N\left\lceil\frac{\log\#\cS_i}{\log2}\right\rceil.
\]
Thus there are at most
\[
  \prod_{i=1}^{s-1}
  \left(1+N\left\lceil
    \frac{\log\#\cS_i}{\log2}\right\rceil\right)
\]
possible complete index vectors, which proves \eqref{eq:number-partition-parts}.
\end{proof}

\begin{remark}
Let $C\subset\mathcal W_1$ have uniform fibres at every level, and let $\PP_C$ be the uniform
probability distribution on $C$. Then \eqref{eq:partition-cardinality} gives that
$(\tau_i)_*\PP_C$ is the uniform probability distribution on $\tau_i(C)$, i.e.
\begin{equation}\label{eq:exact-factor-mass}
  (\tau_i)_*\PP_C(\{J\})=\frac{M_i(C)}{\#C}=\frac{1}{B_i(C)}
  \qquad J\in\tau_i(C).
\end{equation}
\end{remark}

\begin{lemma}\label{lem:partition-entropy}
Let $\zeta_N$ be a probability distribution on $\mathcal W_1$, and let
$\{C_t\}_{t=1}^{q_N}$ be the partition given by Lemma~\ref{lem:word-partition}. Write
$
  \vartheta_t=\zeta_N(C_t),
   B_{t,i}=B_i(C_t)=\#\tau_i(C_t)
$.
For $1\le i\le s$, let
$\zeta_{N,i}=\tau_i\zeta_N$ be the distribution on $\mathcal W_i$.
Then, for every $1\le i\le s$,
\begin{equation}\label{eq:partition-entropy}
  \sum_t\vartheta_t\log B_{t,i}
  \ge H(\zeta_{N,i})-\log q_N,
\end{equation}
where $
H(\zeta_{N,i})=-\sum_{J\in\mathcal W_i}\zeta_{N,i}(J)\log\zeta_{N,i}(J).
$
Moreover, there exist $m\in\{1,\ldots,s+1\}$, pairwise distinct indices $t_1,\ldots,t_m$,
and positive numbers $\rho_1,\ldots,\rho_m$ with $\sum_{j=1}^m\rho_j=1$ such that
\[
  \sum_t\vartheta_t\log B_{t,i}
  =\sum_{j=1}^m\rho_j\log B_{t_j,i}
  \qquad 1\le i\le s.
\]
\end{lemma}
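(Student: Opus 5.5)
The inequality \eqref{eq:partition-entropy} comes from a conditional-entropy decomposition with respect to the partition, and the convex-combination statement from Carathéodory's theorem.

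\textbf{Entropy inequality.} Fix $i$ and let $T$ be the random index of the part $C_t$ containing a $\zeta_N$-random word $I$. Write $J=\tau_i(I)$, so that $J$ has law $\zeta_{N,i}$. Then
\[
  H(\zeta_{N,i})=H(J)\le H(J,T)=H(T)+H(J\mid T).
\]
The first term satisfies $H(T)\le\log q_N$, since $T$ takes at most $q_N$ values. For the second term, conditional on $T=t$ the variable $J$ lies in $\tau_i(C_t)$, a set of cardinality $B_{t,i}$. Hence $H(J\mid T=t)\le\log B_{t,i}$, and averaging gives
\[
  H(J\mid T)\le\sum_t\vartheta_t\log B_{t,i}.
\]
Combining these bounds yields \eqref{eq:partition-entropy}. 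Parts with $\vartheta_t=0$ contribute nothing and need no special treatment. This step does not use the uniform fibre property.

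\textbf{Convex-combination statement.} Consider the vectors
\[
  v_t=(\log B_{t,1},\ldots,\log B_{t,s})\in\RR^s,\qquad t\text{ with }\vartheta_t>0.
\]
The vector $\sum_t\vartheta_t v_t$ is a convex combination of these points in $\RR^s$. Carathéodory's theorem expresses it as a convex combination of at most $s+1$ of them, so there are indices $t_1,\ldots,t_m$ with $m\le s+1$ and weights $\rho_j\ge0$ summing to $1$. Two adjustments give the required form:
\begin{itemize}
  \item discard the indices with $\rho_j=0$, which makes every $\rho_j$ positive and keeps $m\ge1$;
  \item choose the $t_j$ pairwise distinct, which is automatic because the points are indexed by $t$; if some $v_t$ coincide, just pick one representative index for each point used.
\end{itemize}
The resulting identity holds coordinatewise, which is exactly the displayed equality for every $1\le i\le s$.

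\textbf{Main obstacle.} There is little real difficulty here. The only care needed is to ensure the Carathéodory representation has strictly positive weights and distinct indices, and both are handled by the trimming described above.
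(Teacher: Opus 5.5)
Your proposal is correct and follows the paper's proof. The paper proves $H(J)\le H(T)+H(J\mid T)$ by hand: it writes $\zeta_{N,i}(J)=\sum_t a_{t,J}$ with $a_{t,J}=\zeta_N(\{I\in C_t:\tau_i(I)=J\})\le\zeta_{N,i}(J)$, which is exactly your chain-rule step, and it then applies Carath\'eodory's theorem to the vectors $(\log B_{t,1},\ldots,\log B_{t,s})$ just as you do (your trimming to positive weights and distinct indices is also correct).
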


\begin{proof}
For $\vartheta_t>0$, let $p_{t,i}$ be the probability distribution on $\mathcal W_i$ given by
\[
  p_{t,i}(J)
  =\frac{1}{\vartheta_t}
    \sum_{\substack{I\in C_t\\ \tau_i(I)=J}}\zeta_N(I).
\]
This distribution is supported on $\tau_i(C_t)$, which has $B_{t,i}$ elements, and hence
$H(p_{t,i})\le\log B_{t,i}$.
For $J\in\mathcal W_i$, write
$a_{t,J}=\vartheta_t p_{t,i}(J)$. Then
$\zeta_{N,i}(J)=\sum_{t:\vartheta_t>0}a_{t,J}$ and
$a_{t,J}\le\zeta_{N,i}(J)$. Therefore,
\begin{align*}
  H(\zeta_{N,i})
  &=-\sum_{t:\vartheta_t>0}\sum_{J\in\mathcal W_i}
       a_{t,J}\log\zeta_{N,i}(J)
  \le-\sum_{t:\vartheta_t>0}\sum_{J\in\mathcal W_i}a_{t,J}\log a_{t,J}\\
  &=-\sum_{t:\vartheta_t>0}\vartheta_t\log\vartheta_t
    +\sum_{t:\vartheta_t>0}\vartheta_tH(p_{t,i})
  \le\log q_N+\sum_t\vartheta_t\log B_{t,i}.
\end{align*}
This proves \eqref{eq:partition-entropy}.

For the last assertion, write
\[
  v_t=(\log B_{t,1},\ldots,\log B_{t,s})\in\RR^s.
\]
The vector
$
  v=\sum_t\vartheta_tv_t
$
belongs to the convex hull of the vectors $v_t$ in $\RR^s$. By Carath\'eodory's theorem,
every point in the convex hull of a subset of $\RR^s$ is a convex combination of at most
$s+1$ points of that subset.
There are $m\le s+1$ pairwise distinct indices $t_1,\ldots,t_m$ and coefficients
$\rho_1,\ldots,\rho_m>0$, with
$\sum_{j=1}^m\rho_j=1$, such that
\[
  v=\sum_{j=1}^m\rho_jv_{t_j}.
\]
Equality of the $i$-th coordinates gives the last assertion of the lemma.
\end{proof}

\begin{corollary}\label{cor:block-construction}
Suppose $\omega\in\cM_\sigma(X)$ satisfies, for every $N\ge1$,
\[
  H((\tau_i\omega)|\cL_N(X_i))=Nh(\tau_i\omega)+O(1)
  \qquad 1\le i\le s.
\]
For every $N\ge2$, let $\{C_t\}_{t=1}^{q_N}$ be a partition supplied by
Lemma~\ref{lem:word-partition}. Then there exist $m\in\{1,\ldots,s+1\}$, pairwise distinct
sets $C_{t_1},\ldots,C_{t_m}\subset\cL_N(X)$, and positive numbers
$\rho_1,\ldots,\rho_m$ with $\sum_{j=1}^m\rho_j=1$.
If
$B_{t_j,i}=\#\tau_i(C_{t_j})$, then
\begin{equation}\notag
  \sum_{j=1}^m\rho_j\log B_{t_j,i}
  \ge Nh(\tau_i\omega)-O(\log N)
  \qquad 1\le i\le s.
\end{equation}
\end{corollary}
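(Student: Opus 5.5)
We apply Lemma~\ref{lem:partition-entropy} to the distribution $\zeta_N$ on $\mathcal W_1=\cL_N(X)$ given by $\zeta_N(I)=\omega[I]$. This is a probability distribution on $\mathcal W_1$, since $\omega$ is supported on $X$ and the cylinders $[I]$, $I\in\cL_N(X)$, partition $X$.

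The first step is to identify the push-forwards. For $J\in\mathcal W_i=\cL_N(X_i)$ we have
\[
  \zeta_{N,i}(J)=\sum_{I\in\cL_N(X),\,\tau_i(I)=J}\omega[I]
  =\omega\bigl(\tau_i^{-1}[J]\bigr)=(\tau_i\omega)[J].
\]
Hence $H(\zeta_{N,i})=H((\tau_i\omega)|\cL_N(X_i))$, and by hypothesis this equals $Nh(\tau_i\omega)+O(1)$. The implicit constant is uniform in $N$ for each $i$, and since there are only $s$ indices we may take it uniform in $i$ as well.

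Next we fix $N\ge2$ and take the partition $\{C_t\}_{t=1}^{q_N}$ from Lemma~\ref{lem:word-partition}. Inequality \eqref{eq:partition-entropy} then gives
\[
  \sum_t\vartheta_t\log B_{t,i}\ge Nh(\tau_i\omega)-O(1)-\log q_N .
\]
By \eqref{eq:number-partition-parts}, $q_N=O(N^{s-1})$, so $\log q_N\le (s-1)\log N+O(1)=O(\log N)$ for $N\ge2$. This is the only reason for the restriction $N\ge2$: it keeps $\log N$ bounded away from $0$, so the $O(1)$ terms can be absorbed into $O(\log N)$.

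The last step uses the Carath\'eodory part of Lemma~\ref{lem:partition-entropy}. It supplies $m\le s+1$, pairwise distinct indices $t_1,\ldots,t_m$, and positive weights $\rho_j$ summing to $1$ such that
\[
  \sum_j\rho_j\log B_{t_j,i}=\sum_t\vartheta_t\log B_{t,i}
  \qquad\text{simultaneously for all } 1\le i\le s .
\]
Distinct indices give distinct sets $C_{t_j}$, because the members of a partition are pairwise disjoint and, as constructed in Lemma~\ref{lem:word-partition}, nonempty. Combining this identity with the bound from the previous step gives the corollary.

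There is no real obstacle here. The only points needing care are the uniformity of the $O(1)$ constant over $N$ and over $i$, and the identification of $\zeta_{N,i}$ with the block distribution of $\tau_i\omega$.
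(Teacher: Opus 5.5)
Your proposal is correct and follows essentially the same route as the paper: you apply both conclusions of Lemma~\ref{lem:partition-entropy} to $\zeta_N(I)=\omega([I])$, use the hypothesis to get $H(\zeta_{N,i})=Nh(\tau_i\omega)+O(1)$, and absorb $\log q_N=O(\log N)$ via Lemma~\ref{lem:word-partition}. You also spell out two points the paper leaves implicit, namely the identification $\zeta_{N,i}(J)=(\tau_i\omega)[J]$ and the fact that distinct indices give distinct (nonempty, disjoint) sets $C_{t_j}$.
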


\begin{proof}
Let $\zeta_N$ be the probability distribution on $\cL_N(X)$ given by
\[
  \zeta_N(I)=\omega([I])
  \qquad I\in\cL_N(X).
\]
For every $i$, write $\zeta_{N,i}=\tau_i\zeta_N$.
The hypothesis gives $H(\zeta_{N,i})=Nh(\tau_i\omega)+O(1)$ for every $i$.
Applying the two conclusions of Lemma~\ref{lem:partition-entropy} to $\zeta_N$, and using
$q_N=O(N^{s-1})$ from Lemma~\ref{lem:word-partition}, yields $m\le s+1$, pairwise distinct
indices $t_1,\ldots,t_m$, and positive coefficients $\rho_1,\ldots,\rho_m$ summing to $1$
such that, for every $1\le i\le s$,
\[
  \sum_{j=1}^m\rho_j\log B_{t_j,i}
  \ge H(\zeta_{N,i})-\log q_N
  =Nh(\tau_i\omega)-O(\log N).
\]
This proves the corollary.
\end{proof}

Corollary~\ref{cor:block-construction} gives finitely many admissible word sets together with
weights describing the proportions in which they should be used. In the measure construction in
Section~\ref{sec:measure-construction}, we need a sequence of these sets whose occurrence frequencies follow the prescribed
weights. The next lemma constructs such a sequence with a uniformly bounded
error.

\begin{lemma}\label{lem:discrepancy}
Let $q\in\NN$, let $\rho_1,\ldots,\rho_q>0$, and suppose
$\sum_{t=1}^q\rho_t=1$. There is a sequence
$u_1,u_2,\ldots\in\{1,\ldots,q\}$ with the following property.
For $r\in\NN$, let
$
  N_t(r)=\#\{1\le j\le r:u_j=t\}.
$
Then, for every
$1\le t\le q$ and $r\in\NN$,
\begin{equation}\label{eq:prefix-discrepancy}
  |N_t(r)-\rho_t r|\le q-1.
\end{equation}
Consequently, for every $1\le t\le q$ and all integers $0\le a<b$,
\[
  \left|\#\{a<j\le b:u_j=t\}-\rho_t(b-a)\right|\le2(q-1).
\]
\end{lemma}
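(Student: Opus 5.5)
We will construct the sequence greedily, by the standard "largest deficit" rule. Define $u_r$ recursively. Given $u_1,\ldots,u_{r-1}$, with counts $N_t(r-1)$, let $u_r$ be an index $t$ maximizing the deficit $\rho_t r-N_t(r-1)$, breaking ties by smallest index.

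Set $D_t(r)=N_t(r)-\rho_t r$. Then $\sum_t D_t(r)=r-r=0$ for every $r$, and $D_t(0)=0$.

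The first key step is a lower bound on each deficit. We claim that $D_t(r)>-1$ for all $t$ and $r$.

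We argue by induction. At step $r$, put $e_t=\rho_t r-N_t(r-1)$. These satisfy $\sum_t e_t=1$, so the chosen index $t^\ast$ has $e_{t^\ast}\ge 1/q>0$. Its new value is $D_{t^\ast}(r)=1-e_{t^\ast}$.

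For unchosen $t$, we have $D_t(r)=-e_t$, so the claim $D_t(r)>-1$ amounts to showing that $e_t<1$ whenever $t$ is not chosen. Suppose instead some unchosen $t$ has $e_t\ge1$. This is the delicate point. The cleanest route is the standard potential argument: a coordinate can only accumulate deficit $\ge1$ if it was passed over while another coordinate had an even larger deficit. Since every deficit is positive only when some other coordinate is in surplus, and $\sum_t e_t=1$, two coordinates with $e\ge1$ would force the remaining ones to have $\sum e\le -1$. I would make this rigorous by proving the invariant that at most one index has $e_t\ge 1$, and that index, being the maximizer, is chosen.

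Once $D_t(r)>-1$ for all $t$ is established, the upper bound follows from the zero-sum identity: $D_t(r)=-\sum_{t'\ne t}D_{t'}(r)<q-1$. Hence $|D_t(r)|\le q-1$, using $1\le q-1$ when $q\ge2$. The case $q=1$ is trivial, since then $D\equiv0$.

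The second statement follows by subtraction. Indeed,
\[
\#\{a<j\le b:u_j=t\}-\rho_t(b-a)=D_t(b)-D_t(a),
\]
so the triangle inequality gives the bound $2(q-1)$.

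The main obstacle is the invariant in the second step. If the greedy rule resists a clean induction, the fallback is to invoke the known chairman-assignment result (Tijdeman), which gives a sequence with discrepancy $|D_t(r)|\le 1$. That is stronger than the bound $q-1$ needed here, and would suffice directly.
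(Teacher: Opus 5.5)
\textbf{There is a genuine gap.} Your setup matches the paper's: the same greedy largest-deficit rule, the identity $\sum_t D_t(r)=0$, and subtraction for the window bound. The gap is that the inductive bound you try to prove is on the wrong side, and it is false. The claim $D_t(r)>-1$, equivalently that no two indices have $e_t\ge1$ at the same step, fails in general.

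Take $q=5$ and $\rho=(0.02,0.02,0.32,0.32,0.32)$ with your smallest-index rule. The greedy choices are $3,4,5,3,4,5,3,4,5,1,3,4,5,3,4,5,3,4,5,2,3,\dots$, so $N(21)=(1,1,7,6,6)$. Before step $22$, indices $4$ and $5$ both have $e_t=0.32\cdot22-6=1.04$. Whichever is not chosen ends with $D_t(22)=-1.04$. Ties are not the cause: with $\rho=(0.0051,0.0049,0.33,0.33,0.33)$ the same thing happens at step $85$, giving $D_t(85)=-1.05$ under any tie-breaking.

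Your heuristic is that two deficits $\ge1$ force $\sum e\le-1$ on the remaining indices. That is not a contradiction, because each remaining index can carry a surplus close to $1-1/q$. What actually happens is that a low-rate index with a slightly larger deficit gets served while several high-rate indices wait, and their deficits grow by $\rho_t$ per step until two of them exceed $1$ at once.

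\textbf{The repair uses only what you already computed.} Run the induction on the surplus instead. The chosen index gets $D_{t^*}(r)=1-e_{t^*}\le1-1/q<1$. Every unchosen index has $D_t(r)=D_t(r-1)-\rho_t<D_t(r-1)$. Hence $D_t(r)<1$ for all $t$ and $r$. The zero-sum identity then gives $D_t(r)=-\sum_{t'\ne t}D_{t'}(r)>-(q-1)$.

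This is exactly the paper's proof. It also explains why the lemma's constant is $q-1$ rather than $1$: the easy one-sided bound is on the surplus, and transferring it to the deficit costs a factor $q-1$.

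Your fallback to Tijdeman's chairman-assignment theorem would also prove the lemma, with a different, non-greedy sequence. However, it swaps a three-line induction for a much deeper external result, and it is not needed here.
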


\begin{proof}
If $q=1$, take $u_r=1$ for every $r$. Hence assume $q\ge2$.

Choose $u_1$ arbitrarily. For $1\le t\le q$ and $r\in\NN$, write
$
  e_t(r)=N_t(r)-r\rho_t.
$
Having chosen the first $r$ terms, where $r\ge1$, write
$
  p_t(r)=(r+1)\rho_t-N_t(r)=\rho_t-e_t(r),
$
and choose $u_{r+1}$ to be an index for which $p_t(r)$ is largest. Since
\[
  \sum_{t=1}^q p_t(r)
  =(r+1)\sum_{t=1}^q\rho_t-\sum_{t=1}^qN_t(r)
  =(r+1)-r=1,
\]
the largest number $p_t(r)$ is positive.

We prove by induction that for $1\le t\le q$ and $r\in\NN$,
\begin{equation}\label{eq:one-sided-discrepancy}
  e_t(r)<1.
\end{equation}
For $r=1$, this is immediate. Assume it holds at time $r$, and let $t_*=u_{r+1}$ be the chosen
index. For this index,
\[
  e_{t_*}(r+1)
  =e_{t_*}(r)+1-\rho_{t_*}
  =1-p_{t_*}(r)<1,
\]
since $p_{t_*}(r)>0$. If $t\ne t_*$, then
\[
  e_t(r+1)=e_t(r)-\rho_t<1,
\]
by the induction hypothesis and $\rho_t>0$. This proves
\eqref{eq:one-sided-discrepancy}.

For every $r\in\NN$ we also have
\[
  \sum_{t=1}^q e_t(r)
  =\sum_{t=1}^qN_t(r)-r\sum_{t=1}^q\rho_t=r-r=0.
\]
Consequently, for each $1\le t\le q$,
\[
  e_t(r)=-\sum_{\substack{1\le v\le q\\v\ne t}}e_v(r)>-(q-1).
\]
Together with \eqref{eq:one-sided-discrepancy}, this gives
$|e_t(r)|\le q-1$, and hence proves
\eqref{eq:prefix-discrepancy}.

Finally, for $1\le a<b$,
\[
  \#\{a<j\le b:u_j=t\}-\rho_t(b-a)
  =N_t(b)-N_t(a)-\rho_t(b-a)=e_t(b)-e_t(a).
\]
This gives the stated bound when $a\ge1$; when $a=0$, it follows directly from
\eqref{eq:prefix-discrepancy}.
\end{proof}

\section{Proof of Proposition~\ref{thm:pressure-to-measure}}\label{sec:measure-construction}

The preceding section produced finite sets of admissible words with uniform fibres and
factor cardinalities that give a required entropy lower bound. We now use these sets to choose
independent random source blocks, join the chosen blocks by weak specification, and push the
resulting product measure forward to $K$. The following
proposition gives the quantitative estimate needed for
Proposition~\ref{thm:pressure-to-measure}.

\begin{proposition}\label{thm:measure-construction}
Let $X$ be a subshift satisfying weak specification and set $K=R(X)$.
Suppose that $\eta,\xi\in\cM_\sigma(X)$ satisfy the following conditions.
\begin{enumerate}[label=\textup{(\roman*)}]
  \item for every $\omega\in\{\eta,\xi\}$, $1\le i\le s$, and $N\ge1$,
  \(
     H((\tau_i\omega)|\cL_N(X_i))=Nh(\tau_i\omega)+O_{\omega}(1);
  \)
  \item $\eta$ is an $\mathbf a$-weighted equilibrium state, i.e.,
  $\sum_{i=1}^s a_i h(\tau_i\eta)=P^{\mathbf a}(\sigma)$;
  \item there is $\beta\in(1/2,1)$ such that
  $\sum_{i=1}^s\Delta_i(\theta_i^\beta-\theta_{i-1}^\beta)>0$, where
  $\Delta_i=h(\tau_i\xi)-h(\tau_i\eta)$ for $1\le i\le s$.
\end{enumerate}
Then there exist a Borel probability measure $\lambda$ supported on $K$, a constant $c_0>0$, and an
integer $k_0$ such that
\begin{equation}\label{eq:cube-mass-decay}
  \log\lambda(Q_k(z))+kP^{\mathbf a}(\sigma)\le-c_0k^\beta
\end{equation}
for every $z\in K$ and every $k\ge k_0$.
\end{proposition}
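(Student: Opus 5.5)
Our plan is to build $\lambda$ as the image under $R$ of a random concatenation of independent uniformly chosen source blocks. The blocks are taken from the word sets supplied by Corollary~\ref{cor:block-construction}. The block types switch from $\xi$-type to $\eta$-type along a schedule in which block lengths grow slowly. The aim is that at scale $k$ the measure looks like $\xi$ on an initial stretch of symbolic times and like $\eta$ afterwards; the gain $\sum_i\Delta_i(\theta_i^\beta-\theta_{i-1}^\beta)>0$ then produces the sub-exponential saving $k^\beta$.

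\textbf{Step 1 (blocks).} For block length $N$, apply Corollary~\ref{cor:block-construction} to $\omega=\eta$ and to $\omega=\xi$. This uses hypothesis (i) and gives finitely many uniform-fibre sets $C_{t_j}$ with weights $\rho_j$. Uniform fibres give the factor masses exactly by \eqref{eq:exact-factor-mass}. Lemma~\ref{lem:discrepancy} orders the sets so that the empirical frequencies match $\rho_j$ up to $O(s)$ blocks. Consecutive blocks are joined by words of length $\le p$ from weak specification. The connectors depend on the pair, but since they are short we will treat them as a bounded distortion: they cost $O(1)$ symbols per block. To avoid dependence issues we choose the connector deterministically as a function of the adjacent chosen blocks, so that $\lambda$ is a pushforward of a product measure.

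\textbf{Step 2 (schedule).} Symbolic times are split into the cumulative positions $\lfloor\theta_i k\rfloor$. For the approximate cube $Q_k(z)$, the $\lambda$-mass of $[\Gamma_k(x)]$ is, up to the $2^d$ covering from \eqref{eq:encoding-interface}, the product over groups $i$ of the $\tau_i$-uniform masses of blocks lying in $(\lfloor\theta_{i-1}k\rfloor,\lfloor\theta_ik\rfloor]$. Using \eqref{eq:exact-factor-mass}, each block of type $C$ contributes $-\log B_i(C)\approx -Nh(\tau_i\omega)+O(\log N)$ for the relevant $\omega$ and $i$. To obtain a $\beta$-power gain at every scale $k$ simultaneously, we will make the measure scale-invariant in the sense that the type used at symbolic time $n$ is $\xi$ or $\eta$ according to a self-similar pattern. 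One natural choice is to use $\xi$-blocks on the times $n$ with $\{\log n\}$ in a suitable set. A cleaner option is to weight by the derivative of $n^\beta$: at time $n$, use $\xi$ with density proportional to $\beta n^{\beta-1}$ relative to $\eta$. Then the total entropy at scale $k$ is
\[
kP^{\mathbf a}(\sigma)+c\sum_i\Delta_i\bigl((\theta_ik)^\beta-(\theta_{i-1}k)^\beta\bigr)+\text{errors}.
\]
The $\eta$ part gives exactly $kP^{\mathbf a}$ by (ii), and the $\xi$ part gives $ck^\beta\sum_i\Delta_i(\theta_i^\beta-\theta_{i-1}^\beta)$ by (iii). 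Concretely, we interleave: on times near $n$, blocks of length $N_n\to\infty$ are drawn, and within a window a fraction $\asymp\varepsilon n^{\beta-1}$ of the blocks are $\xi$-blocks.

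\textbf{Step 3 (errors).} The errors come from three sources: the $O(\log N)$ losses per block, the connectors, and boundary effects at the cut points $\lfloor\theta_ik\rfloor$, which cut through a block. With block lengths $N_n\asymp n^{\alpha}$, the number of blocks up to $k$ is $k^{1-\alpha}$. The total error is then $O(k^{1-\alpha}\log k+k^\alpha)$ plus the discrepancy terms. This is $o(k^\beta)$ provided $1-\alpha<\beta$ and $\alpha<\beta$, i.e.\ $\alpha\in(1-\beta,\beta)$. This interval is nonempty exactly because $\beta>1/2$, which explains the hypothesis. The bound must be uniform in $z$. Uniform fibres give exact masses for every cylinder rather than typical ones, so no almost-sure argument is needed, and the estimate \eqref{eq:cube-mass-decay} holds for all $z$ and $k\ge k_0$.

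The main obstacle is Step 2 combined with the uniformity in Step 3. We must arrange the $\xi/\eta$ mixing densities and the block lengths so that the gain accumulates as $k^\beta$ at every scale, while the connector words do not disturb the alignment of positions with $\lfloor\theta_ik\rfloor$. We would first try to absorb the connectors by fixing all connector lengths equal to some $p'\le p$ via padding. If that fails, we would bound the position shifts by $O(\#\text{blocks})=O(k^{1-\alpha})$ and treat them as part of the boundary error.
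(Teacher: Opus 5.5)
Your overall design matches the paper's. You use uniform-fibre sets from Corollary~\ref{cor:block-construction} for $\eta$ and $\xi$, ordered by Lemma~\ref{lem:discrepancy}. Connectors are chosen as a fixed function of the two adjacent blocks, so that $\lambda=(R\circ\Pi)_*\PP$ with $\PP$ a product measure. The $\xi$-density is $(n+1)^{\beta-1}$, and there are $O(t^{1/2})$ blocks of length $O(t^{1/2})$ up to time $t$; the paper takes $R_j=L_j=2^j$, i.e.\ your $\alpha=1/2$.

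There is, however, a genuine gap at exactly the point you flag as the main obstacle, and neither remedy you offer closes it. In Step~2 you bound the mass of $\Pi^{-1}([\Gamma_k(x)])$ by a product over blocks of $B_i(C_r)^{-1}$. That step uses independence of the $I_r$, which is only legitimate if each block occupies a deterministic interval. But $I_r$ starts at position $\overline s_{r-1}+\sum_{u<r}|W_u(\mathbf I)|+1$, and $W_{r-1}=\mathscr W(I_{r-1},I_r)$ depends on $I_r$ itself. So the event is not an intersection of events each depending on a single coordinate. Padding all connectors to one length $p'$ is not available under weak specification. For example, let $X$ consist of the sequences alternating between two disjoint digit sets, in either phase: then $p=1$ works, but the parity of the connector length is forced by the phases of the two words being joined. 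Bounding the shifts by $O(p\,r_k)$ as boundary error only controls how many digits are lost near the cut points; it says nothing about the probability of the event.

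The missing step is a decoupling over connector lengths. The paper partitions $\Omega$ by the pattern $\boldsymbol\ell=(|W_1(\mathbf I)|,\ldots,|W_{r_k}(\mathbf I)|)$, which takes at most $(p+1)^{r_k}$ values. On each piece, every block interval $\mathcal I_r(\boldsymbol\ell)$ with $r\le r_k$ is fixed. After discarding the requirement that the pattern equal $\boldsymbol\ell$, the event is contained in $\bigcap_i\bigcap_{r\in\mathcal B_{i,k}(\boldsymbol\ell)}A_{r,i}(x,\boldsymbol\ell)$, whose members depend on distinct coordinates. Summing over patterns then yields \eqref{eq:geometric-product}. The loss $(p+1)^{r_k}=e^{O(k^{1/2})}$ is $e^{o(k^\beta)}$ precisely because $\beta>1/2$. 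The subsequent entropy bound must be uniform in $\boldsymbol\ell$, which is why \eqref{eq:M-tracking} is proved for every $\mathbf I\in\Omega$.

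Your shift bound could alternatively be made rigorous by a union over the at most $p(r-1)+1$ possible offsets of each block, restricted to blocks lying inside $I_i(k)$ at every offset. Each event is then a function of $I_r$ alone, at total cost $e^{O(r_k\log r_k)}=e^{o(k^\beta)}$.

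One further point needs care. Since $\sum_ia_i\Delta_i\le0<\sum_ia_i(\beta)\Delta_i$, the $\Delta_i$ have mixed signs, so a local $\xi$-fraction that is only $\asymp n^{\beta-1}$ could make the gain negative. You need the cumulative number of $\xi$-digits among the first $t$ positions to equal $c\,t^\beta+o(t^\beta)$ for a fixed $c>0$, uniformly in $t$ and $\mathbf I$. The paper obtains this from the floor schedule \eqref{eq:binary-schedule}, run over stages of equal block length. That stage structure is also what makes Lemma~\ref{lem:discrepancy} applicable.
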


We first derive Proposition~\ref{thm:pressure-to-measure} from
Proposition~\ref{thm:measure-construction}.

\begin{proof}[Proof of Proposition~\ref{thm:pressure-to-measure} based on Proposition~\ref{thm:measure-construction}]
Let $\eta=\eta_{\mathbf a}$ and $\xi=\eta_{\mathbf a(\beta)}$. Then
$P^{\mathbf a(\beta)}(\sigma)=\sum_{i=1}^s a_i(\beta)h(\tau_i\xi)$.
Proposition~\ref{prop:known-results}\textup{(iii)} shows that
$\eta$ and $\xi$ satisfy condition \textup{(i)} of
Proposition~\ref{thm:measure-construction}. Since $\eta=\eta_{\mathbf a}$,
Proposition~\ref{prop:known-results}\textup{(ii)} gives condition \textup{(ii)}. With $\Delta_i$
as in condition \textup{(iii)}, the definition of $c(\beta)$ in \eqref{eq:pressure-gap} gives
\begin{align*}
  \sum_{i=1}^s\Delta_i
  (\theta_i^\beta-\theta_{i-1}^\beta)
  =\sum_{i=1}^sa_i(\beta)
    \bigl(h(\tau_i\xi)-h(\tau_i\eta)\bigr)
  =c(\beta)>0.
\end{align*}
Proposition~\ref{thm:measure-construction} gives a Borel probability measure
$\lambda$ on $K$ and constants $c_0>0$ and $k_0$ such that, for every $z\in K$ and
$k\ge k_0$,
\begin{equation}\label{eq:pressure-mass-bound}
  \lambda(Q_k(z))
  \le\exp\{-kP^{\mathbf a}(\sigma)-c_0k^\beta\}.
\end{equation}

Write $L=\log n_s$ and $\rho_k=n_s^{-k}=e^{-kL}$ for $k\ge1$. By
\eqref{eq:dimension-formulas}, $P^{\mathbf a}(\sigma)=\gamma\log n_s=\gamma L$.
Choose $0<\varepsilon<c_0/L^\beta$ and write
$\delta=c_0-\varepsilon L^\beta>0$.
With this value of $\varepsilon$, let $g_\beta$ be a gauge function satisfying
$g_\beta(r)=r^\gamma\exp\{-\varepsilon(\log(1/r))^\beta\}$ for $0<r\le1$.
Then
\begin{equation}\label{eq:gauge-at-grid-scale}
  g_\beta(\rho_k)=\exp\{-kP^{\mathbf a}(\sigma)-\varepsilon L^\beta k^\beta\}.
\end{equation}
For every $z\in K$, equations \eqref{eq:pressure-mass-bound} and
\eqref{eq:gauge-at-grid-scale} give
\begin{equation}\label{eq:grid-zero-density}
  0\le
  \frac{\lambda(Q_k(z))}{g_\beta(\rho_k)}
  \le e^{-\delta k^\beta}
  \longrightarrow0\qquad k\to\infty.
\end{equation}

Lemma~\ref{lem:density}, applied to
\eqref{eq:grid-zero-density}, shows that $\HH^{g_\beta}(K)=\infty$ and that
$\HH^{g_\beta}$ is not $\sigma$-finite on $K$. Since $g_\beta(r)\le r^\gamma$, we have
$\HH^{g_\beta}(E)\le\HH^\gamma(E)$ for every $E\subset K$. Therefore
$\HH^\gamma(K)=\infty$, and $\HH^\gamma$ is not $\sigma$-finite on $K$.
\end{proof}

The proof of Proposition~\ref{thm:measure-construction} has four parts. We first choose the
source-block coordinate sets $C_r$ in Lemma~\ref{lem:word-partition}, use
Lemma~\ref{lem:discrepancy} to schedule them, and construct the product measure $\PP$. We then
use weak specification to insert joining words and define the push-forward measure $\lambda$.
Next, we estimate the $\lambda$-mass of approximate cubes. Finally, the entropy estimates from
Corollary~\ref{cor:block-construction}, together with the block schedule, prove
Proposition~\ref{thm:measure-construction}.

\subsection{Measure construction}

We shall construct a product space whose coordinates are admissible words, called
\emph{source blocks}. We first prescribe the length of every source block and then choose
the finite set of words allowed at each coordinate. Fix a value $\beta\in(1/2,1)$ satisfying condition
\textup{(iii)} of Proposition~\ref{thm:measure-construction}.

For $j\ge1$, let $R_j=L_j=2^j$ and $Q_j=\sum_{h=1}^jR_h=2^{j+1}-2$, with $Q_0=0$.
For $j\ge1$, the source blocks with indices $Q_{j-1}<r\le Q_j$ form stage $j$. For every
such $r$, let the $r$-th source block have length $L(r)=L_j$.
For $r\ge1$, place the first $r$ source blocks consecutively and denote their right endpoint
by $\overline s_r=\sum_{u=1}^rL(u)$; set $\overline s_0=0$.
For $j\ge1$, denote the right endpoint of stage $j$ in these source coordinates by
$\overline S_j=\overline s_{Q_j}=\sum_{h=1}^jR_hL_h$; set $\overline S_0=0$.
For $n\in\NN\cup\{0\}$, let $f(n)=(n+1)^{\beta-1}$, so that $0<f(n)\le1$.

\begin{lemma}\label{lem:source-product-measure}

Under the assumptions of Proposition~\ref{thm:measure-construction}, there exist
a sequence $(\varepsilon_r)_{r\ge1}\in\{0,1\}^{\NN}$ and finite nonempty sets
\[
  C_r\subset\cL_{L(r)}(X),\qquad r\ge1,
\]
each having uniform fibres at every level. Set
\[
  \Omega=\prod_{r=1}^{\infty}C_r.
\]
If $\nu_r$ is the uniform probability measure on $C_r$, then
$\PP=\bigotimes_{r=1}^{\infty}\nu_r$ is a Borel probability measure on $\Omega$.

In particular, for every $m\ge1$ and every choice
$J_r\in C_r$, $1\le r\le m$,
\begin{equation}\label{eq:source-product-cylinders}
  \PP\left\{(I_r)_{r\ge1}\in\Omega:I_1=J_1,\ldots,I_m=J_m \right\}
  =\prod_{r=1}^m\frac1{\#C_r}.
\end{equation}
The set $C_r$ is selected from the block construction for $\eta$ when
$\varepsilon_r=0$, and from the block construction for $\xi$ when $\varepsilon_r=1$.
\end{lemma}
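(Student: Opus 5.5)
The lemma is essentially a construction plus a standard product-measure statement, so the plan is short. Start by defining the selection sequence $(\varepsilon_r)$. The intended choice, dictated by the later mass estimate, uses $f(n)=(n+1)^{\beta-1}$ as a target frequency of $\xi$-blocks. Within stage $j$, choose $\varepsilon_r$ by a greedy (discrepancy) rule, using Lemma~\ref{lem:discrepancy} with $q=2$ and weights $(1-f,f)$ evaluated at the appropriate stage index. This makes the proportion of $\xi$-blocks among the first $r$ source blocks track $f$ up to bounded error. The lemma as stated only requires $\varepsilon_r\in\{0,1\}$, though, so any fixed rule suffices for existence. I would simply define $\varepsilon_r$ explicitly and postpone the quantitative properties to the later estimates.

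Next, for each length $N=L_j=2^j$ and each $\omega\in\{\eta,\xi\}$, apply Corollary~\ref{cor:block-construction}; its hypothesis is condition (i) of Proposition~\ref{thm:measure-construction}. This gives $m_\omega\le s+1$ distinct sets $C^{\omega}_{t_1},\ldots,C^{\omega}_{t_m}\subset\cL_{N}(X)$ from the partition of Lemma~\ref{lem:word-partition}, with weights $\rho^\omega_1,\ldots,\rho^\omega_m$. Each set is nonempty because partitions retain only nonempty parts, and each has uniform fibres at every level by Lemma~\ref{lem:word-partition}. Then schedule them. For each $\omega$ and $j$, take the sequence $u^{\omega,j}_1,u^{\omega,j}_2,\ldots$ from Lemma~\ref{lem:discrepancy} with $q=m_\omega$ and weights $\rho^\omega$. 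For $Q_{j-1}<r\le Q_j$ with $\varepsilon_r=\omega$ (identifying $0$ with $\eta$ and $1$ with $\xi$), let $C_r=C^{\omega}_{t_u}$, where $u=u^{\omega,j}_{\kappa}$ and $\kappa$ is the number of indices $r'\in(Q_{j-1},r]$ with $\varepsilon_{r'}=\varepsilon_r$. Thus $C_r\subset\cL_{L(r)}(X)$ is finite and nonempty, has uniform fibres, and is chosen from the block construction for $\eta$ or $\xi$ according to $\varepsilon_r$, as the lemma requires.

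Finally, the measure. Each $C_r$ is a finite discrete space, so $\Omega=\prod_r C_r$ is a compact metrizable space with the product topology. The Borel $\sigma$-algebra coincides with the product $\sigma$-algebra, since it is generated by the countably many cylinders. By the Kolmogorov extension theorem, or the standard existence of infinite products of probability measures, $\PP=\bigotimes_r\nu_r$ exists as a Borel probability measure on $\Omega$. Cylinder masses are products of $\nu_r(\{J_r\})=1/\#C_r$, which gives \eqref{eq:source-product-cylinders}.

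The only delicate point is bookkeeping, namely making the selection rule explicit enough that the later estimates can read off frequencies. Existence of the measure itself is routine.
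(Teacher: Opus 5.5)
Your proof is correct for the lemma as literally stated. For the sets $C_r$ and the measure $\PP$ it follows the paper's construction: Corollary~\ref{cor:block-construction} with $N=L_j$ for each $\omega\in\{\eta,\xi\}$, then Lemma~\ref{lem:discrepancy} applied to the stage-$j$ blocks assigned to $\omega$ in increasing order, then the standard infinite product of uniform measures.

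You diverge on $(\varepsilon_r)$, and there your remark that ``any fixed rule suffices'' holds only for this lemma in isolation. The paper's proof exists to fix the particular choice that Lemma~\ref{lem:scheduled-measure} later reads off, and the rule you sketch would not serve there. Lemma~\ref{lem:discrepancy} is stated for constant weights, so invoking it with $q=2$ means freezing $(1-f,f)$ at one value per stage. But stage $j$ occupies source positions from about $4^j/3$ to $4^{j+1}/3$, across which $f(n)=(n+1)^{\beta-1}$ changes by a factor of about $4^{\beta-1}$. A stagewise-constant density therefore makes the count of $\xi$-digits differ from $E_\beta(t)$ by order $t^\beta$ at some times in every stage, not by the $O(t^{1/2})$ required in \eqref{eq:M-tracking}. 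So your claim that this rule makes the $\xi$-proportion track $f$ up to bounded error is not right. The consequence is that the window counts of $\xi$-digits in $(\lfloor\theta_{i-1}k\rfloor,\lfloor\theta_ik\rfloor]$ could no longer be replaced by $E_\beta(\lfloor\theta_ik\rfloor)-E_\beta(\lfloor\theta_{i-1}k\rfloor)$ up to $o(k^\beta)$. Condition \textup{(iii)} of Proposition~\ref{thm:measure-construction} would then no longer control the sign of the gain term \eqref{eq:gain-main}.

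The paper instead uses time-varying weights, the block averages $x_r=L(r)^{-1}\sum_{n\in\overline{\mathcal I}_r}f(n)$, with the floor rule $\varepsilon_{r(j,u)}=\lfloor A_{j,u}\rfloor-\lfloor A_{j,u-1}\rfloor$ of \eqref{eq:binary-schedule}. This gives $\bigl|\sum_{v\le w}\varepsilon_{r(j,v)}-A_{j,w}\bigr|<1$ for every prefix of every stage, hence a total digit error $\lesssim\sum_{h\le j}L_h\lesssim L_j=O(t^{1/2})$. When you write down the construction, record this rule, or an equivalent greedy rule with varying weights, since everything in Section~\ref{sec:measure-construction} after the lemma depends on it.
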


The proof is straightforward. The sequence $(\varepsilon_r)_{r\ge1}$ and the sets $C_r$ will be
chosen in a particular way for later estimates.

\begin{proof}[Choice of $(\varepsilon_r)_{r\ge1}$ in Lemma~\ref{lem:source-product-measure}]
\renewcommand{\qedsymbol}{}
For $r\ge1$, let the interval of the $r$-th source block be
$\overline{\mathcal I}_r=(\overline s_{r-1},\overline s_r]\cap\NN$:
\[x_r=\frac 1{L(r)}\sum_{n\in\overline{\mathcal I}_r}f(n)\in[0,1].\]
For each $j\ge1$ and $1\le u\le R_j$, write
\begin{equation}\label{eq:stage-block-index}
r(j,u)=Q_{j-1}+u,
\qquad
A_{j,u}=\sum_{v=1}^u x_{r(j,v)},
\end{equation}
and set $A_{j,0}=0$. Since
$A_{j,u}-A_{j,u-1}=x_{r(j,u)}\in[0,1]$, we choose
\begin{equation}\label{eq:binary-schedule}
	\varepsilon_{r(j,u)}
	=\lfloor A_{j,u}\rfloor-\lfloor A_{j,u-1}\rfloor
	\in\{0,1\}.
\end{equation}
We assign the $r$-th source block to $\eta$ when $\varepsilon_r=0$ and to $\xi$
when $\varepsilon_r=1$.

\medskip
\noindent\emph{Choice of $(C_r)_{r\ge1}$ in Lemma~\ref{lem:source-product-measure}.} Condition
\textup{(i)} of Proposition~\ref{thm:measure-construction} allows us to apply
Corollary~\ref{cor:block-construction} to both $\eta$ and $\xi$. Fix $j$ and
$\omega\in\{\eta,\xi\}$. Applying Corollary~\ref{cor:block-construction} with $N=L_j$ gives
$m_{j,\omega}\le s+1$, sets
$C_{j,\omega,1},\ldots,C_{j,\omega,m_{j,\omega}}$, and weights
$\rho_{j,\omega,1},\ldots,\rho_{j,\omega,m_{j,\omega}}$. Let $N_{j,\omega}$ be the
number of source blocks in stage $j$ assigned to $\omega$. When $N_{j,\omega}>0$, list them as
$r_{j,\omega}(1)<\cdots<r_{j,\omega}(N_{j,\omega})$.
Lemma~\ref{lem:discrepancy} gives indices
$u_{j,\omega}(v)\in\{1,\ldots,m_{j,\omega}\}$, $1\le v\le N_{j,\omega}$, such that, for
every $1\le q\le m_{j,\omega}$ and $0\le a<b\le N_{j,\omega}$,
\begin{equation}\label{eq:type-discrepancy}
  \left|\#\{a<v\le b:u_{j,\omega}(v)=q\}
       -\rho_{j,\omega,q}(b-a)\right|
  \le2s.                                                    
\end{equation}
For $1\le v\le N_{j,\omega}$, we choose the $r_{j,\omega}(v)$-th source block set to be
\[
  C_{r_{j,\omega}(v)}=C_{j,\omega,u_{j,\omega}(v)}.
\]
\end{proof}
For $t\in\NN$, write $E_\beta(t)=\sum_{n=1}^t f(n)$, and set $E_\beta(0)=0$.
To describe the source blocks before the joining words are inserted, define, for every
integer $t\ge1$,
\begin{equation*}
  r_*(t)=\min\{r\ge1:t\le\overline s_r\},
  \qquad
  L_*(t)=L(r_*(t)).
\end{equation*}
Thus $r_*(t)$ and $L_*(t)$ give, respectively, the number and the maximum length of the
source blocks that meet the first $t$ digit positions before the joining words are inserted.

We now introduce the joining words. For each source-block sequence
$\mathbf I=(I_r)_{r\ge1}\in\Omega$, we shall insert joining words and obtain
\begin{equation}\label{eq:source-block-concatenation}
  I_1W_1(\mathbf I)I_2W_2(\mathbf I)I_3W_3(\mathbf I)\cdots \in X,
\end{equation}
Here $W_r(\mathbf I)$ is the \emph{joining word} placed between $I_r$ and
$I_{r+1}$, and $|W_r(\mathbf I)|\le p$ by weak specification. The
sequence in \eqref{eq:source-block-concatenation} is called the \emph{concatenated
sequence}. Let $S_j(\mathbf I)$ be the position of the last digit of the final source
block in stage $j$, including all joining words preceding that block, and let $\mathcal I_r(\mathbf I)\subset\NN$ be the
interval occupied by $I_r$ in the concatenated sequence in
\eqref{eq:source-block-concatenation}. For every integer $t\ge1$, let
\begin{equation*}
  M_{\mathbf I}(t)=\sum_{r\ge1}\varepsilon_r
       \#\bigl(\mathcal I_r(\mathbf I)\cap[1,t]\bigr).
\end{equation*}
Set $M_{\mathbf I}(0)=0$.
Thus $M_{\mathbf I}(t)$ counts the digits belonging to $\xi$-blocks among the first
$t$ positions of the concatenated sequence in \eqref{eq:source-block-concatenation}. It does
not count digits in joining words.

\begin{lemma}\label{lem:scheduled-measure}
Let $(\varepsilon_r)_{r\ge1}$, $(C_r)_{r\ge1}$, $\Omega$, and $\PP$ be given by
Lemma~\ref{lem:source-product-measure}. There is a continuous map $\Pi:\Omega\to X$.
Define the push-forward measure by
\begin{equation}\label{eq:lambda-definition}
  \lambda=(R\circ\Pi)_*\PP.
\end{equation}
Then $\lambda$ is a Borel probability measure on $K$.
The following estimates hold for $t\ge1$.
\begin{equation}\label{eq:block-scales}
  S_j(\mathbf I)\asymp4^j,
  \qquad r_*(t)=O(t^{1/2}),
  \qquad L_*(t)=O(t^{1/2}).
\end{equation}
Moreover, for every $\mathbf I\in\Omega$ and $t\ge1$,
\begin{equation}\label{eq:M-tracking}
  \left|M_{\mathbf I}(t)-E_\beta(t)\right|
  \lesssim L_*(t)+p\,r_*(t).
\end{equation}
\end{lemma}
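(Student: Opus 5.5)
The plan has three parts: build $\Pi$ by recursive joining, get the scale estimates \eqref{eq:block-scales} by direct counting, and prove \eqref{eq:M-tracking} by comparing the concatenated sequence with the unshifted source coordinates.

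\emph{Construction of $\Pi$.} Define $\Pi$ recursively using weak specification. Fix once and for all, for each ordered pair $(U,V)$ of admissible words, a joining word $W(U,V)$ with $|W(U,V)|\le p$ and $UW(U,V)V\in\cL(X)$. Set $P_1=I_1$ and $P_{r+1}=P_rW(P_r,I_{r+1})I_{r+1}$, with $W_r(\mathbf I)=W(P_r,I_{r+1})$. Each $P_r\in\cL(X)$, and $P_{r+1}$ extends $P_r$, so the limit sequence $\Pi(\mathbf I)$ exists. Every finite prefix of $\Pi(\mathbf I)$ is admissible, and since $X$ is closed, $\Pi(\mathbf I)\in X$. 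The first $n$ digits of $\Pi(\mathbf I)$ depend only on $I_1,\ldots,I_n$, so $\Pi$ is continuous for the product topologies; in fact it is locally constant in the first $n$ digits. Because $R$ is continuous, $\lambda=(R\circ\Pi)_*\PP$ is a Borel probability measure supported on $R(X)=K$.

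\emph{Scale estimates.} Stage $j$ contains $R_j=2^j$ blocks, each of length $L_j=2^j$. Hence
\[
\overline S_j=\sum_{h\le j}4^h\asymp4^j.
\]
Joining words add at most $pQ_j\lesssim2^j$ digits, so $S_j(\mathbf I)\asymp4^j$. For $t$ with $\overline S_{j-1}<t\le\overline S_j$, we have $t\gtrsim4^{j}$. Therefore $r_*(t)\le Q_j\lesssim2^j\lesssim t^{1/2}$ and $L_*(t)=2^j\lesssim t^{1/2}$.

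\emph{Tracking estimate.} This is the main step, and I would first work in the unshifted source coordinates. Define
\[
\overline M(t)=\sum_r\varepsilon_r\#(\overline{\mathcal I}_r\cap[1,t]).
\]
The choice \eqref{eq:binary-schedule} telescopes within stage $j$: for every $u$,
\[
\sum_{v\le u}\varepsilon_{r(j,v)}=\lfloor A_{j,u}\rfloor,
\]
so it differs from $\sum_{v\le u}x_{r(j,v)}$ by less than $1$. Multiplying by the common block length $L_j$, this says that the number of $\xi$-digits in the first $u$ blocks of stage $j$ differs from $\sum_{n}f(n)$ over those blocks by at most $L_j$. Summing over completed stages accumulates an error of at most $\sum_{h<j}L_h\lesssim L_j$. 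A partial final block contributes an additional error of at most $L_*(t)$. This gives
\[
|\overline M(t)-E_\beta(t)|\lesssim L_*(t).
\]
Here the sum of $f$ over positions before the whole-block boundary is exactly the sum of $x_r L(r)$, by the definition of $x_r$.

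\emph{Passing to the concatenated sequence.} Position $t$ in the concatenated sequence corresponds to a source position $t'$ with $t-pr_*(t)\le t'\le t$. This holds because at most $r_*(t)$ joining words, each of length at most $p$, precede position $t$, so $M_{\mathbf I}(t)=\overline M(t')$. Then, since $0<f\le1$,
\[
|E_\beta(t)-E_\beta(t')|\le t-t'\le p\,r_*(t).
\]
Combining this with the source-coordinate bound yields \eqref{eq:M-tracking}.

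The main obstacle is the careful bookkeeping of indices when $t$ lies inside a joining word or a partial block. Uniform boundedness of $f$ by $1$ absorbs these boundary effects into the error terms $L_*(t)$ and $p\,r_*(t)$.
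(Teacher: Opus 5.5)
Your proposal is correct. For the scale estimates and the tracking estimate it is essentially the paper's argument. The paper also bounds $|\overline M(t)-E_\beta(t)|$ in source coordinates using $\sum_{v\le w}\varepsilon_{r(h,v)}=\lfloor A_{h,w}\rfloor$ stage by stage. It then passes to the concatenated sequence via $U(t)=t-V(t)$, which is your $t'$, using $M_{\mathbf I}(t)=\overline M(U(t))$, $V(t)\le p\,r_*(t)$, $|E_\beta(t)-E_\beta(U(t))|\le V(t)$, and monotonicity of $L_*$ and $r_*$. You use that monotonicity tacitly when replacing $L_*(t')$ by $L_*(t)$.

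The one genuine difference is the construction of $\Pi$, and there your version is the sound one. The paper chooses $W_r(\mathbf I)=\mathscr W(I_r,I_{r+1})$, depending only on the two adjacent blocks, and then asserts that every finite prefix of $I_1W_1I_2W_2\cdots$ is admissible. Weak specification only gives $I_rW_rI_{r+1}\in\cL(X)$ for each $r$, and this does not propagate in general. For example, in the even shift, $10$ joins to $00$ and $00$ joins to $11$ with empty words, yet $100011$ is not admissible.

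Your recursive choice $W_r=W(P_r,I_{r+1})$, with $P_r$ the entire prefix built so far, makes every $P_r$ admissible by construction. The price is that $W_r$ depends on $I_1,\ldots,I_{r+1}$ rather than on two blocks only. This is exactly the dependence recorded in the paper's next lemma. The cube-mass estimate there only fixes the joining-length pattern $\boldsymbol\ell$ and then discards it, so nothing downstream changes.
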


\begin{proof}
We first construct the map $\Pi$ and the probability measure $\lambda$ on $K$.
Let $\cL_{\le p}(X)$ be the finite set of admissible words of length at most $p$. Weak specification implies that, for every $U,V\in\cL(X)$, the set
\[
  \{W\in\cL_{\le p}(X):UWV\in\cL(X)\}
\]
is nonempty. For each pair $(U,V)$, choose one word from this set and denote it by
$\mathscr W(U,V)$. This choice is fixed throughout the construction.

Fix $\mathbf I=(I_r)_{r\ge1}\in\Omega$. For $r\ge1$, denote
\begin{equation}\label{eq:joining-word-choice}
  W_r(\mathbf I)=\mathscr W(I_r,I_{r+1}).
\end{equation}
Every finite
prefix of the following infinite word is admissible, and therefore
\[
  \Pi(\mathbf I)
  =I_1W_1(\mathbf I)I_2W_2(\mathbf I)I_3W_3(\mathbf I)\cdots\in X.
\]
For every $n$, the first $n$ digits of $\Pi(\mathbf I)$ depend on only finitely many
coordinates $I_r$ of $\mathbf I$. Hence the inverse image under $\Pi$ of every cylinder in
$X$ is open in $\Omega$. Thus $\Pi$ is continuous and, in particular, Borel measurable.
The composition $R\circ\Pi:\Omega\to K$ is Borel measurable. With $\lambda$ given by
\eqref{eq:lambda-definition}, $\lambda$ is therefore a Borel probability measure on $K$.
\smallskip

For the estimates, fix $\mathbf I\in\Omega$ and abbreviate
$W_r=W_r(\mathbf I)$, $S_j=S_j(\mathbf I)$,
$\mathcal I_r=\mathcal I_r(\mathbf I)$, and $M(t)=M_{\mathbf I}(t)$.
Since $R_h=L_h=2^h$,
\begin{equation*}
  \overline S_j=\sum_{h=1}^jR_hL_h=\sum_{h=1}^j4^h\asymp4^j.
\end{equation*}
For $t\ge1$, choose $j\ge1$ so that
\(\overline S_{j-1}\le t<\overline S_j\). Then
\[
  r_*(t)\le Q_j\le2^{j+1},
  \qquad
  L_*(t)\le L_j=2^j.
\]
If $j\ge2$, then $t\ge\overline S_{j-1}\ge4^{j-1}$, and hence
\begin{equation}\label{eq:block-index-scales}
  r_*(t)=O(t^{1/2}),
  \qquad
  L_*(t)=O(t^{1/2}).
\end{equation}
By its definition,
\[
  S_j=\overline S_j+\sum_{r=1}^{Q_j-1}|W_r|.
\]
Consequently,
\begin{equation}\label{eq:joined-stage-scale}
  \overline S_j\le S_j\le\overline S_j+pQ_j,
  \qquad
  S_j\asymp4^j.
\end{equation}
Equations \eqref{eq:block-index-scales} and \eqref{eq:joined-stage-scale} prove
\eqref{eq:block-scales}. 
\smallskip 

Inserting joining words only moves a source block to the right.
Therefore, if $I_r$ meets one of the first $t$ positions of the concatenated sequence, then $r\le r_*(t)$. By the
definition of $\mathcal I_r$,
\begin{equation*}
  0\le\min\mathcal I_r-(\overline s_{r-1}+1)
  =\sum_{u<r}|W_u|\le p(r-1).
\end{equation*}
For $t\in\NN$, let $\overline M(t)$ be the number of digits
belonging to $\xi$-blocks among the first $t$ positions before the joining words are
inserted, and set $\overline M(0)=0$. For $t\ge1$, choose the unique integers
$j\ge1$, $0\le u<R_j$, and $0\le q<L_j$ such that
\[
  t=\overline S_{j-1}+uL_j+q.
\]
With
$T=\overline S_{j-1}+uL_j$, and using $r(h,v)=Q_{h-1}+v$ from
\eqref{eq:stage-block-index}, we have
\begin{align*}
  \overline M(t)
  & =\sum_{h<j}L_h\sum_{v=1}^{R_h}\varepsilon_{r(h,v)}
     +L_j\sum_{v=1}^u\varepsilon_{r(j,v)}
     +q\varepsilon_{r(j,u+1)},\\
  E_\beta(t)
  & =\sum_{h<j}L_h\sum_{v=1}^{R_h}x_{r(h,v)}
     +L_j\sum_{v=1}^ux_{r(j,v)}
     +\sum_{n=T+1}^{T+q}f(n).
\end{align*}
For every $h\ge1$ and $0\le w\le R_h$, summing \eqref{eq:binary-schedule} gives
\[
  \sum_{v=1}^w\varepsilon_{r(h,v)}=\lfloor A_{h,w}\rfloor,
  \qquad
  \left|
    \sum_{v=1}^w\varepsilon_{r(h,v)}
    -\sum_{v=1}^w x_{r(h,v)}
  \right|
  =\bigl|\lfloor A_{h,w}\rfloor-A_{h,w}\bigr|<1.
\]
Apply this estimate with $w=R_h$ for every $h<j$, and then with $(h,w)=(j,u)$.
Since $0\le f\le1$,
\begin{equation}\label{eq:source-time-tracking}
  \left|\overline M(t)-E_\beta(t)\right|
  \le\sum_{h<j}L_h+L_j+q\le\sum_{h<j}L_h+2L_j\lesssim L_j\lesssim L_*(t).
\end{equation}

For an integer $t\ge1$, let $V(t)$ be the number of joining-word digits among the first
$t$ positions of the concatenated sequence, and
write $U(t)=t-V(t)$. Since the first position belongs to $I_1$, we have $U(t)\ge1$. By the
definitions of $M(t)$, $\overline M(t)$, and $U(t)$,
\[
  M(t)=\overline M(U(t)),
  \qquad
  1\le U(t)\le t.
\]
Moreover,
\[
  V(t)\le\sum_{r=1}^{r_*(U(t))}|W_r|\le p\,r_*(U(t)).
\]
Since $0\le f(n)\le1$,
\[
  \left|E_\beta(t)-E_\beta(U(t))\right|
  =\sum_{n=U(t)+1}^{t}f(n)
  \le t-U(t)=V(t).
\]
Applying \eqref{eq:source-time-tracking} and the monotonicity of $L_*$ and $r_*$ gives
\begin{align*}
  \left|M(t)-E_\beta(t)\right|
  &\le\left|\overline M(U(t))-E_\beta(U(t))\right|
      +\left|E_\beta(U(t))-E_\beta(t)\right|\\
  &\lesssim L_*(U(t))+p\,r_*(U(t))\\
  &\lesssim L_*(t)+p\,r_*(t),
\end{align*}
which proves \eqref{eq:M-tracking} uniformly for $\mathbf I\in\Omega$.
\end{proof}

\subsection{The mass for approximate cubes}

Lemma~\ref{lem:source-product-measure} makes the source blocks independent, whereas each
joining word inserted in Lemma~\ref{lem:scheduled-measure} depends on the two source blocks it
joins. Their lengths, however, take only $p+1$ possible values. We therefore first fix the
entire joining-length pattern and then drop the requirement that the joining words have the
prescribed lengths, which can only enlarge the event. The remaining conditions concern distinct
source blocks and, by their independence under the original product measure, the probability of
the enlarged event is the product of the corresponding probabilities.

For the sets $C_r$ constructed in Lemma~\ref{lem:source-product-measure}, write
$B_{r,i}=\#\tau_i(C_r)$ for $r\ge1$ and $1\le i\le s$.

\begin{lemma}
Let $\lambda$ be the probability measure constructed in
Lemma~\ref{lem:scheduled-measure} from the sets $C_r$ given by
Lemma~\ref{lem:source-product-measure}. Fix an integer $k\ge1$. For $1\le i\le s$, let
$
  I_i(k)=(\lfloor\theta_{i-1}k\rfloor,\lfloor\theta_i k\rfloor]\cap\NN.
$
Write also $r_k=r_*(k)$.
Let
\[
  \mathfrak L_k
  =\left\{
    (|W_1(\mathbf I)|,\ldots,|W_{r_k}(\mathbf I)|):
    \mathbf I\in\Omega
   \right\},
\]
where $W_r(\mathbf{I})$ is defined in \eqref{eq:joining-word-choice}.
For $\boldsymbol\ell=(\ell_1,\ldots,\ell_{r_k})\in\mathfrak L_k$ and
$1\le r\le r_k$, define the left and right endpoints of the $r$-th source block by
\begin{equation}\label{eq:length-pattern-block-position}
  a_r(\boldsymbol\ell)=\overline s_{r-1}+\sum_{u=1}^{r-1}\ell_u,
  \qquad
  b_r(\boldsymbol\ell)=a_r(\boldsymbol\ell)+L(r),
\end{equation}
and let
$\mathcal I_r(\boldsymbol\ell)
=(a_r(\boldsymbol\ell),b_r(\boldsymbol\ell)]\cap\NN$.
For $1\le i\le s$, let
$
  \mathcal B_{i,k}(\boldsymbol\ell)
  =\{1\le r\le r_k:\mathcal I_r(\boldsymbol\ell)\subset I_i(k)\}.
$
Then
\begin{equation}\label{eq:number-length-patterns}
  \#\mathfrak L_k\le(p+1)^{r_k}.
\end{equation}
Moreover, for every $z\in K$,
\begin{equation}\label{eq:geometric-product}
  \lambda(Q_k(z))
  \le2^d(p+1)^{r_k}
  \max_{\boldsymbol\ell\in\mathfrak L_k}
  \prod_{i=1}^s
  \prod_{r\in\mathcal B_{i,k}(\boldsymbol\ell)}B_{r,i}^{-1}.
\end{equation}
\end{lemma}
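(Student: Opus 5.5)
The bound on $\#\mathfrak L_k$ is immediate: each $|W_r(\mathbf I)|$ lies in $\{0,1,\ldots,p\}$, so the vector of the first $r_k$ joining lengths takes at most $(p+1)^{r_k}$ values. For \eqref{eq:geometric-product}, I start from \eqref{eq:encoding-interface}. Since $\lambda=(R\circ\Pi)_*\PP$, we have $\lambda(Q_k(z))=\PP\{\mathbf I:R(\Pi(\mathbf I))\in Q\}$ with $Q=Q_k(z)$. Moreover $R^{-1}(Q)\cap X$ is covered by at most $2^d$ sets $[\Gamma_k(x^{(\ell)})]$. It therefore suffices to show, for one fixed $x\in X$,
\[
\PP\{\mathbf I:\Pi(\mathbf I)\in[\Gamma_k(x)]\}\le(p+1)^{r_k}\max_{\boldsymbol\ell}\prod_i\prod_{r\in\mathcal B_{i,k}(\boldsymbol\ell)}B_{r,i}^{-1}.
\]

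Next I split according to the joining-length pattern. Write the event as a disjoint union over $\boldsymbol\ell\in\mathfrak L_k$ of the events $\{(|W_1(\mathbf I)|,\ldots,|W_{r_k}(\mathbf I)|)=\boldsymbol\ell\}\cap\{\Pi(\mathbf I)\in[\Gamma_k(x)]\}$. On the piece indexed by $\boldsymbol\ell$, the block $I_r$ with $r\le r_k$ occupies exactly the positions $\mathcal I_r(\boldsymbol\ell)$ of $\Pi(\mathbf I)$; this follows from \eqref{eq:length-pattern-block-position}. Membership in $[\Gamma_k(x)]$ requires $\tau_i(\Pi(\mathbf I)|\lfloor\theta_ik\rfloor)=\tau_i(x|\lfloor\theta_ik\rfloor)$ for every $i$. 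In particular, restricting to positions in $I_i(k)$, for each $r\in\mathcal B_{i,k}(\boldsymbol\ell)$ we get $\tau_i(I_r)=\tau_i\bigl(x|_{a_r(\boldsymbol\ell)}^{b_r(\boldsymbol\ell)}\bigr)$. I then enlarge the event by discarding the condition on the joining lengths and every other constraint. What is left is the event that, for every $i$ and every $r\in\mathcal B_{i,k}(\boldsymbol\ell)$, the block $I_r$ has a prescribed $\tau_i$-image.

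The sets $\mathcal B_{i,k}(\boldsymbol\ell)$ for different $i$ are disjoint, because the intervals $I_i(k)$ are disjoint. So each block index $r$ receives at most one constraint, and it concerns a single level $i$. By independence under the product measure $\PP$ (see \eqref{eq:source-product-cylinders}), the probability of the enlarged event factors over $r$. For each constrained block, the uniform-fibre property and \eqref{eq:exact-factor-mass} give $\nu_r\{I\in C_r:\tau_i(I)=J\}\le 1/B_{r,i}$; the probability is zero if $J\notin\tau_i(C_r)$. Hence the $\boldsymbol\ell$-piece has probability at most $\prod_i\prod_{r\in\mathcal B_{i,k}(\boldsymbol\ell)}B_{r,i}^{-1}$. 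Summing over at most $(p+1)^{r_k}$ patterns and at most $2^d$ sets $\Gamma_k(x^{(\ell)})$ gives \eqref{eq:geometric-product}.

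The main point needing care is justifying that on the pattern event the block positions are exactly $\mathcal I_r(\boldsymbol\ell)$. Every block contained in $[1,k]$ has index at most $r_k=r_*(k)$, since joining words only shift blocks rightwards, so only the first $r_k$ joining lengths matter. The other point is making sure the dependence of $W_r$ on $(I_r,I_{r+1})$ is eliminated entirely by enlarging the event before independence is invoked.
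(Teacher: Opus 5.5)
Your proposal is correct and follows the paper's proof essentially step for step: you partition $\Omega$ by the joining-length pattern $\boldsymbol\ell$, enlarge each piece to the intersection of the single-block events $\{\tau_i(I_r)=\tau_i(x|_{a_r(\boldsymbol\ell)}^{b_r(\boldsymbol\ell)})\}$ over the disjoint index sets $\mathcal B_{i,k}(\boldsymbol\ell)$, apply product independence and the uniform-fibre bound $B_{r,i}^{-1}$, and then sum over at most $(p+1)^{r_k}$ patterns and the at most $2^d$ sets from \eqref{eq:encoding-interface}. The two points you flag as needing care are exactly the ones the paper addresses: block positions are fixed on each pattern event for $r\le r_k$, and the length constraint must be discarded before independence is invoked.
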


\begin{proof}

Fix $\mathbf I\in\Omega$. If the source block $I_r$ or the joining word $W_r(\mathbf I)$
meets the first $k$ positions of the concatenated sequence, then $r\le r_k$. Consequently,
every source block contained in one of the intervals $I_i(k)$ has index at most $r_k$, and
its location is determined by
$|W_1(\mathbf I)|,\ldots,|W_{r_k}(\mathbf I)|$. Since this holds for every
$\mathbf I\in\Omega$ and every joining-word length belongs to $\{0,\ldots,p\}$, there are at
most $(p+1)^{r_k}$ possible length patterns. This proves
\eqref{eq:number-length-patterns}.

\smallskip

For $\boldsymbol\ell\in\mathfrak L_k$, let
\[
  \Omega_{\boldsymbol\ell}
  =\left\{
    \mathbf I=(I_u)_{u\ge1}\in\Omega:
    (|W_1(\mathbf I)|,\ldots,|W_{r_k}(\mathbf I)|)
    =\boldsymbol\ell
   \right\}.
\]
The word $W_r(\mathbf I)$ depends only on $I_1,\ldots,I_{r+1}$, so every
$\Omega_{\boldsymbol\ell}$ is a Borel subset of $\Omega$.
These sets are pairwise disjoint and their union is the whole product probability space:
\begin{equation}\label{eq:length-event-partition}
  \Omega=\bigsqcup_{\boldsymbol\ell\in\mathfrak L_k}
  \Omega_{\boldsymbol\ell}.
\end{equation}
By \eqref{eq:length-pattern-block-position}, for every
$\boldsymbol\ell\in\mathfrak L_k$, $\mathbf I\in\Omega_{\boldsymbol\ell}$, and
$1\le r\le r_k$, the $r$-th source block occupies the interval
$\mathcal I_r(\boldsymbol\ell)$ in the concatenated sequence.
For $r\ge1$, let $\operatorname{pr}_r:\Omega\to C_r$ be the $r$-th coordinate projection,
so that $\operatorname{pr}_r((I_u)_{u\ge1})=I_r$.
Fix $x\in X$ and $\boldsymbol\ell\in\mathfrak L_k$.
For $1\le i\le s$ and $r\in\mathcal B_{i,k}(\boldsymbol\ell)$, let
\[
  A_{r,i}(x,\boldsymbol\ell)
  =\left\{
    \mathbf I=(I_u)_{u\ge1}\in\Omega:
    \tau_i(\operatorname{pr}_r(\mathbf I))
    =\tau_i\bigl(x|_{a_r(\boldsymbol\ell)}^{b_r(\boldsymbol\ell)}\bigr)
   \right\}.
\]
Thus $A_{r,i}(x,\boldsymbol\ell)$ is the set of source-block sequences for which the
$i$-th factor of the $r$-th source block agrees with the corresponding factor word of $x$.
The definition of $\Gamma_k(x)$ in \eqref{eq:symbolic-cube} gives
\begin{equation*}
  \Pi^{-1}([\Gamma_k(x)])\cap\Omega_{\boldsymbol\ell}
  \subseteq
  \bigcap_{i=1}^s\ \bigcap_{r\in\mathcal B_{i,k}(\boldsymbol\ell)}
  A_{r,i}(x,\boldsymbol\ell).
\end{equation*}
Indeed, let $\mathbf I$ belong to the set on the left, and fix $1\le i\le s$ and
$r\in\mathcal B_{i,k}(\boldsymbol\ell)$. Since $\mathbf I\in\Omega_{\boldsymbol\ell}$, the
$r$-th source block occupies the interval
$(a_r(\boldsymbol\ell),b_r(\boldsymbol\ell)]$. This interval is contained in $I_i(k)$ by the
definition of $\mathcal B_{i,k}(\boldsymbol\ell)$. On the other hand, since
$\Pi(\mathbf I)\in[\Gamma_k(x)]$, we have
\[
  \tau_i\bigl(\Pi(\mathbf I)|_{\lfloor\theta_i k\rfloor}\bigr)
  =\tau_i\bigl(x|_{\lfloor\theta_i k\rfloor}\bigr).
\]
Restricting this equality to $(a_r(\boldsymbol\ell),b_r(\boldsymbol\ell)]$ yields
\[
  \tau_i(I_r)
  =\tau_i\bigl(x|_{a_r(\boldsymbol\ell)}^{b_r(\boldsymbol\ell)}\bigr).
\]
Thus $\mathbf I\in A_{r,i}(x,\boldsymbol\ell)$, which proves the inclusion.

The intervals $I_i(k)$ are disjoint, so each block index $r$ belongs to at most one
set $\mathcal B_{i,k}(\boldsymbol\ell)$. Consequently, the sets
$A_{r,i}(x,\boldsymbol\ell)$ in the intersection are determined by pairwise distinct
coordinates of $\Omega$, and, by \eqref{eq:source-product-cylinders}, their probabilities
multiply under $\PP$. The $r$-th marginal
of $\PP$ is uniform on $C_r$, so \eqref{eq:exact-factor-mass} gives

\[
  \PP(A_{r,i}(x,\boldsymbol\ell))
  \le B_{r,i}^{-1};
\]
the left-hand side is $0$ if the required factor word is not in $\tau_i(C_r)$, and otherwise
it equals $B_{r,i}^{-1}$. Hence
\begin{align*}
  \PP\!\left(
    \Pi^{-1}([\Gamma_k(x)])\cap\Omega_{\boldsymbol\ell}
  \right)
  &\le
  \PP\!\left(
    \bigcap_{i=1}^s\ \bigcap_{r\in\mathcal B_{i,k}(\boldsymbol\ell)}
    A_{r,i}(x,\boldsymbol\ell)
  \right)\notag\\
  &=\prod_{i=1}^s
    \prod_{r\in\mathcal B_{i,k}(\boldsymbol\ell)}
    \PP(A_{r,i}(x,\boldsymbol\ell))\notag\\
  &\le\prod_{i=1}^s
    \prod_{r\in\mathcal B_{i,k}(\boldsymbol\ell)}B_{r,i}^{-1}.
\end{align*}
Summing over the partition in \eqref{eq:length-event-partition} and using
\eqref{eq:number-length-patterns} gives
\begin{align}
  \PP\bigl(\Pi^{-1}([\Gamma_k(x)])\bigr)
  &=\sum_{\boldsymbol\ell\in\mathfrak L_k}
    \PP\!\left(
      \Pi^{-1}([\Gamma_k(x)])\cap\Omega_{\boldsymbol\ell}
    \right)\notag\\
  &\le\sum_{\boldsymbol\ell\in\mathfrak L_k}
    \prod_{i=1}^s
    \prod_{r\in\mathcal B_{i,k}(\boldsymbol\ell)}B_{r,i}^{-1}\notag\\
  &\le(p+1)^{r_k}
    \max_{\boldsymbol\ell\in\mathfrak L_k}
    \prod_{i=1}^s
    \prod_{r\in\mathcal B_{i,k}(\boldsymbol\ell)}B_{r,i}^{-1}.
                                                        \label{eq:symbolic-pattern-sum}
\end{align}

Fix $z\in K$ and write $Q=Q_k(z)$. By \eqref{eq:encoding-interface}, there are
$x^{(1)},\ldots,x^{(L)}\in X$, with $L\le2^d$, such that
\[
  R^{-1}(Q)\cap X
  \subseteq\bigcup_{h=1}^L[\Gamma_k(x^{(h)})].
\]
By \eqref{eq:lambda-definition},
$\lambda(Q)=\PP((R\circ\Pi)^{-1}(Q))$. Therefore
\[
  \lambda(Q)
  \le\sum_{h=1}^L
  \PP\!\left(\Pi^{-1}([\Gamma_k(x^{(h)})])\right).
\]
Applying \eqref{eq:symbolic-pattern-sum} to each $x^{(h)}$ and using $L\le2^d$ proves
\eqref{eq:geometric-product}.
\end{proof}

We now turn the product estimate in \eqref{eq:geometric-product} into an entropy lower bound.
Applying Corollary~\ref{cor:block-construction} block by block yields the estimate in the
following lemma.

\begin{lemma}
For the measure $\lambda$ constructed in Lemma~\ref{lem:scheduled-measure}, the following
estimate holds uniformly for $z\in K$ as $k\to\infty$:
\begin{align}
  -\log\lambda(Q_k(z))
  &\ge \sum_{i=1}^s(\lfloor\theta_i k\rfloor-\lfloor\theta_{i-1}k\rfloor)
       h(\tau_i\eta)\notag\\
  &\quad+\sum_{i=1}^s\Delta_i
  \bigl(E_\beta(\lfloor\theta_i k\rfloor)-E_\beta(\lfloor\theta_{i-1}k\rfloor)\bigr)
  -o(k^\beta).                                         \label{eq:master-information}
\end{align}
Here $\Delta_i=h(\tau_i\xi)-h(\tau_i\eta)$.
\end{lemma}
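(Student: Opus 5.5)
Starting from \eqref{eq:geometric-product}, take logarithms. Since $r_k=O(k^{1/2})$ by \eqref{eq:block-scales} and $\beta>1/2$, the prefactor $2^d(p+1)^{r_k}$ contributes only $O(k^{1/2})=o(k^\beta)$. It therefore suffices to prove, uniformly in $\boldsymbol\ell\in\mathfrak L_k$, the bound
\[
  \sum_{i=1}^s\sum_{r\in\mathcal B_{i,k}(\boldsymbol\ell)}\log B_{r,i}
  \ge\sum_{i=1}^s\sum_{n\in I_i(k)}\bigl(h(\tau_i\eta)+\Delta_i f(n)\bigr)-o(k^\beta).
\]
The first step discards boundary effects. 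For fixed $i$, the blocks meeting $I_i(k)$ but not contained in it are at most two. Each has length at most $L_*(k)=O(k^{1/2})$. The joining digits inside $I_i(k)$ number at most $p\,r_k=O(k^{1/2})$. Hence the positions of $I_i(k)$ covered by blocks in $\mathcal B_{i,k}(\boldsymbol\ell)$ miss only $O(k^{1/2})$ positions, and since $f\le1$ and the entropies are bounded, changing the right-hand side on these positions costs $O(k^{1/2})$.

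The second step groups the blocks in $\mathcal B_{i,k}(\boldsymbol\ell)$ by stage $j$ and by type $\omega\in\{\eta,\xi\}$. Within a stage all blocks have length $L_j$. The blocks of type $\omega$ in stage $j$ lying in $\mathcal B_{i,k}$ form a consecutive run $a<v\le b$ in the enumeration $r_{j,\omega}(\cdot)$, because $\mathcal B_{i,k}$ consists of consecutive indices. By \eqref{eq:type-discrepancy}, each set $C_{j,\omega,q}$ occurs $\rho_{j,\omega,q}(b-a)+O(s)$ times in this run. Corollary~\ref{cor:block-construction} with $N=L_j$ then gives
\[
  \sum_{r}\log B_{r,i}\ge (b-a)\bigl(L_jh(\tau_i\omega)-O(\log L_j)\bigr)-O(sL_j),
\]
using $\log B_{r,i}\le L_j\log\#\cA$ for the discrepancy term. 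At most $O(\log k)$ stages meet $[1,k]$, so the $O(sL_j)$ errors total $O(k^{1/2})$. The $O(\log L_j)$ errors total $O(r_k\log k)=O(k^{1/2}\log k)$. Both are $o(k^\beta)$. Summing over all groups shows that $\sum_{r\in\mathcal B_{i,k}}\log B_{r,i}$ is at least $h(\tau_i\eta)$ times the number of $\eta$-digits in $I_i(k)$, plus $h(\tau_i\xi)$ times the number of $\xi$-digits in $I_i(k)$, minus $o(k^\beta)$. Up to $O(k^{1/2})$ joining and boundary digits, this equals
\[
  (\#I_i(k))\,h(\tau_i\eta)+\Delta_i\bigl(M_{\mathbf I}(\lfloor\theta_ik\rfloor)-M_{\mathbf I}(\lfloor\theta_{i-1}k\rfloor)\bigr).
\]

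The final step replaces $M_{\mathbf I}$ by $E_\beta$ using \eqref{eq:M-tracking}. That error is $O(L_*(k)+p\,r_k)=O(k^{1/2})$, which again is $o(k^\beta)$, and this yields \eqref{eq:master-information}. The main obstacle is that $\Delta_i$ may be negative. The lower bound on the $\xi$-digit count must therefore be two-sided, and it is: \eqref{eq:M-tracking} is an absolute-value estimate. The other point needing care is that all errors are uniform over $\boldsymbol\ell$ and $z$. This holds because every bound above depends only on $k$, $p$, and the block schedule, and not on the particular $\mathbf I$.
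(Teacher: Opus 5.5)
Your proposal is correct and follows essentially the same route as the paper. You take logarithms in \eqref{eq:geometric-product}, split $\mathcal B_{i,k}(\boldsymbol\ell)$ by stage and type into consecutive runs handled by \eqref{eq:type-discrepancy} and Corollary~\ref{cor:block-construction}, absorb the $O(p\,r_k+L_*(k))$ joining and boundary digits (the paper's $e_{0,i},e_{1,i}$), and pass from $M$ to $E_\beta$ using the two-sided estimate \eqref{eq:M-tracking}, all uniformly in $\boldsymbol\ell$. One small slip: the bound $\sum_j L_j\lesssim L_*(k)$ comes from the geometric growth of $L_j=2^j$, not from the $O(\log k)$ count of stages, although even the cruder bound $O(k^{1/2}\log k)$ would still be $o(k^\beta)$.
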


\begin{proof}
Fix $k\ge2$. Write $k_i=\lfloor\theta_i k\rfloor$ for $0\le i\le s$. For
$1\le i\le s$, write $D_i=k_i-k_{i-1}$ and
$I_i(k)=(k_{i-1},k_i]\cap\NN$. Let $r_k=r_*(k)$ and set
\begin{equation*}
  \mathcal E_k
  =r_k\log k+L_*(k)+p\,r_k+r_k\log(p+1).
\end{equation*}
By \eqref{eq:block-scales} and $\beta>1/2$,
\begin{equation}
  \frac{\mathcal E_k}{k^\beta}
  =O\bigl(k^{1/2-\beta}\log k\bigr)
  \longrightarrow0\qquad k\to\infty.                   \label{eq:total-error-small}
\end{equation}

Fix $1\le i\le s$, a stage $j\ge1$, and a measure $\omega\in\{\eta,\xi\}$. Let
$\mathcal J$ be the set of block indices in a consecutive segment of the sequence
$(r_{j,\omega}(v))_{1\le v\le N_{j,\omega}}$.
Thus, for some $0\le a<b\le N_{j,\omega}$,
$\mathcal J=\{r_{j,\omega}(v):a<v\le b\}$. Let $n=b-a=\#\mathcal J$ and, for
$1\le q\le m_{j,\omega}$, write
\[
  n_q=\#\{a<v\le b:u_{j,\omega}(v)=q\},
  \qquad
  B_{j,\omega,q,i}=\#\tau_i(C_{j,\omega,q}).
\]
Recall that $\cS_i=\tau_i(\cS)$ is the set of digits that occur in $X_i$.
Corollary~\ref{cor:block-construction}
and \eqref{eq:type-discrepancy} give
\begin{align*}
  \sum_{q=1}^{m_{j,\omega}}
     \rho_{j,\omega,q}\log B_{j,\omega,q,i}
  &\ge L_jh(\tau_i\omega)-O(\log L_j),\\
  |n_q-\rho_{j,\omega,q}n|&\le2s,\\
  \log B_{j,\omega,q,i}&\le L_j\log\#\cS_i.
\end{align*}
Therefore
\begin{align}
  \sum_{r\in\mathcal J}\log B_{r,i}
  &=\sum_{q=1}^{m_{j,\omega}}n_q\log B_{j,\omega,q,i}\notag\\
  &=n\sum_{q=1}^{m_{j,\omega}}
       \rho_{j,\omega,q}\log B_{j,\omega,q,i}
    +\sum_{q=1}^{m_{j,\omega}}
       (n_q-\rho_{j,\omega,q}n)\log B_{j,\omega,q,i}\notag\\
  &\ge nL_jh(\tau_i\omega)
    -O\bigl(n\log L_j\bigr)
    -2s\sum_{q=1}^{m_{j,\omega}}\log B_{j,\omega,q,i}\notag\\
  &\ge nL_jh(\tau_i\omega)
    -O\bigl(n\log L_j+L_j\bigr).         \label{eq:stage-information}
\end{align}

We now return to all factor levels. Fix $\boldsymbol\ell\in\mathfrak L_k$. For each
$1\le i\le s$, write
\[
  \ell_{\eta,i,k}(\boldsymbol\ell)
  =\sum_{\substack{r\in\mathcal B_{i,k}(\boldsymbol\ell)\\
                    \varepsilon_r=0}}L(r),
  \qquad
  \ell_{\xi,i,k}(\boldsymbol\ell)
  =\sum_{\substack{r\in\mathcal B_{i,k}(\boldsymbol\ell)\\
                    \varepsilon_r=1}}L(r).
\]
For $j\ge1$, $1\le i\le s$, and $\omega\in\{\eta,\xi\}$, let
\[
  \mathcal J_{j,\omega,i}(\boldsymbol\ell)
  =\begin{cases}
    \{r\in\mathcal B_{i,k}(\boldsymbol\ell):
      Q_{j-1}<r\le Q_j,\ \varepsilon_r=0\},&\omega=\eta,\\
    \{r\in\mathcal B_{i,k}(\boldsymbol\ell):
      Q_{j-1}<r\le Q_j,\ \varepsilon_r=1\},&\omega=\xi.
  \end{cases}
\]
If this set is nonempty, there are integers $a<b$ such that
\[
  \mathcal J_{j,\omega,i}(\boldsymbol\ell)
  =\{r_{j,\omega}(v):a<v\le b\}.
\]
Hence, for each fixed $\boldsymbol\ell\in\mathfrak L_k$,
the set $\mathcal J_{j,\omega,i}(\boldsymbol\ell)$ is a consecutive
segment of $(r_{j,\omega}(v))_{1\le v\le N_{j,\omega}}$. We may therefore take
$\mathcal J=\mathcal J_{j,\omega,i}(\boldsymbol\ell)$ in
\eqref{eq:stage-information}.
The index ranges $Q_{j-1}<r\le Q_j$ are disjoint for different
stages $j$, so summing over $j$ counts each block at most once.
For the fixed length pattern $\boldsymbol\ell$, let $j_k(\boldsymbol\ell)$ be the largest
$j$ for which a stage-$j$ block meets one of the first $k$ positions of the concatenated
sequence.
For $1\le j\le j_k(\boldsymbol\ell)$ and $\omega\in\{\eta,\xi\}$, write
\[
  n_{j,\omega,i}=\#\mathcal J_{j,\omega,i}(\boldsymbol\ell).
\]
If $n_{j,\omega,i}>0$, then a complete stage-$j$ block lies in $I_i(k)$, and hence
$L_j\le D_i\le k$. Therefore
\[ 
  \sum_{j=1}^{j_k(\boldsymbol\ell)}
  \sum_{\omega\in\{\eta,\xi\}}n_{j,\omega,i}\le r_k,
  \qquad
  \log L_j\le\log k,
  \qquad
  \sum_{j=1}^{j_k(\boldsymbol\ell)}L_j
  \lesssim L_*(k),
\]
where the last inequality follows from
$\sum_{j=1}^{j_k(\boldsymbol\ell)}L_j\lesssim L_{j_k(\boldsymbol\ell)}$ and
$L_{j_k(\boldsymbol\ell)}\le L_*(k)$. Moreover,
\[
  \sum_{j=1}^{j_k(\boldsymbol\ell)}n_{j,\eta,i}L_j
  =\ell_{\eta,i,k}(\boldsymbol\ell),
  \qquad
  \sum_{j=1}^{j_k(\boldsymbol\ell)}n_{j,\xi,i}L_j
  =\ell_{\xi,i,k}(\boldsymbol\ell).
\]
For fixed $i$ and $\boldsymbol\ell$, the sets
$\mathcal J_{j,\omega,i}(\boldsymbol\ell)$, over all $j$ and $\omega$, form a partition of
$\mathcal B_{i,k}(\boldsymbol\ell)$. Summing \eqref{eq:stage-information} over $j$ and
$\omega$ therefore gives
\begin{align}
  \sum_{r\in\mathcal B_{i,k}(\boldsymbol\ell)}\log B_{r,i}
  &=\sum_{j=1}^{j_k(\boldsymbol\ell)}
    \sum_{\omega\in\{\eta,\xi\}}
    \sum_{r\in\mathcal J_{j,\omega,i}(\boldsymbol\ell)}
    \log B_{r,i}\notag\\
  &\ge\sum_{j=1}^{j_k(\boldsymbol\ell)}
    \bigl(n_{j,\eta,i}L_jh(\tau_i\eta)
          +n_{j,\xi,i}L_jh(\tau_i\xi)\bigr)
  -O\left(
       \sum_{j=1}^{j_k(\boldsymbol\ell)}
       \sum_{\omega\in\{\eta,\xi\}}n_{j,\omega,i}\log L_j
       +\sum_{j=1}^{j_k(\boldsymbol\ell)}L_j
     \right)\notag\\
  &\ge h(\tau_i\eta)\ell_{\eta,i,k}(\boldsymbol\ell)
       +h(\tau_i\xi)\ell_{\xi,i,k}(\boldsymbol\ell)
  -O\bigl(r_k\log k+L_*(k)\bigr).  \label{eq:summed-stage-information}
\end{align}

The positions in $I_i(k)$ not counted by
$\ell_{\eta,i,k}+\ell_{\xi,i,k}$ lie in joining words or in the parts of at most two
source blocks cut by the endpoints. Thus
\begin{equation}\label{eq:window-source-loss}
  0\le D_i-\ell_{\eta,i,k}(\boldsymbol\ell)
             -\ell_{\xi,i,k}(\boldsymbol\ell)
  \le p\,r_k+2L_*(k).
\end{equation}

For a fixed length pattern $\boldsymbol\ell$, the values $M_{\mathbf I}(t)$, $1\le t\le k$,
are the same for all $\mathbf I\in\Omega_{\boldsymbol\ell}$; denote them by
$M_{\boldsymbol\ell}(t)$, and set $M_{\boldsymbol\ell}(0)=0$.

The difference
$M_{\boldsymbol\ell}(k_i)-M_{\boldsymbol\ell}(k_{i-1})$ counts the $\xi$-block digits in
$I_i(k)=(k_{i-1},k_i]\cap\NN$, including the parts of at most two blocks cut by the endpoints. Hence
\begin{equation}\label{eq:window-xi-loss}
  0\le M_{\boldsymbol\ell}(k_i)-M_{\boldsymbol\ell}(k_{i-1})
       -\ell_{\xi,i,k}(\boldsymbol\ell)
  \le2L_*(k).
\end{equation}
For $1\le i\le s$, write
\[
  e_{0,i}=D_i-\ell_{\eta,i,k}(\boldsymbol\ell)
                 -\ell_{\xi,i,k}(\boldsymbol\ell),
  \qquad
  e_{1,i}=M_{\boldsymbol\ell}(k_i)-M_{\boldsymbol\ell}(k_{i-1})
                 -\ell_{\xi,i,k}(\boldsymbol\ell).
\]
Equations \eqref{eq:window-source-loss} and \eqref{eq:window-xi-loss} give
\[
  0\le e_{0,i}\le p\,r_k+2L_*(k),
  \qquad
  0\le e_{1,i}\le2L_*(k).
\]
Since $h(\tau_i\xi)=h(\tau_i\eta)+\Delta_i$, the two main terms on the right-hand side of
\eqref{eq:summed-stage-information} are exactly
\begin{align*}
 &h(\tau_i\eta)\ell_{\eta,i,k}(\boldsymbol\ell)
   +h(\tau_i\xi)\ell_{\xi,i,k}(\boldsymbol\ell)\\
 &=h(\tau_i\eta)
   \bigl(\ell_{\eta,i,k}(\boldsymbol\ell)
         +\ell_{\xi,i,k}(\boldsymbol\ell)\bigr)
   +\Delta_i\ell_{\xi,i,k}(\boldsymbol\ell)\\
 &=h(\tau_i\eta)(D_i-e_{0,i})
   +\Delta_i\bigl(
      M_{\boldsymbol\ell}(k_i)-M_{\boldsymbol\ell}(k_{i-1})-e_{1,i}
    \bigr)\\
 &=D_i h(\tau_i\eta)
   +\Delta_i\bigl(M_{\boldsymbol\ell}(k_i)-M_{\boldsymbol\ell}(k_{i-1})\bigr)
   -h(\tau_i\eta)e_{0,i}-\Delta_i e_{1,i}.
\end{align*}
Since the entropies and the numbers $\Delta_i$ are fixed,
\eqref{eq:summed-stage-information} and the preceding bounds give
\begin{align}
  \sum_{r\in\mathcal B_{i,k}(\boldsymbol\ell)}\log B_{r,i}
  &\ge D_i h(\tau_i\eta)\notag\\
  &\quad+\Delta_i
    \bigl(M_{\boldsymbol\ell}(k_i)-M_{\boldsymbol\ell}(k_{i-1})\bigr)
    -O\bigl(r_k\log k+L_*(k)+p\,r_k\bigr). \label{eq:window-information}
\end{align}

Applying \eqref{eq:M-tracking} at both
endpoints and using $0\le k_{i-1}\le k_i\le k$ gives
\[
  \left|
  \bigl(M_{\boldsymbol\ell}(k_i)-M_{\boldsymbol\ell}(k_{i-1})\bigr)
  -\bigl(E_\beta(k_i)-E_\beta(k_{i-1})\bigr)
  \right|
  \lesssim L_*(k)+p\,r_k.
\]

Substitution in \eqref{eq:window-information} yields, uniformly in
$\boldsymbol\ell$,
\begin{align}
  \sum_{r\in\mathcal B_{i,k}(\boldsymbol\ell)}\log B_{r,i}
  &\ge D_i h(\tau_i\eta)
  +\Delta_i\bigl(E_\beta(k_i)-E_\beta(k_{i-1})\bigr)\notag\\
  &\quad-O\bigl(r_k\log k+L_*(k)+p\,r_k\bigr).
  \label{eq:window-E-information}
\end{align}

If $\lambda(Q_k(z))=0$, the conclusion is immediate, with
$-\log0=+\infty$. Otherwise, \eqref{eq:geometric-product} gives
\[
  -\log\lambda(Q_k(z))
  \ge
  \min_{\boldsymbol\ell\in\mathfrak L_k}
  \sum_{i=1}^s\sum_{r\in\mathcal B_{i,k}(\boldsymbol\ell)}
  \log B_{r,i}
  -r_k\log(p+1)-d\log2.
\]
For every $\boldsymbol\ell\in\mathfrak L_k$, summing
\eqref{eq:window-E-information} over $i$ yields
\begin{align*}
 \sum_{i=1}^s\sum_{r\in\mathcal B_{i,k}(\boldsymbol\ell)}\log B_{r,i}
 &\ge \sum_{i=1}^sD_i h(\tau_i\eta)\\
 &\quad+\sum_{i=1}^s\Delta_i\bigl(E_\beta(k_i)-E_\beta(k_{i-1})\bigr)
   -O\bigl(r_k\log k+L_*(k)+p\,r_k\bigr).
\end{align*}
The right-hand side is independent of $\boldsymbol\ell$.
Taking the minimum over $\boldsymbol\ell$ and adding the loss
$r_k\log(p+1)+d\log2$ gives a total error of order $O(\mathcal E_k)$.
Now $\mathcal E_k=o(k^\beta)$ by \eqref{eq:total-error-small}, which proves
\eqref{eq:master-information}.
\end{proof}

\subsection{Completion of the proof}

\begin{proof}[Proof of Proposition~\ref{thm:measure-construction}]
Let $\lambda$ be the probability measure constructed in
Lemma~\ref{lem:scheduled-measure}. Set
\begin{equation*}
  A=\sum_{i=1}^s
  \Delta_i(\theta_i^\beta-\theta_{i-1}^\beta)>0.
\end{equation*}
The function $x\mapsto(x+1)^{\beta-1}$ is decreasing. Hence, for every integer $N\ge1$,
\begin{align*}
  \int_1^{N+1}(x+1)^{\beta-1}\,dx
  \le E_\beta(N)
  \le\int_0^N(x+1)^{\beta-1}\,dx.
\end{align*}
Evaluating the two integrals gives
\begin{equation}\label{eq:Ebeta-asymptotic}
  E_\beta(N)=\frac{N^\beta}{\beta}+O(1).
\end{equation}
By \eqref{eq:Ebeta-asymptotic}, the identities
$\lfloor\theta_i k\rfloor=\theta_i k+O(1)$ and $E_\beta(0)=0$ give
\begin{equation}\label{eq:gain-main}
 \sum_{i=1}^s\Delta_i
  \bigl(E_\beta(\lfloor\theta_i k\rfloor)
   -E_\beta(\lfloor\theta_{i-1}k\rfloor)\bigr)
 =\frac{k^\beta}{\beta}
    \sum_{i=1}^s\Delta_i(\theta_i^\beta-\theta_{i-1}^\beta)+O(1)
 =\frac{A}{\beta}k^\beta+O(1).
\end{equation}
Also, by \eqref{eq:theta-a} and condition \textup{(ii)},
\begin{equation}
 \sum_{i=1}^s
  (\lfloor\theta_i k\rfloor-\lfloor\theta_{i-1}k\rfloor)h(\tau_i\eta)
 =k\sum_{i=1}^s(\theta_i-\theta_{i-1})h(\tau_i\eta)+O(1)
 =kP^{\mathbf a}(\sigma)+O(1).           \label{eq:baseline-main}
\end{equation}
Equations \eqref{eq:master-information}, \eqref{eq:gain-main}, and
\eqref{eq:baseline-main} give
\[
  -\log\lambda(Q_k(z))
  \ge kP^{\mathbf a}(\sigma)+\frac{A}{\beta}k^\beta-o(k^\beta).
\]
Here the $o(k^\beta)$ term is uniform for $z\in K$. Hence there is $k_0$ such that,
for every $z\in K$ and $k\ge k_0$, the absolute value of this error is at most
$Ak^\beta/(2\beta)$. Then
\[
  \log\lambda(Q_k(z))+kP^{\mathbf a}(\sigma)
  \le-\frac{A}{2\beta}k^\beta.
\]
Thus \eqref{eq:cube-mass-decay} holds with $c_0=A/(2\beta)$.
\end{proof}

\section{Proof of Proposition~\ref{thm:pressure-positive}}\label{sec:pressure-inequality}

Recall that $\eta=\eta_{\mathbf a}$ is the unique $\mathbf a$-weighted equilibrium state
from Proposition~\ref{prop:known-results}\textup{(ii)}. Thus
$P^{\mathbf a}(\sigma)=\sum_{i=1}^s a_i h(\tau_i\eta)$. We prove that
$\dimH K<\dimB K$ implies $c(\beta)>0$ for some $\beta\in(1/2,1)$. Suppose instead that
$c$ vanishes on an open interval. We show that $\eta$ is then the equilibrium state for an
open family of weights. The resulting uniform fibre estimates imply
$\dimH K=\dimB K$, a contradiction.

\subsection{Common equilibrium states}

We first show that if $c$ vanishes on an open interval, then $\eta$ is the
equilibrium state for an open family of weights.

For a positive probability weight vector $\mathbf b=(b_1,\ldots,b_s)$, write
\[
  A_i(\mathbf b)=\sum_{\ell=1}^i b_\ell,
  \qquad 0<A_1(\mathbf b)<\cdots<A_s(\mathbf b)=1.
\]
For $\mathbf b=\mathbf a(\beta)$, where $\mathbf a(\beta)$ is defined in
\eqref{eq:power-weight},
\begin{equation}\label{eq:power-cumulative}
  A_i(\mathbf a(\beta))=\theta_i^\beta.
\end{equation}

\begin{lemma}\label{lem:affine-span}
Let $I\subset(0,1)$ be a nonempty open interval. There are distinct
$\beta_1,\ldots,\beta_s\in I$ such that
$\mathbf a(\beta_1),\ldots,\mathbf a(\beta_s)$ are affinely independent. More explicitly,
if real numbers $t_1,\ldots,t_s$ satisfy
\[
  \sum_{j=1}^s t_j=0,
  \qquad
  \sum_{j=1}^s t_j\mathbf a(\beta_j)=0,
\]
then $t_1=\cdots=t_s=0$.
\end{lemma}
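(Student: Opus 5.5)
Pass to cumulative coordinates. The map $\mathbf b\mapsto(A_1(\mathbf b),\ldots,A_s(\mathbf b))$ is a linear bijection of $\RR^s$, and it preserves both relations in the hypothesis, because $\sum_j t_j\mathbf a(\beta_j)=0$ holds if and only if $\sum_j t_jA_i(\mathbf a(\beta_j))=0$ for every $i$. By \eqref{eq:power-cumulative}, $A_i(\mathbf a(\beta))=\theta_i^\beta$, and $A_s\equiv1$. So the hypothesis says
\[
  \sum_{j=1}^s t_j=0,\qquad \sum_{j=1}^s t_j\theta_i^{\beta_j}=0\quad(1\le i\le s-1).
\]
The $s$ relations $\sum_j t_j\theta_i^{\beta_j}=0$ for $0\le i\le s-1$ can be written uniformly if we set $\phi_0=1$, so that $\phi_0^{\beta}=1$. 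It therefore suffices to choose $\beta_1,\ldots,\beta_s\in I$ with
\[
  \det\bigl(\phi_i^{\beta_j}\bigr)_{0\le i\le s-1,\,1\le j\le s}\ne0,
  \qquad \phi_0=1,\ \phi_i=\theta_i\ (1\le i\le s-1).
\]

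Put $y_i=\log\phi_i$. Then $y_0=0$, and $y_1<\cdots<y_{s-1}<0$ are distinct because $0<\theta_1<\cdots<\theta_{s-1}<1$. The matrix entries become $e^{\beta_jy_i}$. This is a generalized Vandermonde matrix, and I would handle it with the standard fact that exponential sums with distinct frequencies have few zeros. Concretely, any nonzero function $F(\beta)=\sum_{i=0}^{s-1}c_ie^{y_i\beta}$ has at most $s-1$ real zeros. This is proved by induction on the number of terms: multiply by $e^{-y_0\beta}$, differentiate to remove one term, and apply Rolle's theorem.

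Now suppose that for every choice of distinct $\beta_1,\ldots,\beta_s\in I$ the matrix is singular. Then some nonzero coefficient vector $c$ gives an $F$ vanishing at all $s$ points, contradicting the zero bound. In fact this argument gives more: every choice of $s$ distinct $\beta_j$ works. If one prefers a non-inductive route, the determinant is a nontrivial real-analytic function of $(\beta_1,\ldots,\beta_s)$, so it is nonzero at some point of $I^s$.

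The main obstacle is the zero-counting step, specifically handling $c_0\ne0$ versus $c_0=0$ in the induction, together with keeping the cumulative-coordinate translation exact, in particular that the row $A_s\equiv1$ is what encodes the constraint $\sum_j t_j=0$. Once the determinant is nonzero, $t=0$ follows immediately.
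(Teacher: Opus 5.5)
Your argument is correct, but it takes a different route from the paper's proof.

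\textbf{The paper's route.} The paper also passes to cumulative coordinates. It then argues locally with $F(\beta)=(\theta_1^\beta,\ldots,\theta_{s-1}^\beta)^{\mathsf T}$. First it shows that the derivative matrix $\bigl(F'(\beta_0),\ldots,F^{(s-1)}(\beta_0)\bigr)$ has nonzero determinant: after factoring $\theta_i^{\beta_0}\log\theta_i$ out of each row, it is an ordinary Vandermonde determinant in the $\log\theta_i$. It then takes clustered parameters $\beta_j=\beta_0+\varepsilon u_j$. A Taylor expansion and multilinearity show that the determinant of the differences $F(\beta_j)-F(\beta_1)$ equals $\varepsilon^{s(s-1)/2}$ times a nonzero constant, plus higher-order terms. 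Hence it is nonzero for small $\varepsilon$.

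\textbf{Your route.} You keep the row $A_s\equiv1$, which produces the square generalized Vandermonde matrix $\bigl(e^{\beta_j y_i}\bigr)$. You then use the Chebyshev-system property of exponential sums, proved by Rolle's theorem.

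\textbf{Comparison.} Your route is shorter and avoids the asymptotic bookkeeping with $\varepsilon$. It also proves more: every choice of $s$ distinct $\beta_j\in I$ works, not just suitably clustered ones near some $\beta_0$. The paper's argument gives only existence, which is all Lemma~\ref{lem:common-open} needs. On the other hand, it uses only a nonvanishing Wronskian-type determinant at one point. So it would apply to any smooth curve with a nondegenerate osculating frame, without relying on the exponential structure.

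\textbf{Two small remarks.}
\begin{enumerate}
  \item State the zero bound for a nonzero coefficient vector, and induct on the number of nonzero coefficients. Multiply by $e^{-z\beta}$ for an exponent $z$ whose coefficient is nonzero, then differentiate. This settles your worry about $c_0=0$ versus $c_0\ne0$. It also makes a separate argument that the exponentials are linearly independent unnecessary.
  \item Your real-analytic aside is not really an independent route. The claim that the determinant is not identically zero is exactly what the Rolle argument establishes.
\end{enumerate}
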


\begin{proof}
Write $m=s-1$.
We regard vectors in $\mathbb R^m$ as column vectors. The map from $\mathbf b$ to
$
  \bigl(A_1(\mathbf b),\ldots,A_m(\mathbf b)\bigr)^{\mathsf T}
$
is affine and one-to-one. Consider the column vector
\[
  F(\beta)=\bigl(\theta_1^\beta,\ldots,\theta_m^\beta\bigr)^{\mathsf T}.
\]
Fix $\beta_0\in I$, and write $x_i=\log\theta_i$ for $1\le i\le m$.
For $1\le r\le m$, the $r$-th derivative of $F$ at $\beta_0$ is
\[
  F^{(r)}(\beta_0)
  =\bigl(\theta_1^{\beta_0}x_1^r,\ldots,
         \theta_m^{\beta_0}x_m^r\bigr)^{\mathsf T}.
\]
Consequently, factoring $\theta_i^{\beta_0}x_i$ from row $i$ gives
\begin{align}
 \det\bigl(F'(\beta_0),\ldots,F^{(m)}(\beta_0)\bigr)
 &=\prod_{i=1}^m\theta_i^{\beta_0}x_i \cdot 
   \det(x_i^{r-1})_{1\le i,r\le m}\notag\\
 &=\prod_{i=1}^m\theta_i^{\beta_0}\log\theta_i
   \prod_{1\le i<j\le m}(\log\theta_j-\log\theta_i).       \label{eq:derivative-vandermonde}
\end{align}
Since $0<\theta_1<\cdots<\theta_m<1$, \eqref{eq:derivative-vandermonde} is nonzero.

Choose distinct real numbers $u_1,\ldots,u_s$. For $2\le j\le s$, Taylor's formula at
$\beta_0$ gives
\begin{equation}
 F(\beta_0+\varepsilon u_j)-F(\beta_0+\varepsilon u_1)
 =\sum_{r=1}^m\frac{\varepsilon^r(u_j^r-u_1^r)}{r!}F^{(r)}(\beta_0)
  +O(\varepsilon^{m+1}).                              \label{eq:F-difference-expansion}
\end{equation}
Let $C$ be the $m\times m$ matrix whose entry in row $r$ and column $j-1$ is
$(u_j^r-u_1^r)/r!$. Subtracting the first column from every other column in the usual
Vandermonde determinant for $u_1,\ldots,u_s$, and then expanding along its first row, gives
\begin{equation}\label{eq:parameter-vandermonde}
 \det C
 =\frac{1}{\prod_{r=1}^m r!}
   \prod_{1\le i<j\le s}(u_j-u_i)\ne0.
\end{equation}
By the multilinearity of the determinant, \eqref{eq:F-difference-expansion} therefore gives
\begin{align}
 &\det\bigl(
 F(\beta_0+\varepsilon u_2)-F(\beta_0+\varepsilon u_1),\ldots,
 F(\beta_0+\varepsilon u_s)-F(\beta_0+\varepsilon u_1)
 \bigr)\notag\\
 &\qquad
   =\varepsilon^{m(m+1)/2}
    \det\bigl(F'(\beta_0),\ldots,F^{(m)}(\beta_0)\bigr)\det C
    +o\bigl(\varepsilon^{m(m+1)/2}\bigr).                 \label{eq:difference-determinant}
\end{align}
The coefficient of the leading term is nonzero by
\eqref{eq:derivative-vandermonde} and \eqref{eq:parameter-vandermonde}. Thus the determinant
in \eqref{eq:difference-determinant} is nonzero for every sufficiently small positive
$\varepsilon$. We may also choose $\varepsilon$ so small that
$\beta_j=\beta_0+\varepsilon u_j$ belongs to $I$ for every $1\le j\le s$. Since this
determinant is nonzero, the vectors $F(\beta_j)-F(\beta_1)$, $2\le j\le s$, are linearly
independent. Hence the points $F(\beta_1),\ldots,F(\beta_s)$ are affinely independent.

Finally,
\eqref{eq:power-cumulative} says that the affine one-to-one map at the beginning of the proof
sends $\mathbf a(\beta_j)$ to $F(\beta_j)$. Hence
$\mathbf a(\beta_1),\ldots,\mathbf a(\beta_s)$ are affinely independent.
\end{proof}

\begin{lemma}\label{lem:common-open}
Suppose $c(\beta)=0$ for every $\beta$ in a nonempty open interval $I$, and let
$\eta=\eta_{\mathbf a}$. Then there is a
relatively open subset $U$ of the positive weight simplex such that $\eta$ is the unique
$\mathbf b$-weighted equilibrium state for every $\mathbf b\in U$.
\end{lemma}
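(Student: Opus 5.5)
Since $c(\beta)=0$ on $I$, for every $\beta\in I$ the measure $\eta$ attains the supremum defining $P^{\mathbf a(\beta)}(\sigma)$. By the uniqueness in Proposition~\ref{prop:known-results}\textup{(ii)}, $\eta=\eta_{\mathbf a(\beta)}$ for all $\beta\in I$. The plan is to pass from this one-parameter curve of weights to an open set of weights. The tool is convexity of the pressure as a function of the weight vector, together with the affine independence supplied by Lemma~\ref{lem:affine-span}.

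\textbf{Step 1: convexity.} Consider the function $\mathbf b\mapsto P^{\mathbf b}(\sigma)=\sup_{\mu}\sum_i b_ih(\tau_i\mu)$ on the closed simplex of probability weights, extended to nonnegative weights as a supremum of linear functions. It is a supremum of affine functions of $\mathbf b$, so it is convex. For every $\mathbf b$ we also have $P^{\mathbf b}(\sigma)\ge \ell(\mathbf b):=\sum_i b_ih(\tau_i\eta)$, with equality on $\{\mathbf a(\beta):\beta\in I\}$.

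\textbf{Step 2: affine independence.} Apply Lemma~\ref{lem:affine-span} to choose distinct $\beta_1,\dots,\beta_s\in I$ with $\mathbf a(\beta_1),\dots,\mathbf a(\beta_s)$ affinely independent. Let $\Sigma$ be their convex hull. Because the simplex of weights is $(s-1)$-dimensional, $\Sigma$ is a full-dimensional simplex in it. Take $U$ to be the relative interior of $\Sigma$; it is nonempty and relatively open. Every $\mathbf a(\beta_j)$ is positive, so $U$ lies inside the positive simplex.

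\textbf{Step 3: equality on $\Sigma$.} Let $\mathbf b=\sum_j t_j\mathbf a(\beta_j)$ with $t_j\ge0$ and $\sum_j t_j=1$. Convexity gives
\[
P^{\mathbf b}(\sigma)\le\sum_j t_jP^{\mathbf a(\beta_j)}(\sigma)=\sum_jt_j\ell(\mathbf a(\beta_j))=\ell(\mathbf b)\le P^{\mathbf b}(\sigma).
\]
Hence $P^{\mathbf b}(\sigma)=\sum_ib_ih(\tau_i\eta)$, so $\eta$ attains the supremum and is a $\mathbf b$-weighted equilibrium state. Uniqueness from Proposition~\ref{prop:known-results}\textup{(ii)}, applied to the positive vector $\mathbf b$, shows that $\eta$ is the unique one.

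\textbf{Expected obstacle.} The main issue is ensuring $U$ is genuinely open in the simplex, and this is exactly the affine independence provided by Lemma~\ref{lem:affine-span}. The remaining steps are routine. One caveat is that $\eta$ is an invariant measure on $X$, so it is admissible in every supremum; this is what makes the lower bound $\ell(\mathbf b)\le P^{\mathbf b}(\sigma)$ valid.
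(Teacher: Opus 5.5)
Your proposal is correct and is essentially the paper's argument. You choose affinely independent $\mathbf a(\beta_1),\ldots,\mathbf a(\beta_s)$ via Lemma~\ref{lem:affine-span}, note that $\eta$ attains the supremum for every convex combination $\mathbf b$ because the objective $\sum_i b_ih(\tau_i\mu)$ is linear in $\mathbf b$ (you phrase this as convexity of $\mathbf b\mapsto P^{\mathbf b}(\sigma)$, the paper writes out the inequality directly), take $U$ to be the relative interior of the convex hull, and conclude with the uniqueness in Proposition~\ref{prop:known-results}\textup{(ii)}.
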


\begin{proof}
Choose $\beta_1,\ldots,\beta_s$ as in Lemma~\ref{lem:affine-span}. The equality
$c(\beta_j)=0$ means that $\eta$ attains the supremum defining
$P^{\mathbf a(\beta_j)}(\sigma)$. If
\[
  \mathbf b=\sum_{j=1}^s t_j\mathbf a(\beta_j),
  \qquad t_j>0,\qquad \sum_jt_j=1,
\]
then for every $\mu\in\cM_\sigma(X)$,
\begin{align*}
  \sum_i b_i h(\tau_i\mu)
  &=\sum_jt_j\sum_i a_i(\beta_j)h(\tau_i\mu)\\
  &\le\sum_jt_j\sum_i a_i(\beta_j)h(\tau_i\eta)
   =\sum_i b_i h(\tau_i\eta).
\end{align*}
Thus $\eta$ is an equilibrium state for every weight vector in the relative interior
of the convex hull of these vectors.
Uniqueness follows from Proposition~\ref{prop:known-results}.
\end{proof}

\subsection{Recursive-function estimate}

Combining the common equilibrium state given by Lemma~\ref{lem:common-open} with the
Gibbs estimates of D.-J. Feng \cite{Feng2011}, we will obtain a recursive-function estimate.

For a positive probability weight vector $\mathbf b=(b_1,\ldots,b_s)$, set
\begin{equation}\label{eq:ratio-coordinates}
  \alpha_1(\mathbf b)=0,
  \qquad
  \alpha_i(\mathbf b)=\frac{A_{i-1}(\mathbf b)}{A_i(\mathbf b)}
  \quad 2\le i\le s.
\end{equation}
When $\mathbf b$ is fixed, we write simply $\alpha_i$ for
$\alpha_i(\mathbf b)$.
For the vector $\mathbf a$ in
\eqref{eq:theta-a}, this is exactly
\[
  \alpha_i(\mathbf a)=\frac{\theta_{i-1}}{\theta_i}
  =\frac{\log n_i}{\log n_{i-1}}.
\]
The map $\mathbf b\mapsto(\alpha_2,\ldots,\alpha_s)$ is a diffeomorphism from the positive
weight simplex onto $(0,1)^{s-1}$. Let $V$ be the image of the set $U$ from
Lemma~\ref{lem:common-open} under this map. The open set
$V$ contains an open box
\begin{equation}\label{eq:ratio-box}
  B=I_2\times\cdots\times I_s,
  \qquad
  \overline B\subset V\subset(0,1)^{s-1}.
\end{equation}
For $\boldsymbol\alpha\in B$, let $\mathbf b(\boldsymbol\alpha)\in U$ denote the unique
weight vector given by the inverse diffeomorphism.

We now define recursive functions associated with the successive factor maps. Their
definition agrees with the functions used in D.-J. Feng's weighted equilibrium construction, so
the Gibbs estimates in \cite{Feng2011} can be applied. Fix $\mathbf b$, set
$\phi_{\mathbf b}^{(1)}\equiv1$ on $\cL(X_1)$, and define the
remaining functions recursively by

\begin{equation}\label{eq:recursive-phi}
\begin{aligned}
  \phi_{\mathbf b}^{(2)}(J)
  &=\#\{I\in\cL_{|J|}(X_1):\pi_2(I)=J\},
  &&J\in\cL(X_2),\\
  \phi_{\mathbf b}^{(i)}(J)
  &=\sum_{\substack{I\in\cL_{|J|}(X_{i-1})\\ \pi_i(I)=J}}
   \phi_{\mathbf b}^{(i-1)}(I)^{\alpha_{i-1}(\mathbf b)},
  &&J\in\cL(X_i),\quad 3\le i\le s.
\end{aligned}
\end{equation}
For $N\ge1$, write
\begin{equation}\label{eq:Zr}
  Z_{\mathbf b}(N)=
  \sum_{J\in\cL_N(X_s)}
  \phi_{\mathbf b}^{(s)}(J)^{\alpha_s(\mathbf b)}.
\end{equation}
The function $\phi_{\mathbf b}^{(i)}$ depends only on
$\alpha_2(\mathbf b),\ldots,\alpha_{i-1}(\mathbf b)$.

The following proposition is based on \cite{Feng2011}.

\begin{proposition}\label{prop:feng-input}
Let $\eta_{\mathbf b}$ be the $\mathbf b$-weighted equilibrium state. Then:
\begin{enumerate}[label=\textup{(\roman*)}]
  \item for every $N\ge1$ and $J\in\cL_N(X_s)$,
  \begin{equation}\label{eq:coarse-gibbs}
    (\tau_s\eta_{\mathbf b})[J]\asymp_{\mathbf b}
    \frac{\phi_{\mathbf b}^{(s)}(J)^{\alpha_s(\mathbf b)}}
         {Z_{\mathbf b}(N)};
  \end{equation}
  \item if $1\le j<s$, $N\ge1$, and $I_j\in\cL_N(X_j)$, let
  $I_{k}=\pi_k\circ\cdots\circ\pi_{j+1}(I_j)$ for $j<k\le s$, and write
  \begin{equation}\label{eq:raw-weight}
    \widetilde g_{j,\mathbf b,N}(I_j)
    =
    \left(
    \prod_{k=j}^{s-1}
    \frac{\phi_{\mathbf b}^{(k)}(I_k)^{\alpha_k(\mathbf b)}}
         {\phi_{\mathbf b}^{(k+1)}(I_{k+1})}
    \right)
    \frac{\phi_{\mathbf b}^{(s)}(I_s)^{\alpha_s(\mathbf b)}}
         {Z_{\mathbf b}(N)}.
  \end{equation}
  Then
  \begin{equation}\label{eq:one-sided-gibbs}
    \sum_{I_j\in\cL_N(X_j)}\widetilde g_{j,\mathbf b,N}(I_j)=1,
    \qquad
    \widetilde g_{j,\mathbf b,N}(I_j)
    \lesssim_{\mathbf b}(\tau_j\eta_{\mathbf b})[I_j]. 
  \end{equation}
\end{enumerate}
All implicit constants in this proposition are independent of $N$ and of the words involved.
\end{proposition}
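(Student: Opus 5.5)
The plan is to read Proposition~\ref{prop:feng-input} off D.-J. Feng's construction of weighted equilibrium states in \cite{Feng2011}. The first step is to check that, for $\mathbf b$ fixed, the functions $\phi_{\mathbf b}^{(i)}$ in \eqref{eq:recursive-phi} coincide with Feng's recursively induced word functions for the zero potential along the factor chain \eqref{eq:factor-chain}. The initial function is $\phi^{(1)}\equiv1$, which is $\exp(0)$. At each step Feng takes a sum over the fibre of $\pi_i$ of the previous function raised to the power $A_{i-1}(\mathbf b)/A_i(\mathbf b)$, and this exponent is exactly $\alpha_i$ in \eqref{eq:ratio-coordinates}. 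One has to be careful with the indexing: in \eqref{eq:recursive-phi} the exponent attached to $\phi^{(i-1)}$ is $\alpha_{i-1}$, not $\alpha_i$. Before building on Feng's results, I would verify that this indexing matches Feng's normalisation, possibly after a relabelling. I would also invoke \cite[Lemma~5.7]{Feng2011}, which guarantees that these functions lie in the class where weak specification gives quasi-multiplicativity. By that lemma, $\phi^{(i)}(JJ')\asymp\phi^{(i)}(J)\phi^{(i)}(J')$ up to bounded gluing words.

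Part (i) then follows from the Gibbs property in \cite[Theorem~5.5]{Feng2011}, applied on the last factor $X_s$ to the potential $\phi_{\mathbf b}^{(s)}$ with exponent $\alpha_s$. Applying \cite[Lemma~7.2(ii)]{Feng2011} successively along $\pi_2,\ldots,\pi_s$ identifies $\tau_s\eta_{\mathbf b}$ with the equilibrium state of the induced potential. For that equilibrium state Feng shows $\mu[J]\asymp\Psi(J)/Z(N)$, where $\Psi=(\phi^{(s)})^{\alpha_s}$ and $Z(N)=\sum_J\Psi(J)$. The comparison with $Z_{\mathbf b}(N)$ rather than $e^{NP}$ is harmless, because quasi-multiplicativity gives $Z(N)\asymp e^{NP}$.

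For part (ii), the normalisation identity is a telescoping computation. First sum $\widetilde g_{j,\mathbf b,N}(I_j)$ over the fibre of $I_j$ over a fixed $I_{j+1}$. The only factor depending on $I_j$ is $\phi^{(j)}(I_j)^{\alpha_j}$, and by \eqref{eq:recursive-phi} its fibre sum is $\phi^{(j+1)}(I_{j+1})$, which cancels the denominator. Iterating this up the chain reduces the total to $\sum_{I_s}\phi^{(s)}(I_s)^{\alpha_s}/Z_{\mathbf b}(N)=1$. The case $j=1$ needs a separate check, since $\alpha_1=0$ and the first fibre sum is a counting sum, consistent with the definition of $\phi^{(2)}$.

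For the one-sided bound, write $\widetilde g_{j,\mathbf b,N}(I_j)$ as $\widetilde g_{s}(I_s)$ times the product of the conditional weights $\phi^{(k)}(I_k)^{\alpha_k}/\phi^{(k+1)}(I_{k+1})$. I would then use Feng's conditional Gibbs-type inequality from \cite[Theorem~7.3]{Feng2011}, whose proof compares $\tau_k\eta_{\mathbf b}[I_k]$ with $\tau_{k+1}\eta_{\mathbf b}[I_{k+1}]\cdot\phi^{(k)}(I_k)^{\alpha_k}/\phi^{(k+1)}(I_{k+1})$. This is the main obstacle. Feng establishes only a lower bound of this kind under weak specification, and only for suitably extended words. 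The gluing words of length at most $p$ shift indices, so each such comparison is a bounded-factor statement that holds only after summing over the at most $(\#\cS)^{p}$ possible extensions. Composing the comparisons for $k=s-1,\ldots,j$ then gives $\widetilde g_{j,\mathbf b,N}\lesssim_{\mathbf b}\tau_j\eta_{\mathbf b}[I_j]$. The constants depend on $\mathbf b$ through the finitely many quasi-multiplicativity constants and the exponents, but not on $N$ or on the words.
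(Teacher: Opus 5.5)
Your part~(i) and the normalisation identity in~(ii) are fine and agree with the paper. Also, \eqref{eq:recursive-phi} needs no relabelling: the exponent applied to $\phi^{(i-1)}_{\mathbf b}$ before summing over $\pi_i$-fibres is $\alpha_{i-1}=A_{i-2}/A_{i-1}$, so it is your heuristic description of Feng's step that is off by one.

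The gap is the one-sided bound in~\eqref{eq:one-sided-gibbs}. You propose to compose, for $k=s-1,\ldots,j$, conditional lower bounds
\[
  (\tau_k\eta_{\mathbf b})[I_k]\gtrsim_{\mathbf b}
  (\tau_{k+1}\eta_{\mathbf b})[I_{k+1}]\,
  \frac{\phi^{(k)}_{\mathbf b}(I_k)^{\alpha_k(\mathbf b)}}
       {\phi^{(k+1)}_{\mathbf b}(I_{k+1})}.
\]
You do not point to a statement that supplies these bounds. Your suggested repair, summing over gluing extensions, is not carried out, and it is unclear how it would give a bound for the unextended words uniformly in $N$.

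More importantly, this conditional bound is stronger than what is needed. The quantity $\widetilde g_{k,\mathbf b,N}(I_k)$ equals the displayed ratio times $\widetilde g_{k+1,\mathbf b,N}(I_{k+1})$, where we set $\widetilde g_{s,\mathbf b,N}(I_s)=\phi^{(s)}_{\mathbf b}(I_s)^{\alpha_s(\mathbf b)}/Z_{\mathbf b}(N)$. So deriving the conditional bound from an unconditional one, $(\tau_k\eta_{\mathbf b})[I_k]\gtrsim\widetilde g_{k,\mathbf b,N}(I_k)$, would also require the upper bound $(\tau_{k+1}\eta_{\mathbf b})[I_{k+1}]\lesssim\widetilde g_{k+1,\mathbf b,N}(I_{k+1})$. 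That is a two-sided Gibbs property at level $k+1$. Part~(i) gives it only when $k+1=s$. At intermediate levels it is exactly what weak specification does not provide. The paper obtains such two-sided estimates only later, in Lemma~\ref{lem:gibbs-upgrade}, and only under the comparability hypothesis~\eqref{eq:equal-length-uniform}.

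The missing observation is that no conditional estimate is needed. Apply your telescoping computation to $\widetilde g_{1,\mathbf b,N}$ and stop at level $j$ instead of carrying it to level $s$. This gives the pointwise identity
\[
  \widetilde g_{j,\mathbf b,N}(I_j)
  =\sum_{\substack{I_1\in\cL_N(X_1)\\ (\pi_j\circ\cdots\circ\pi_2)(I_1)=I_j}}
   \widetilde g_{1,\mathbf b,N}(I_1).
\]
The single top-level inequality $\widetilde g_{1,\mathbf b,N}(I_1)\lesssim_{\mathbf b}\eta_{\mathbf b}[I_1]$, valid for all $I_1\in\cL_N(X_1)$, is exactly \cite[Theorem~7.3(i)]{Feng2011}. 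Summing it over the fibre gives $\widetilde g_{j,\mathbf b,N}(I_j)\lesssim_{\mathbf b}(\tau_j\eta_{\mathbf b})[I_j]$ with the same constant, uniformly in $N$ and $I_j$. This is the paper's argument, and it uses no gluing words and no estimates at intermediate levels.
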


\begin{proof}
For a subshift $Y$ satisfying weak specification with bound $p$, recall from
\cite[Section~5]{Feng2011} that $\mathcal D_w(Y,p)$ is the collection of positive word
functions $\psi:\cL(Y)\to(0,\infty)$ satisfying the following two properties: for some
$C\ge1$,
\[
  \psi(IJ)\le C\psi(I)\psi(J)
  \quad\text{whenever }IJ\in\cL(Y),
\]
and, for every $I,J\in\cL(Y)$, there is a word $W\in\cL(Y)$ with $|W|\leq p$ such that
\[
  IWJ\in\cL(Y),
  \qquad
  \psi(IWJ)\ge C^{-1}\psi(I)\psi(J).
\]
The function $\phi_{\mathbf b}^{(1)}\equiv1$ on $\cL(X_1)$ belongs to
$\mathcal D_w(X_1,p)$. By \cite[Lemma~5.7]{Feng2011}, every $\phi_{\mathbf b}^{(i)}$,
and every positive power of it, belongs to $\mathcal D_w(X_i,p)$, since positive powers and
fibre sums under a factor map preserve the required two inequalities.

For part \textup{(i)}, \cite[Lemma~7.2]{Feng2011} identifies $\tau_s\eta_{\mathbf b}$
as the equilibrium state on $X_s$ generated by
$\bigl(\phi_{\mathbf b}^{(s)}\bigr)^{\alpha_s(\mathbf b)}$. Thus
\cite[Theorem~5.5]{Feng2011}, applied to this word function, gives
\eqref{eq:coarse-gibbs}.

For part \textup{(ii)}, \cite[Theorem~7.3(i)]{Feng2011} gives the required inequality
when $j=1$. Now fix $2\le j<s$. The recursion \eqref{eq:recursive-phi} says that, for every $1\le k<s$ and
$I_{k+1}\in\cL_N(X_{k+1})$,
\[
  \sum_{\pi_{k+1}(I_k)=I_{k+1}}
  \frac{\phi_{\mathbf b}^{(k)}(I_k)^{\alpha_k(\mathbf b)}}
       {\phi_{\mathbf b}^{(k+1)}(I_{k+1})}=1.
\]
Consequently, successively summing the preceding expression for
$\widetilde g_{1,\mathbf b,N}$ over the fibres of
$\pi_2,\ldots,\pi_j$ gives
\begin{equation}\label{eq:raw-weight-pushforward}
  \widetilde g_{j,\mathbf b,N}(I_j)
  =\sum_{\substack{I_1\in\cL_N(X_1)\\ (\pi_j\circ\cdots\circ\pi_2)(I_1)=I_j}}
    \widetilde g_{1,\mathbf b,N}(I_1).
\end{equation}
Using the definition \eqref{eq:Zr}, the same fibre-sum calculation, carried all
the way to level $s$, gives
\[
  \sum_{I_1\in\cL_N(X_1)}\widetilde g_{1,\mathbf b,N}(I_1)
  =\frac{1}{Z_{\mathbf b}(N)}
    \sum_{I_s\in\cL_N(X_s)}
    \phi_{\mathbf b}^{(s)}(I_s)^{\alpha_s(\mathbf b)}=1.
\]
Summing \eqref{eq:raw-weight-pushforward} over $I_j\in\cL_N(X_j)$ gives
\[
  \sum_{I_j\in\cL_N(X_j)}\widetilde g_{j,\mathbf b,N}(I_j)
  =\sum_{I_1\in\cL_N(X_1)}\widetilde g_{1,\mathbf b,N}(I_1)=1,
\]
which proves the first assertion in \eqref{eq:one-sided-gibbs}. Finally, using the $j=1$
inequality and \eqref{eq:raw-weight-pushforward} gives
\[
  \widetilde g_{j,\mathbf b,N}(I_j)
  \lesssim_{\mathbf b}
  \sum_{(\pi_j\circ\cdots\circ\pi_2)(I_1)=I_j}\eta_{\mathbf b}[I_1]
  = (\tau_j\eta_{\mathbf b})[I_j],
\]
which proves the second assertion in \eqref{eq:one-sided-gibbs}.
\end{proof}

For a subshift $Y$, we call a positive function $\phi$ on $\cL(Y)$ \emph{comparable} if
there is a constant $C\ge1$ such that
\begin{equation}\label{eq:equal-length-uniform}
  C^{-1}\le\frac{\phi(I)}{\phi(J)}\le C
  \quad\text{whenever }I,J\in\cL_N(Y),\ N\ge1,
\end{equation}
where the constant $C$ is independent of $N,I,J$.

\begin{lemma}\label{lem:gibbs-upgrade}
Fix $1\le j<s$ and $\mathbf b$. Suppose
$\phi_{\mathbf b}^{(j+1)},\ldots,\phi_{\mathbf b}^{(s)}$ satisfy
\eqref{eq:equal-length-uniform}. Then, for every $N\ge1$ and
$I\in\cL_N(X_j)$,
\begin{equation}\label{eq:upgraded-gibbs}
  (\tau_j\eta_{\mathbf b})[I]\asymp_{\mathbf b}
  \frac{\phi_{\mathbf b}^{(j)}(I)^{\alpha_j(\mathbf b)}}
       {\displaystyle\sum_{L\in\cL_N(X_j)}
         \phi_{\mathbf b}^{(j)}(L)^{\alpha_j(\mathbf b)}}.
\end{equation}
\end{lemma}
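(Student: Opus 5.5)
The plan is to start from the one-sided estimate in Proposition~\ref{prop:feng-input}\textup{(ii)}, namely $\widetilde g_{j,\mathbf b,N}(I)\lesssim_{\mathbf b}(\tau_j\eta_{\mathbf b})[I]$ together with $\sum_I\widetilde g_{j,\mathbf b,N}(I)=1$, and to use the comparability hypothesis to simplify the raw weight. This gives a lower bound of the desired form. The matching upper bound will then come from the Gibbs property at level $s$ combined with the comparability.

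\emph{Simplifying the raw weight.} For $I=I_j\in\cL_N(X_j)$ with images $I_{k}$, each factor $\phi^{(k)}_{\mathbf b}(I_k)^{\alpha_k}/\phi^{(k+1)}_{\mathbf b}(I_{k+1})$ with $k\ge j+1$ involves only functions assumed comparable. Hence it is, up to constants, a quantity $c_k(N)$ depending only on $N$; the same holds for $\phi^{(s)}(I_s)^{\alpha_s}/Z_{\mathbf b}(N)$. Thus
\[
  \widetilde g_{j,\mathbf b,N}(I)\asymp_{\mathbf b}\kappa(N)\,\phi^{(j)}_{\mathbf b}(I)^{\alpha_j},
\]
where $\kappa(N)=\phi^{(j+1)}(\cdot)^{-1}\prod_{k>j}c_k(N)\cdots$ is evaluated at any fixed words of length $N$. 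Summing over $I$ and using $\sum\widetilde g=1$ gives $\kappa(N)\asymp\bigl(\sum_L\phi^{(j)}(L)^{\alpha_j}\bigr)^{-1}$. Therefore $\widetilde g_{j,\mathbf b,N}(I)$ is comparable to the right-hand side of \eqref{eq:upgraded-gibbs}, and the one-sided estimate in \eqref{eq:one-sided-gibbs} yields the lower bound $\gtrsim$.

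\emph{Upper bound.} I would first try to avoid a separate Gibbs upper bound by a mass argument. Both $\widetilde g_{j,\mathbf b,N}$ and $(\tau_j\eta_{\mathbf b})[\cdot]$ are probability vectors on $\cL_N(X_j)$, and $\widetilde g\le C(\tau_j\eta)$ pointwise. Pointwise domination with equal total mass does not by itself give a reverse inequality, so more input is needed. The plan is to use Feng's two-sided Gibbs property for $\eta_{\mathbf b}$ at level $1$ (\cite[Theorem~7.3]{Feng2011}, of which Proposition~\ref{prop:feng-input}\textup{(ii)} is the one-sided extract). That result gives $\eta_{\mathbf b}[I_1]\asymp\widetilde g_{1,\mathbf b,N}(I_1)\cdot\psi$, where the correction factor $\psi$ is built from the higher-level $\phi^{(k)}$ along the fibres. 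Under comparability of $\phi^{(j+1)},\dots,\phi^{(s)}$, this correction is uniformly bounded when one pushes forward to level $j$. Summing over the fibre as in \eqref{eq:raw-weight-pushforward} then gives $(\tau_j\eta)[I]\lesssim\widetilde g_{j,\mathbf b,N}(I)$.

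If that route is unavailable, the alternative is to use quasi-Bernoulli/submultiplicativity. The function $\phi^{(j)}\in\mathcal D_w(X_j,p)$, and $\tau_j\eta_{\mathbf b}$ is the equilibrium state of the induced potential on the chain $X_j\to\cdots\to X_s$. Comparability makes the higher-level factors in that induced potential asymptotically constant per length. The induced weighted equilibrium state is then the ordinary equilibrium state for $(\phi^{(j)})^{\alpha_j}$, for which \cite[Theorem~5.5]{Feng2011} gives the two-sided Gibbs bound \eqref{eq:upgraded-gibbs} directly.

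\emph{Main obstacle.} The main difficulty is this upper bound, that is, justifying that the higher-level corrections collapse to functions of $N$ alone. Concretely, one must check that the constants in \eqref{eq:equal-length-uniform} propagate through the product in \eqref{eq:raw-weight} and through Feng's identification of $\tau_j\eta_{\mathbf b}$ without $N$-dependent losses.
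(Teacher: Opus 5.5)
\textbf{There is a genuine gap: the upper bound is flagged but never proved.} Your first step is correct and matches the paper's. Comparability of $\phi_{\mathbf b}^{(j+1)},\ldots,\phi_{\mathbf b}^{(s)}$ turns \eqref{eq:raw-weight} into $T_N\,\phi_{\mathbf b}^{(j)}(I)^{\alpha_j(\mathbf b)}$ with $T_N$ independent of $I$. The normalisation in \eqref{eq:one-sided-gibbs} then fixes $T_N$ up to constants, and the one-sided inequality gives the lower bound.

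Neither of your two routes for the upper bound works as stated.
\begin{itemize}
\item Route (a) invokes a two-sided level-$1$ Gibbs estimate for $\eta_{\mathbf b}$ ``with a correction factor $\psi$'', but you do not point to any result that provides it. The level-$1$ input \cite[Theorem~7.3(i)]{Feng2011} is one-sided, which is why Proposition~\ref{prop:feng-input}\textup{(ii)} is stated as an inequality. A two-sided estimate is available only at level $s$, namely \eqref{eq:coarse-gibbs}.
\item Route (b) names the right target, the ordinary equilibrium state of $(\phi_{\mathbf b}^{(j)})^{\alpha_j(\mathbf b)}$. However, identifying $\tau_j\eta_{\mathbf b}$ with it is the entire content of the lemma. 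Saying that the higher-level factors are ``constant per length'' is a statement about word functions. It does not remove the entropy terms $h(\tau_k\mu)$, $k>j$, of competing measures from the weighted variational problem on the chain $X_j\to\cdots\to X_s$ that characterises $\tau_j\eta_{\mathbf b}$.
\end{itemize}

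\textbf{The missing idea is a soft ergodic-theory step, using only what you already have.}
\begin{enumerate}
\item Let $\nu$ be the equilibrium state on $X_j$ of $(\phi_{\mathbf b}^{(j)})^{\alpha_j(\mathbf b)}\in\mathcal D_w(X_j,p)$ given by \cite[Theorem~5.5]{Feng2011}. It is invariant and satisfies the two-sided Gibbs bound: $\nu[I]$ is comparable to the right-hand side of \eqref{eq:upgraded-gibbs}.
\item Your lower bound then says $\nu[I]\lesssim(\tau_j\eta_{\mathbf b})[I]$ for cylinders of every length, with a single constant.
\item Since every open subset of $X_j$ is a countable disjoint union of cylinders, regularity gives $\nu\le C\,\tau_j\eta_{\mathbf b}$ on all Borel sets. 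In particular $\nu\ll\tau_j\eta_{\mathbf b}$.
\item The measure $\eta_{\mathbf b}$ is ergodic by \cite[Theorem~7.3]{Feng2011}, so its factor $\tau_j\eta_{\mathbf b}$ is ergodic too.
\item An invariant probability measure absolutely continuous with respect to an ergodic one coincides with it. To see this, integrate Birkhoff averages against $\nu$; equivalently, use that distinct ergodic measures are mutually singular. Hence $\nu=\tau_j\eta_{\mathbf b}$.
\item The two-sided Gibbs bound for $\nu$ is therefore exactly \eqref{eq:upgraded-gibbs}.
\end{enumerate}

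This also answers your own remark that pointwise domination with equal total mass gives no reverse inequality. At a single length $N$ it indeed does not. A domination that holds uniformly over all lengths, however, is a domination of invariant measures, and ergodicity upgrades that to equality.
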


\begin{proof}
Fix $N\ge1$. For each $j+1\le k\le s$, choose a word
$J_{k,N}\in\cL_N(X_k)$. Condition \eqref{eq:equal-length-uniform} gives
\[
  \phi_{\mathbf b}^{(k)}(I_k)
  \asymp_{\mathbf b}
  \phi_{\mathbf b}^{(k)}(J_{k,N})
  \qquad I_k\in\cL_N(X_k).
\]
Substitution in \eqref{eq:raw-weight} gives
\[
  \widetilde g_{j,\mathbf b,N}(I)
  \asymp_{\mathbf b}
  T_N\phi_{\mathbf b}^{(j)}(I)^{\alpha_j(\mathbf b)},
  \qquad
  T_N=\frac{1}{Z_{\mathbf b}(N)}
  \prod_{k=j+1}^s
  \phi_{\mathbf b}^{(k)}(J_{k,N})^{\alpha_k(\mathbf b)-1}.
\]
The quantity $T_N$ may depend on $N$, but it does not depend on $I$. The normalisation in
\eqref{eq:one-sided-gibbs} now gives
\[
  1=\sum_{I\in\cL_N(X_j)}\widetilde g_{j,\mathbf b,N}(I)
  \asymp_{\mathbf b}
  T_N\sum_{I\in\cL_N(X_j)}
  \phi_{\mathbf b}^{(j)}(I)^{\alpha_j(\mathbf b)}.
\]
Solving this comparison for $T_N$ and substituting it into the preceding estimate gives
\begin{equation}\label{eq:raw-normalised}
  \widetilde g_{j,\mathbf b,N}(I)
  \asymp_{\mathbf b}
  \frac{\phi_{\mathbf b}^{(j)}(I)^{\alpha_j(\mathbf b)}}
       {\displaystyle\sum_{L\in\cL_N(X_j)}
        \phi_{\mathbf b}^{(j)}(L)^{\alpha_j(\mathbf b)}}.
\end{equation}
If $j=1$, then $\bigl(\phi_{\mathbf b}^{(1)}\bigr)^{\alpha_1(\mathbf b)}=1$, which belongs
to $\mathcal D_w(X_1,p)$, as noted in the proof of Proposition~\ref{prop:feng-input}. If $2\le j<s$, the verification in the proof of
Proposition~\ref{prop:feng-input} shows that
$\bigl(\phi_{\mathbf b}^{(j)}\bigr)^{\alpha_j(\mathbf b)}$ belongs to
$\mathcal D_w(X_j,p)$. Thus \cite[Theorem~5.5]{Feng2011} gives a unique equilibrium state
$\nu$ on $X_j$. Moreover, $\nu$ is ergodic and satisfies
\begin{equation}\label{eq:nu-gibbs}
  \nu[I]\asymp_{\mathbf b}
  \frac{\phi_{\mathbf b}^{(j)}(I)^{\alpha_j(\mathbf b)}}
       {\displaystyle\sum_{L\in\cL_N(X_j)}
        \phi_{\mathbf b}^{(j)}(L)^{\alpha_j(\mathbf b)}}.
\end{equation}
Equations \eqref{eq:one-sided-gibbs}, \eqref{eq:raw-normalised}, and
\eqref{eq:nu-gibbs} imply that
\[
  \nu[I]\lesssim_{\mathbf b}(\tau_j\eta_{\mathbf b})[I]
\]
for every cylinder $[I]\subset X_j$. Every open subset of $X_j$ can be written as a
countable disjoint union of cylinders, so the same inequality holds for open sets. By
regularity of Borel probability measures, it follows that
$\nu\ll\tau_j\eta_{\mathbf b}$.

By \cite[Theorem~7.3]{Feng2011}, the weighted equilibrium state $\eta_{\mathbf b}$ is
ergodic, and hence its factor $\tau_j\eta_{\mathbf b}$ is ergodic. Since $\nu$ is also
ergodic, distinct ergodic invariant probability measures are
mutually singular, whereas $\nu\ll\tau_j\eta_{\mathbf b}$. Therefore
$\nu=\tau_j\eta_{\mathbf b}$, and \eqref{eq:nu-gibbs} proves
\eqref{eq:upgraded-gibbs}.
\end{proof}

We now combine the preceding estimates with the fact that the equilibrium state is the same
throughout the box $B$. By changing one coordinate $\alpha_i$ while keeping the others
fixed, two Gibbs estimates for the same measure can be compared.

\begin{lemma}\label{lem:downward}
Suppose that, for every $\boldsymbol\alpha=(\alpha_2,\ldots,\alpha_s)$ in the box
$B=I_2\times\cdots\times I_s$ defined in \eqref{eq:ratio-box}, the same measure $\eta$ is
the $\mathbf b(\boldsymbol\alpha)$-weighted equilibrium state. Then,
for every fixed $\boldsymbol\alpha\in B$ and $1\le i\le s$, the function
$\phi_{\mathbf b(\boldsymbol\alpha)}^{(i)}$ satisfies
\eqref{eq:equal-length-uniform}.
\end{lemma}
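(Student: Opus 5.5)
The plan is a downward induction on $i$, from $i=s$ to $i=2$. The inductive statement is: for every $\boldsymbol\alpha\in B$, the function $\phi_{\mathbf b(\boldsymbol\alpha)}^{(i)}$ satisfies \eqref{eq:equal-length-uniform}, with a constant allowed to depend on $\boldsymbol\alpha$. The case $i=1$ is trivial because $\phi^{(1)}\equiv1$. The mechanism is the one already indicated: $\phi^{(i)}_{\mathbf b}$ depends only on $\alpha_2,\ldots,\alpha_{i-1}$, while the exponent $\alpha_i$ in the Gibbs formula can be varied inside $I_i$ without changing the measure $\eta$. Two Gibbs estimates for the same measure, with different exponents on the same function, then force that function to be essentially constant on words of a given length.

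\emph{Base case $i=s$.} Fix $\boldsymbol\alpha\in B$, and choose $\alpha_s'\in I_s$ with $\alpha_s'\ne\alpha_s$. Let $\boldsymbol\alpha'$ agree with $\boldsymbol\alpha$ except in its last coordinate, which is $\alpha_s'$. Write $\mathbf b=\mathbf b(\boldsymbol\alpha)$ and $\mathbf b'=\mathbf b(\boldsymbol\alpha')$.
\begin{itemize}
\item Since $\phi^{(s)}$ involves only $\alpha_2,\ldots,\alpha_{s-1}$, we have $\phi^{(s)}_{\mathbf b}=\phi^{(s)}_{\mathbf b'}=:\phi$.
\item By hypothesis, $\eta_{\mathbf b}=\eta_{\mathbf b'}=\eta$.
\item Proposition~\ref{prop:feng-input}(i), applied at both weights, gives for all $J\in\cL_N(X_s)$
\[
\frac{\phi(J)^{\alpha_s}}{Z_{\mathbf b}(N)}\asymp(\tau_s\eta)[J]\asymp\frac{\phi(J)^{\alpha_s'}}{Z_{\mathbf b'}(N)}.
\]
\end{itemize}
Hence $\phi(J)^{\alpha_s-\alpha_s'}\asymp Z_{\mathbf b}(N)/Z_{\mathbf b'}(N)$, and the right-hand side does not depend on $J$. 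Taking two words $J,J'$ of the same length and dividing gives $(\phi(J)/\phi(J'))^{\alpha_s-\alpha_s'}\asymp1$. Since $\alpha_s\ne\alpha_s'$, this yields \eqref{eq:equal-length-uniform}. The constant depends only on the two fixed weights $\mathbf b,\mathbf b'$, which is allowed.

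\emph{Inductive step.} Let $2\le i<s$, and assume that for every $\boldsymbol\alpha\in B$ the functions $\phi^{(i+1)}_{\mathbf b(\boldsymbol\alpha)},\ldots,\phi^{(s)}_{\mathbf b(\boldsymbol\alpha)}$ are comparable. Fix $\boldsymbol\alpha\in B$ and $\alpha_i'\in I_i\setminus\{\alpha_i\}$, and let $\boldsymbol\alpha'$ be $\boldsymbol\alpha$ with its $i$-th coordinate replaced by $\alpha_i'$; since $B$ is a box, $\boldsymbol\alpha'\in B$.
\begin{itemize}
\item The induction hypothesis holds at both $\boldsymbol\alpha$ and $\boldsymbol\alpha'$. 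This matters because the higher functions $\phi^{(k)}$, $k>i$, do depend on $\alpha_i$.
\item Lemma~\ref{lem:gibbs-upgrade} with $j=i$ therefore applies at both weights.
\item Because $\phi^{(i)}$ does not involve $\alpha_i$, both applications concern the same function $\phi=\phi^{(i)}$, and they give
\[
\frac{\phi(I)^{\alpha_i}}{\sum_{L\in\cL_N(X_i)}\phi(L)^{\alpha_i}}\asymp(\tau_i\eta)[I]\asymp\frac{\phi(I)^{\alpha_i'}}{\sum_{L\in\cL_N(X_i)}\phi(L)^{\alpha_i'}}
\]
for all $I\in\cL_N(X_i)$.
\end{itemize}
The same division argument as in the base case shows that $\phi$ is comparable at $\boldsymbol\alpha$. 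This completes the induction, and the case $i=1$ is immediate.

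The main point requiring care is the order of quantifiers in the inductive hypothesis. It must be stated for all $\boldsymbol\alpha\in B$ at once, not for one fixed $\boldsymbol\alpha$, because Lemma~\ref{lem:gibbs-upgrade} is applied at the perturbed point $\boldsymbol\alpha'$. This is exactly why the box structure \eqref{eq:ratio-box} is needed: it keeps each single-coordinate perturbation inside $B$. A second check is that $\phi^{(i)}$ truly does not depend on $\alpha_i$. This follows from \eqref{eq:recursive-phi}, where $\phi^{(i)}$ uses only the exponents $\alpha_2,\ldots,\alpha_{i-1}$. All implicit constants depend only on the finitely many weights used at each step, so no uniformity over $B$ is required.
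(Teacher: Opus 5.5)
Your proposal is correct and follows the paper's proof essentially step for step. Both run a downward induction from $i=s$, using Proposition~\ref{prop:feng-input}(i) at the top level and Lemma~\ref{lem:gibbs-upgrade} below it; one ratio coordinate is varied inside its interval while $\phi^{(i)}$ and $\tau_i\eta$ stay fixed, and the inductive hypothesis is stated at every point of $B$, exactly as in the paper. The only cosmetic difference is that you perturb the given $\alpha_i$ to some $\alpha_i'$, whereas the paper takes two arbitrary distinct values $u,v\in I_i$ and notes that the resulting comparability of $\phi^{(i)}$ does not depend on the $i$-th coordinate.
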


\begin{proof}
The case $i=1$ is immediate since $\phi_{\mathbf b(\boldsymbol\alpha)}^{(1)}\equiv1$.
We argue from $i=s$ down to $i=2$. First fix
$\alpha_2,\ldots,\alpha_{s-1}$ and choose distinct $u,v\in I_s$. For $t\in\{u,v\}$,
let $\mathbf b_t$ be the weight vector corresponding to
$(\alpha_2,\ldots,\alpha_{s-1},t)$. The function
$\phi_{\mathbf b_t}^{(s)}$ does not depend on $t$; denote this common function by $\Phi_s$.
The measure $\tau_s\eta$ is also unchanged. Write
\[
  S_{s,t}(N)=\sum_{L\in\cL_N(X_s)}\Phi_s(L)^t,
  \qquad t\in\{u,v\}
\]
and apply \eqref{eq:coarse-gibbs} to $\mathbf b_u$ and $\mathbf b_v$. This gives
\[
  \frac{\Phi_s(J)^u}{S_{s,u}(N)}
  \asymp_{\mathbf b_u}(\tau_s\eta)[J]
  \asymp_{\mathbf b_v}
  \frac{\Phi_s(J)^v}{S_{s,v}(N)}.
\]
Therefore
\[
  \Phi_s(J)^{u-v}
  \asymp_{\mathbf b_u,\mathbf b_v}
  \frac{S_{s,u}(N)}{S_{s,v}(N)}.
\]
The comparison constants are independent of $N$ and $J$. Hence, for
$J,L\in\cL_N(X_s)$,
\[
  \left(\frac{\Phi_s(J)}{\Phi_s(L)}\right)^{u-v}
  \asymp_{\mathbf b_u,\mathbf b_v}1,
\]
with an implicit constant independent of $N,J,L$. Since $u\ne v$, this proves
\eqref{eq:equal-length-uniform} for
$\phi_{\mathbf b(\boldsymbol\alpha)}^{(s)}$ at every $\boldsymbol\alpha\in B$.

Now suppose the assertion has been proved for
$\phi_{\mathbf b}^{(j+1)},\ldots,\phi_{\mathbf b}^{(s)}$ at every point of $B$.
Fix all coordinates of $\boldsymbol\alpha$ except $\alpha_j$, and choose distinct
$u,v\in I_j$. For $t\in\{u,v\}$, let
$\mathbf b_t$ be the weight vector obtained by setting $\alpha_j=t$. The function
$\phi_{\mathbf b_t}^{(j)}$ depends only on $\alpha_2,\ldots,\alpha_{j-1}$ and is therefore
independent of $t$; denote it by $\Phi_j$. Lemma~\ref{lem:gibbs-upgrade}, applied to
$\mathbf b_u$ and $\mathbf b_v$, gives
\[
  (\tau_j\eta)[I]
  \asymp_{\mathbf b_u}
  \frac{\Phi_j(I)^u}{S_{j,u}(N)},
  \qquad
  (\tau_j\eta)[I]
  \asymp_{\mathbf b_v}
  \frac{\Phi_j(I)^v}{S_{j,v}(N)},
\]
where $S_{j,t}(N)=\sum_{L\in\cL_N(X_j)}\Phi_j(L)^t$. Since
$\tau_j\eta$ is unchanged,
\[
  \Phi_j(I)^{u-v}
  \asymp_{\mathbf b_u,\mathbf b_v}
  \frac{S_{j,u}(N)}{S_{j,v}(N)}.
\]
Again the comparison constants are independent of $N$ and $I$. Thus, for
$I,L\in\cL_N(X_j)$,
\[
  \left(\frac{\Phi_j(I)}{\Phi_j(L)}\right)^{u-v}
  \asymp_{\mathbf b_u,\mathbf b_v}1,
\]
with an implicit constant independent of $N,I,L$. Since $u\ne v$, this proves
\eqref{eq:equal-length-uniform} for
$\phi_{\mathbf b(\boldsymbol\alpha)}^{(j)}$ at every $\boldsymbol\alpha\in B$.
\end{proof}

\subsection{Fibre estimates}

We now pass from the recursive functions to the fibres of the factor maps. We first treat
each adjacent map $\pi_i$ and then each composite map $\tau_i$. The resulting estimates
imply dimension equality and complete the contradiction argument.

For $N\ge1$, $1\le i\le s$, and $J\in\cL_N(X_i)$, write
\[
  f_1(J)=1,
  \qquad
  f_i(J)=\#\{I\in\cL_N(X_{i-1}):\pi_i(I)=J\}\quad(2\le i\le s).
\]

\begin{lemma}\label{lem:adjacent-fibres}
For $2\le i\le s$, if $\phi_{\mathbf b}^{(i-1)}$ and $\phi_{\mathbf b}^{(i)}$ satisfy
\eqref{eq:equal-length-uniform}, then so does $f_i$. The function $f_1$ also satisfies
\eqref{eq:equal-length-uniform}.
\end{lemma}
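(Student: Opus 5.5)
The plan is to compare $f_i$ directly with $\phi_{\mathbf b}^{(i)}$ through the recursion \eqref{eq:recursive-phi}. The comparability of $\phi_{\mathbf b}^{(i-1)}$ makes every term in the fibre sum essentially the same size. The fibre sum is then, up to constants, the fibre cardinality times a quantity depending only on $N$. The assertion for $f_1$ is immediate: $f_1\equiv1$, so \eqref{eq:equal-length-uniform} holds with $C=1$.

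First I note that $f_i$ is a positive function on $\cL(X_i)$. If $J\in\cL_N(X_i)$, then $J=\pi_i(I)$ for some $I\in\cL_N(X_{i-1})$, since $X_i=\pi_i(X_{i-1})$. Hence $f_i(J)\ge1$. For $i=2$ there is nothing more to do, because \eqref{eq:recursive-phi} gives $\phi_{\mathbf b}^{(2)}=f_2$ identically. The hypothesis on $\phi_{\mathbf b}^{(2)}$ is then exactly the conclusion.

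Now let $3\le i\le s$ and write $\alpha=\alpha_{i-1}(\mathbf b)\in(0,1)$. Let $C\ge1$ be a common constant for which both $\phi_{\mathbf b}^{(i-1)}$ and $\phi_{\mathbf b}^{(i)}$ satisfy \eqref{eq:equal-length-uniform}. Fix $N\ge1$ and a reference word $K_N\in\cL_N(X_{i-1})$, and set $c_N=\phi_{\mathbf b}^{(i-1)}(K_N)>0$. Every $I\in\cL_N(X_{i-1})$ then satisfies
\[
  C^{-\alpha}c_N^{\alpha}\le\phi_{\mathbf b}^{(i-1)}(I)^{\alpha}\le C^{\alpha}c_N^{\alpha}.
\]
Sum this over the fibre $\{I\in\cL_N(X_{i-1}):\pi_i(I)=J\}$, which has exactly $f_i(J)$ elements. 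By \eqref{eq:recursive-phi}, for every $J\in\cL_N(X_i)$,
\[
  C^{-\alpha}c_N^{\alpha}f_i(J)\le\phi_{\mathbf b}^{(i)}(J)\le C^{\alpha}c_N^{\alpha}f_i(J).
\]

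Finally, take $J,J'\in\cL_N(X_i)$. In the ratio $f_i(J)/f_i(J')$ the factor $c_N^{\alpha}$ cancels, which gives
\[
  C^{-2\alpha}\frac{\phi_{\mathbf b}^{(i)}(J)}{\phi_{\mathbf b}^{(i)}(J')}
  \le\frac{f_i(J)}{f_i(J')}
  \le C^{2\alpha}\frac{\phi_{\mathbf b}^{(i)}(J)}{\phi_{\mathbf b}^{(i)}(J')}.
\]
The hypothesis on $\phi_{\mathbf b}^{(i)}$ bounds the ratio on the right between $C^{-1}$ and $C$. Hence $f_i$ satisfies \eqref{eq:equal-length-uniform} with constant $C^{1+2\alpha}$, which is independent of $N$, $J$ and $J'$.

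I do not expect a real obstacle. The only point to get right is that the unknown normalisation $c_N$, which may grow with $N$, cancels in the ratio. This cancellation is why \eqref{eq:equal-length-uniform} compares only words of the same length.
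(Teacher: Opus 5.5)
Your proof is correct and follows essentially the same route as the paper's. The paper also sandwiches $\phi_{\mathbf b}^{(i)}(J)$ between $f_i(J)\,m_N^{\alpha_{i-1}}$ and $f_i(J)\,M_N^{\alpha_{i-1}}$, where $m_N,M_N$ are the minimum and maximum of $\phi_{\mathbf b}^{(i-1)}$ on $\cL_N(X_{i-1})$ instead of your reference value $c_N$, and then compares two words of the same length; your separate treatment of $i=2$ via $\phi_{\mathbf b}^{(2)}=f_2$ is just the trivial case of that sandwich.
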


\begin{proof}
The constant function $f_1\equiv1$ on $\cL(X_1)$ trivially satisfies
\eqref{eq:equal-length-uniform}. Let $2\le i\le s$, and write
\[
  m_N=\min_{I\in\cL_N(X_{i-1})}\phi_{\mathbf b}^{(i-1)}(I),
  \qquad
  M_N=\max_{I\in\cL_N(X_{i-1})}\phi_{\mathbf b}^{(i-1)}(I).
\]
Condition \eqref{eq:equal-length-uniform} gives
$M_N/m_N\le C_{i-1}$, where $C_{i-1}\ge1$ is independent of $N$. By \eqref{eq:recursive-phi},
\[
  f_i(J)m_N^{\alpha_{i-1}}
  \le\phi_{\mathbf b}^{(i)}(J)
  \le f_i(J)M_N^{\alpha_{i-1}}.
\]
Comparing this estimate for two words $J,J'\in\cL_N(X_i)$ proves that
$f_i(J)/f_i(J')$ is bounded above and below independently of $N,J,J'$.
\end{proof}

For $N\ge1$, $1\le i\le s$, and $J\in\cL_N(X_i)$, set
\[
  F_i(J)=\#\{I\in\cL_N(X):\tau_i(I)=J\}.
\]

\begin{lemma}\label{lem:composite-fibres}
If every $f_i$, $1\le i\le s$, satisfies \eqref{eq:equal-length-uniform}, then every
$F_i$ satisfies the same condition.
\end{lemma}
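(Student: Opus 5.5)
The argument is an induction on $i$, with the fibre counts of $\tau_i$ built up as iterated fibre sums of the adjacent maps. For $i=1$ the map $\tau_1$ is the identity, so $F_1\equiv1$ and the condition holds trivially. For $2\le i\le s$ and $J\in\cL_N(X_i)$, the fibre $\tau_i^{-1}(J)\cap\cL_N(X)$ decomposes according to the intermediate image $\tau_{i-1}(I)\in\pi_i^{-1}(J)\cap\cL_N(X_{i-1})$. This gives the recursion
\[
  F_i(J)=\sum_{\substack{K\in\cL_N(X_{i-1})\\ \pi_i(K)=J}}F_{i-1}(K).
\]
The identity holds because every $K$ in the fibre of $\pi_i$ is itself the image $\tau_{i-1}(I)$ of some admissible $I$, since $X_{i-1}=\tau_{i-1}(X)$. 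It follows that $F_i(J)\ge f_i(J)\ge1$.

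Now assume that $F_{i-1}$ satisfies \eqref{eq:equal-length-uniform} with constant $C'_{i-1}$. Set
\[
  m'_N=\min_{K\in\cL_N(X_{i-1})}F_{i-1}(K),\qquad M'_N=\max_{K\in\cL_N(X_{i-1})}F_{i-1}(K),
\]
so that $M'_N\le C'_{i-1}m'_N$. The recursion then gives
\[
  f_i(J)\,m'_N\le F_i(J)\le f_i(J)\,M'_N .
\]
For two words $J,J'\in\cL_N(X_i)$ this yields
\[
  \frac{F_i(J)}{F_i(J')}\le\frac{f_i(J)M'_N}{f_i(J')m'_N}\le C_iC'_{i-1},
\]
where $C_i$ is the constant for $f_i$. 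The symmetric lower bound follows in the same way. The resulting constant $C'_i=C_iC'_{i-1}$ is independent of $N$, $J$, and $J'$, which completes the induction. This is the same min/max sandwich used in Lemma~\ref{lem:adjacent-fibres}.

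The only point needing real care is the recursion itself: the fibre count must be taken over all admissible words of length $N$ in $X_{i-1}$, not merely over those that extend in some restricted way. This is exactly why $f_i$ was defined through $\cL_N(X_{i-1})$ and why $\tau_i=\pi_i\circ\tau_{i-1}$. Once this is checked, the remaining steps are routine.
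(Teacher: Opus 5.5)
Your proof is correct and follows the paper's argument essentially verbatim: you start from $F_1\equiv1$, use the fibre-sum recursion $F_i(J)=\sum_{\pi_i(K)=J}F_{i-1}(K)$, and apply the sandwich $f_i(J)\min F_{i-1}\le F_i(J)\le f_i(J)\max F_{i-1}$ to get a constant independent of $N$. Your additional check that $\tau_{i-1}(\cL_N(X))=\cL_N(X_{i-1})$, which makes the recursion exact and gives $F_{i-1}\ge1$, is a detail the paper leaves implicit.
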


\begin{proof}
The assertion is clear for $F_1\equiv1$. If it holds for $F_i$, then
\[
  F_{i+1}(J)
  =\sum_{\substack{L\in\cL_N(X_i)\\\pi_{i+1}(L)=J}}F_i(L).
\]
This lies between
$f_{i+1}(J)\min_LF_i(L)$ and $f_{i+1}(J)\max_LF_i(L)$. Since both $F_i$ and
$f_{i+1}$ have uniformly comparable values on words of the same length, the same is true of
$F_{i+1}$.
\end{proof}

\begin{lemma}\label{lem:dimension-equality}
If all the functions $F_i$ satisfy \eqref{eq:equal-length-uniform}, then
$\dimH K=\dimB K$.
\end{lemma}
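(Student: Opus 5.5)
Assume every $F_i$ satisfies \eqref{eq:equal-length-uniform} with a common constant $C$. The plan is to compare the two formulas in \eqref{eq:dimension-formulas} by exhibiting a single invariant measure $\mu$ with $h(\tau_i\mu)=h_{\mathrm{top}}(X_i)$ for every $i$. Then $P^{\mathbf a}(\sigma)\ge\sum_i a_i h_{\mathrm{top}}(X_i)$, so $\dimH K\ge\dimB K$. Combined with the general inequality $\dimH K\le\dimB K$, this gives equality.

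\textbf{Step 1 (uniform push-forwards).} Fix $N$ and let $\PP_N$ be the uniform distribution on $\cL_N(X)$. Since $F_i(J)=\#\tau_i^{-1}(J)\cap\cL_N(X)$ is comparable to its average $\#\cL_N(X)/\#\cL_N(X_i)$, we get
\[
  (\tau_i)_*\PP_N(\{J\})=\frac{F_i(J)}{\#\cL_N(X)}\ge \frac{C^{-1}}{\#\cL_N(X_i)}
  \qquad J\in\cL_N(X_i).
\]
Hence $(\tau_i)_*\PP_N(\{J\})\le C/\#\cL_N(X_i)$, and so
\[
  H\bigl((\tau_i)_*\PP_N\bigr)\ge\log\#\cL_N(X_i)-\log C .
\]

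\textbf{Step 2 (invariant limit).} Let $\nu_N$ be a Borel probability measure on $X$ whose $N$-block marginal is $\PP_N$; for instance, put mass $1/\#\cL_N(X)$ on one chosen point of each cylinder $[I]$. Form the Ces\`aro averages
\[
  \mu_N=\frac1N\sum_{j=0}^{N-1}\sigma^j_*\nu_N ,
\]
and take a weak$^*$ limit point $\mu\in\cM_\sigma(X)$. Apply the standard Misiurewicz argument from the proof of the variational principle to the factor partitions $\tau_i^{-1}\cL_q(X_i)$, using $\tau_i\circ\sigma=\sigma\circ\tau_i$. Block subadditivity turns the bound $\frac1N H((\tau_i)_*\PP_N)\ge\frac1N\log\#\cL_N(X_i)-o(1)$ into
\[
  \frac1q H\bigl(\tau_i\mu|\cL_q(X_i)\bigr)\ge h_{\mathrm{top}}(X_i)
  \qquad\text{for every } q .
\]
The same limit $\mu$ serves all $i$ simultaneously, since only finitely many partitions are involved. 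Letting $q\to\infty$ gives $h(\tau_i\mu)\ge h_{\mathrm{top}}(X_i)$, and the reverse inequality is automatic.

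\textbf{Step 3 (conclusion).} By \eqref{eq:weighted-pressure} and Step 2,
\[
  P^{\mathbf a}(\sigma)\ge\sum_i a_i h(\tau_i\mu)=\sum_i a_ih_{\mathrm{top}}(X_i).
\]
Proposition~\ref{prop:known-results}(i) then yields $\dimH K\ge\dimB K$, which with $\dimH K\le\dimB K$ proves the lemma.

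The main obstacle is Step 2. The Misiurewicz argument must be run carefully so that the lower bound survives the weak$^*$ limit, which needs cylinder boundaries of $\mu$-measure zero (automatic in the symbolic space) and a careful treatment of the error from splitting $[0,N)$ into $q$-blocks. If this becomes messy, a cleaner alternative is available. Use Lemma~\ref{lem:common-open}: $\eta$ is the unique $\mathbf b$-weighted equilibrium state for an open set of $\mathbf b$, and comparability of the $\phi^{(i)}$ from Lemma~\ref{lem:downward}, together with Lemma~\ref{lem:gibbs-upgrade}, gives Gibbs bounds $(\tau_i\eta)[J]\asymp 1/\#\cL_N(X_i)$ (because $\phi^{(i)}$ is nearly constant on $\cL_N(X_i)$). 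Proposition~\ref{prop:known-results}(iii) then gives $h(\tau_i\eta)=h_{\mathrm{top}}(X_i)$ directly. I would try the first route and fall back on the second.
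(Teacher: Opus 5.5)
Your argument is correct, but it takes a different route from the paper.

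\textbf{The paper's route.} The paper's proof is a short reduction to an external criterion. It uses the counting estimates $\#\cL_N(X)\asymp e^{Nh_{\mathrm{top}}(X)}$ and $\#\cL_N(X_i)\asymp e^{Nh_{\mathrm{top}}(X_i)}$ from \cite[Proposition~2.2]{Feng2026}, which need weak specification. With these, comparability of $F_i$ becomes $F_i(J)\asymp e^{N(h_{\mathrm{top}}(X)-h_{\mathrm{top}}(X_i))}$. It then invokes condition (c) of \cite[Theorem~3.1]{Feng2026}, a uniform-fibre characterization of $\dimH K=\dimB K$.

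\textbf{Your route.} You stay inside the variational formula of Proposition~\ref{prop:known-results}(i) and exhibit a single $\mu\in\cM_\sigma(X)$ with $h(\tau_i\mu)=h_{\mathrm{top}}(X_i)$ for every $i$. This is in fact equivalent to equality in the two formulas. Running Misiurewicz's argument on all factors with one weak$^*$ subsequence is legitimate for three reasons:
\begin{itemize}
\item $\tau_i$ is continuous and commutes with $\sigma$, so $\tau_i\mu_N$ is the Ces\`aro average of $\tau_i\nu_N$ and converges to $\tau_i\mu$.
\item Cylinders are clopen, so $H(\cdot\,|\cL_q(X_i))$ passes to the limit.
\item $\frac1N\log\#\cL_N(X_i)$ converges, so the lower bound holds along every subsequence.
\end{itemize}

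\textbf{What each buys.} The paper's proof is shorter but black-boxes a nontrivial characterization theorem. Yours is self-contained modulo the dimension formulas already quoted. It needs only subexponential comparability, $\log(\max F_i/\min F_i)=o(N)$, rather than a uniform constant. It also avoids the exponential counting estimates, using only Fekete's lemma.

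\textbf{Two small remarks.}
\begin{itemize}
\item In Step~1 the word ``Hence'' is a non sequitur. The bound the entropy estimate actually uses, $(\tau_i)_*\PP_N(\{J\})\le C/\#\cL_N(X_i)$, does not follow from the lower bound. It follows directly from comparability, because $F_i(J)\le C\min F_i$ and the minimum is at most the average $\#\cL_N(X)/\#\cL_N(X_i)$.
\item Your fallback route does not prove this lemma as stated. It uses the standing hypothesis of the contradiction argument, namely a common equilibrium state on an open set of weights, rather than comparability of the $F_i$. It would, however, give a valid shortcut proof of Proposition~\ref{thm:pressure-positive} that bypasses Lemmas~\ref{lem:adjacent-fibres}--\ref{lem:dimension-equality} entirely.
\end{itemize}
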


\begin{proof}
The subshifts $X$ and $X_i$ satisfy weak specification. Hence
\cite[Proposition~2.2]{Feng2026} gives
\[
  \#\cL_N(X)\asymp e^{Nh_{\mathrm{top}}(X)},
  \qquad
  \#\cL_N(X_i)\asymp e^{Nh_{\mathrm{top}}(X_i)}.
\]
For $J\in\cL_N(X_i)$, condition \eqref{eq:equal-length-uniform} and the identity
$\sum_{L\in\cL_N(X_i)}F_i(L)=\#\cL_N(X)$ give
\[
  F_i(J)
  \asymp\frac{\#\cL_N(X)}{\#\cL_N(X_i)}
  \asymp e^{N(h_{\mathrm{top}}(X)-h_{\mathrm{top}}(X_i))}.
\]
Thus condition \textup{(c)} of \cite[Theorem~3.1]{Feng2026} holds, and that theorem gives
$\dimH K=\dimB K$.
\end{proof}

\begin{proof}[Proof of Proposition~\ref{thm:pressure-positive}]
Recall from \eqref{eq:pressure-gap} that $c(\beta)\ge0$ for every $\beta\in(0,1)$.
Suppose, to the contrary, that there is no $\beta\in(1/2,1)$ for which $c(\beta)>0$.
Then
\begin{equation}\label{eq:pressure-gap-zero}
  c(\beta)=0
  \qquad\text{for every }\beta\in(1/2,1).
\end{equation}
Apply Lemma~\ref{lem:common-open} with $I=(1/2,1)$. There is a relatively open set $U$ of
positive weight vectors such that the fixed measure $\eta=\eta_{\mathbf a}$ is the unique
$\mathbf b$-weighted equilibrium state for every $\mathbf b\in U$.
Under the ratio coordinates in \eqref{eq:ratio-coordinates}, the image of $U$ contains
the box $B=I_2\times\cdots\times I_s$ from \eqref{eq:ratio-box}. Fix
$\boldsymbol\alpha\in B$ and set $\mathbf b=\mathbf b(\boldsymbol\alpha)$.
Lemma~\ref{lem:downward} shows that
$\phi_{\mathbf b}^{(i)}$ satisfies \eqref{eq:equal-length-uniform} for every
$1\le i\le s$. Lemmas~\ref{lem:adjacent-fibres} and~\ref{lem:composite-fibres} then show
that every $F_i$ satisfies the same condition. Lemma~\ref{lem:dimension-equality}
gives $\dimH K=\dimB K$, contrary to the hypothesis $\dimH K<\dimB K$. Hence
\eqref{eq:pressure-gap-zero} is impossible, and $c(\beta)>0$ for some
$\beta\in(1/2,1)$.
\end{proof}

\section{Proof of Theorem~\ref{thm:reducible}}\label{sec:reducible}

We prove Theorem~\ref{thm:reducible} in three steps. First, for a sofic subshift
$X$, we establish the existence and formula for the box dimension of its corresponding
self-affine set $K=R(X)$. We then treat separately the cases $\dimH K=\dimB K$ and
$\dimH K<\dimB K$. In the first case, the critical Hausdorff measure is positive and
$\sigma$-finite but need not be finite. In the second case, examples show that it may be
positive and finite or infinite and $\sigma$-finite.

Fix a finite vertex-labelled directed graph $G=(V,E,\ell)$ presenting $X$, where $V$ is the
vertex set, $E\subset V\times V$ is the edge set, and $\ell:V\to\cS\subset\cA$ is the label
map. Write
\[
  \Sigma_G=\{v=(v_k)_{k\ge1}\in V^{\NN}:(v_k,v_{k+1})\in E
       \text{ for every }k\ge1\}.
\]
Applying $\ell$ coordinatewise, the graph presentation gives
\[
  X=\ell(\Sigma_G)
   =\{(\ell(v_k))_{k\ge1}:v\in\Sigma_G\}.
\]
For a finite vertex path $v_1\cdots v_N$, we also write
$\ell(v_1\cdots v_N)=\ell(v_1)\cdots\ell(v_N)$. We use the following convention for
directed paths and strongly connected components; compare \cite{MauldinWilliams1988}.

A \textit{directed path} from $u$ to $v$ is a finite sequence of vertices
$u=v_0,v_1,\ldots,v_r=v$, where $r\ge1$, such that
$(v_{\ell-1},v_\ell)\in E$ for every $1\le\ell\le r$.
A \textit{directed cycle} is a directed path whose first and last vertices are the same. We
say that $v$ is \textit{reachable} from $u$ if there is a directed path from $u$ to $v$.

A nonempty set $C\subset V$ is \emph{strongly connected} if, for every $u,v\in C$, there is
a directed path from $u$ to $v$. A \emph{strongly connected component} is a maximal strongly
connected subset in $V$. A vertex belongs to one of these components exactly when it lies on a directed
cycle. Distinct components are disjoint.

An ordered tuple $(D_1,\ldots,D_r)$ of strongly connected components is called a
\emph{component chain} if, for every $1\le j<r$, either $D_j=D_{j+1}$ or a vertex of
$D_{j+1}$ is reachable from a vertex of $D_j$.

Every sequence in $X$ is the label sequence of at least one infinite path in $G$. The
vertices that occur infinitely often along such a path lie in one strongly connected
component, and the path eventually remains in that component. Let $C_1,\ldots,C_M$ be all
the strongly connected components of $G$. For a component $C$, define
\[
  \Sigma_C=\{v\in\Sigma_G:v_k\in C\text{ for every }k\ge1\},
  \qquad X_C=\ell(\Sigma_C),
  \qquad K_C=R(X_C).
\]
The path shift $\Sigma_C$ is irreducible, so its factor $X_C$ satisfies weak specification.

For a strongly connected component $C$ and $1\le i\le s$, set
\begin{equation}\label{eq:component-entropy}
  h_i(C)=h_{\mathrm{top}}\bigl(\tau_i(X_C)\bigr).
\end{equation}
For $1\le j\le M$, let
\[
  \mathcal U_j=\{u\in\cL(X):uy\in X\text{ for some }y\in X_{C_j}\}.
\]
This is a countable set and contains the empty word. If $u=d_1\cdots d_q\in\mathcal U_j$,
define
\[
  f_u(z)=\sum_{\ell=1}^{q}\Lambda^{-\ell}d_\ell+\Lambda^{-q}z
\]
for $z\in\RR^d$. For every $1\le j\le M$, we have $K_{C_j}\subset K$. Every $x\in X$ can be written as $x=uy$, where
$u\in\mathcal U_j$ and $y\in X_{C_j}$ for some $j$, and
\eqref{eq:geometric-coding-map} gives
$R(uy)=f_u(R(y))$. Thus
\begin{equation}\label{eq:scc-decomposition}
  \bigcup_{j=1}^M K_{C_j}
  \subseteq K
  \subseteq
  \bigcup_{j=1}^M\ \bigcup_{u\in\mathcal U_j}f_u(K_{C_j}).
\end{equation}
Every $f_u$ is a bi-Lipschitz affine embedding, so countable stability of Hausdorff dimension
gives
\begin{equation}\label{eq:reducible-Hdim}
  \dimH K=\max_j\dimH K_{C_j}.
\end{equation}

\subsection{Box dimension}

The finite graph presentation will be used to prove that the box dimension exists and to
derive its formula. This finiteness assumption is essential: for general subshifts, even the
box dimension may fail to exist \cite{Jurga2023}.

We now estimate the number of level-$k$ approximate cubes. By
\eqref{eq:number-approximate-cubes}, this number is $\#\calD_k(X)$.
Recall from \eqref{eq:word-vectors} that
\[
\begin{aligned}
  \calD_k(X)=\biggl\{(J_1,\ldots,J_s)\in
  \prod_{i=1}^s\cL_{\lfloor\theta_i k\rfloor-
  \lfloor\theta_{i-1}k\rfloor}(X_i):{}&
  \text{there is }I\in\cL_k(X)\text{ such that}\\
  &\tau_i(I|_{\lfloor\theta_{i-1}k\rfloor}^{\lfloor\theta_i k\rfloor})=J_i
  \text{ for every }i\biggr\}.
\end{aligned}
\]

For a component $C$, $1\le i\le s$, and $N\ge1$, a vertex path
$\widehat J=v_1\cdots v_N\in\cL_N(\Sigma_C)$ is called a \emph{lift} of
$J\in\cL_N(\tau_i(X_C))$ if $\tau_i(\ell(\widehat J))=J$.

\begin{lemma}\label{lem:fixed-endpoint-words}
Let $C$ be a strongly connected component of $G$ and let $1\le i\le s$. For every $\varepsilon>0$ and
all sufficiently large $N$, there are vertices $u_{i,N},v_{i,N}\in C$ and a set
$\mathcal V_{i,N}\subset\cL_N(\tau_i(X_C))$ such that
\[
  \#\mathcal V_{i,N}\ge
  \exp\bigl(N(h_i(C)-\varepsilon)\bigr),
\]
and every word in $\mathcal V_{i,N}$ has a lift that begins at $u_{i,N}$ and ends at $v_{i,N}$.
\end{lemma}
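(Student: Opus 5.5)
We prove the lemma by pigeonholing over endpoint pairs. Since $\tau_i(X_C)=\tau_i(\ell(\Sigma_C))$, every word $J\in\cL_N(\tau_i(X_C))$ is the $\tau_i\circ\ell$-image of some admissible vertex word of length $N$ in $\Sigma_C$. Every admissible vertex word of $\Sigma_C$ extends to an infinite path in $C$, so it is a genuine vertex path inside $C$. For each such $J$, fix one lift $\widehat J=v_1\cdots v_N$. Record the pair $(v_1,v_N)\in C\times C$; this pair is the starting and ending vertex of the chosen lift.

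This defines a map from $\cL_N(\tau_i(X_C))$ into $C\times C$. The target has at most $(\#V)^2$ elements, a bound independent of $N$. Hence some pair $(u_{i,N},v_{i,N})$ has preimage $\mathcal V_{i,N}$ with
\[
  \#\mathcal V_{i,N}\ge(\#V)^{-2}\,\#\cL_N(\tau_i(X_C)).
\]
By construction, every word of $\mathcal V_{i,N}$ has a lift beginning at $u_{i,N}$ and ending at $v_{i,N}$.

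It remains to bound $\#\cL_N(\tau_i(X_C))$ from below. By the definition of topological entropy as a limit (Fekete's lemma, recalled in Section~\ref{sec:preliminaries}), $\frac1N\log\#\cL_N(\tau_i(X_C))\to h_i(C)$. Therefore, for large $N$,
\[
  \#\cL_N(\tau_i(X_C))\ge\exp\bigl(N(h_i(C)-\varepsilon/2)\bigr).
\]
Subadditivity in fact gives $\#\cL_N\ge e^{Nh}$ for every $N$, so this step needs no largeness of $N$.

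Combining the two bounds, $\#\mathcal V_{i,N}\ge(\#V)^{-2}e^{N(h_i(C)-\varepsilon/2)}\ge e^{N(h_i(C)-\varepsilon)}$ as soon as $e^{N\varepsilon/2}\ge(\#V)^2$, which holds for all sufficiently large $N$.

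The only point needing care is the claim that an element of $\cL_N(\Sigma_C)$ really is a path within $C$. This is immediate from the definition of $\Sigma_C$, since $\Sigma_C$ consists only of paths whose vertices all lie in $C$. No further obstacle is expected.
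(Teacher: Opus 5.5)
Your proof is correct and follows the paper's argument essentially verbatim: you fix one lift per word, pigeonhole by the endpoint pair (you bound the number of classes by $(\#V)^2$, the paper by $(\#C)^2$), and use $\frac1N\log\#\cL_N(\tau_i(X_C))\to h_i(C)$. Your side remark is also correct: by subadditivity (Fekete), $\#\cL_N(\tau_i(X_C))\ge e^{Nh_i(C)}$ holds for every $N$, so largeness of $N$ is needed only to absorb the factor $(\#V)^{-2}$.
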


\begin{proof}

Choose one lift for each $J\in\cL_N(\tau_i(X_C))$ and partition $\cL_N(\tau_i(X_C))$ according to
the initial and terminal vertices of the chosen lifts. There are at most $(\#C)^2$ classes,
so one class $\mathcal V_{i,N}$, with common endpoint pair $(u_{i,N},v_{i,N})\in C\times C$, satisfies

\[
  \#\mathcal V_{i,N}
  \ge \frac{\#\cL_N(\tau_i(X_C))}{(\#C)^2}.
\]
By the definition of $h_i(C)$,
\[
  \lim_{N\to\infty}\frac1N\log\#\cL_N(\tau_i(X_C))=h_i(C).
\]
Therefore, for all sufficiently large $N$, we have
\[
\begin{aligned}
  \log\#\mathcal V_{i,N}
  &\ge \log\#\cL_N(\tau_i(X_C))-2\log\#C\\
  &\ge N(h_i(C)-\varepsilon/2)-N\varepsilon/2\\
  &=N(h_i(C)-\varepsilon).
\end{aligned}
\]
This completes the proof.
\end{proof}

\begin{proposition}\label{prop:reducible-box}

Let $X\subset\cA^{\NN}$ be a sofic subshift with the finite vertex-labelled graph
presentation fixed above, let $C_1,\ldots,C_M$ be all its strongly connected components, and
set $K=R(X)$. Then the box dimension of $K$ exists and
\begin{equation}\label{eq:reducible-box-formula}
  \dimB K
  =\frac1{\log n_s}
   \max_{(D_1,\ldots,D_s)}
   \sum_{i=1}^s a_i h_i(D_i),
\end{equation}
where the maximum is over component chains $(D_1,\ldots,D_s)$ with
$D_i\in\{C_1,\ldots,C_M\}$, and $h_i$ is defined in \eqref{eq:component-entropy}.
\end{proposition}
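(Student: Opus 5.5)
We count level-$k$ approximate cubes via \eqref{eq:number-approximate-cubes}, $\#\calQ_k(X)=\#\calD_k(X)$, and aim to prove
\[
  \log\#\calD_k(X)=k\max_{(D_1,\ldots,D_s)}\sum_{i=1}^s a_ih_i(D_i)+o(k),
\]
where the maximum is over component chains. By \eqref{eq:cube-size}, every level-$k$ approximate cube has diameter comparable to $n_s^{-k}$, so this gives both the lower and upper box dimension, and hence the existence of $\dimB K$ with the formula \eqref{eq:reducible-box-formula}. Write $k_i=\lfloor\theta_i k\rfloor$ and $D_i^{(k)}=k_i-k_{i-1}=a_ik+O(1)$.

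\emph{Upper bound.} Each $I\in\cL_k(X)$ is the label of a vertex path $v_1\cdots v_k$ in $G$. Transient vertices (those outside every component) occur at most $\#V$ times along any path. Hence the path splits into at most $M+\#V$ segments, each lying in one component, with the components visited in chain order. Record the following data: the breakpoints (at most $k^{O(1)}$ choices), the sequence of components visited, and the transient labels (at most $O(1)^{\#V}$ choices). Given these data, restrict to a window $(k_{i-1},k_i]$. There $\tau_i$ of the labels is a concatenation of $\tau_i$-images of words of $X_{C}$ for components $C$ in the chain, interleaved with boundedly many transient symbols. The number of such window words is therefore at most
\[
  k^{O(1)}\exp\Bigl(\sum_{C}\ell_C(h_i(C)+\varepsilon)+O_\varepsilon(1)\Bigr),
\]
where $\ell_C$ is the time spent in $C$ within the window. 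This uses the subadditivity bound $\#\cL_n(\tau_i X_C)\le e^{n(h_i(C)+\varepsilon)}C_\varepsilon$.

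The product over $i$ is then a sum of terms $\ell_{C,i}h_i(C)$, and we must bound it by $k\max_{\text{chains}}\sum_i a_ih_i(D_i)$. This is the main obstacle. It is a linear program: the time allocation of the visited components along $[0,k]$ is a monotone sequence, so the functional is linear in the allocation, and its maximum over allocations is attained at an extreme point. One would like that extreme point to assign each window a single component. However, a component can straddle window boundaries, and within window $i$ several components may share the time. Linearity in $\ell_{C,i}$ resolves this: within window $i$, replace every component by the one maximizing $h_i$ among those present in that window. The components used across windows remain chain-ordered, because the maximizers can be chosen monotonically along the chain order. This monotonic choice is the key point to verify, and I would argue it by choosing the maximizer in window $i$ to be the latest component in chain order among the ties, then checking that the resulting sequence is still a component chain. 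Summing the losses gives the upper bound with error $O(\varepsilon k+\log k)$.

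\emph{Lower bound.} Fix a maximizing chain $(D_1,\ldots,D_s)$. For each $i$, Lemma~\ref{lem:fixed-endpoint-words} applied with $N\approx D_i^{(k)}-O(1)$ gives a set $\mathcal V_i$ of $\tau_i(X_{D_i})$-words with common lift endpoints $u_i,v_i$. Connect $v_i$ to $u_{i+1}$ by a fixed path of bounded length: such a path exists because either $D_i=D_{i+1}$ or $D_{i+1}$ is reachable from $D_i$. Prepend a bounded path from some vertex to $u_1$, and extend the end to infinity inside $D_s$. Choosing one $\mathcal V_i$-word per window yields admissible words $I\in\cL_k(X)$. The $i$-th window of such an $I$ determines $J_i\in\mathcal V_i$, up to a bounded shift, which is absorbed by reserving $O(1)$ slack symbols in each window and adjusting $N$ accordingly. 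Consequently
\[
  \#\calD_k(X)\ge\prod_i\#\mathcal V_i\cdot O(1)^{-1}\ge\exp\Bigl(k\sum_ia_ih_i(D_i)-\varepsilon k-O(1)\Bigr).
\]
Combining this with the upper bound and letting $\varepsilon\to0$ shows that $\lim_k\frac1k\log\#\calD_k(X)$ exists and equals the stated maximum. This proves the proposition.
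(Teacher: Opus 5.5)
Your proposal is correct and follows the paper's proof essentially step for step. It uses the same count of $\calD_k(X)$, the same decomposition of a vertex path into chain-ordered component segments plus at most $\#V$ transient vertices (with polynomially many combinatorial types), the same replacement of the components meeting window $i$ by one maximizing $h_i$, and the same lower-bound concatenation via Lemma~\ref{lem:fixed-endpoint-words} with bounded connecting paths; where the paper fixes the first $d_i$ digits by pigeonholing, your bounded-to-one argument is equivalent. The monotonicity you flag as the key point needs no tie-breaking or linear-programming argument: the component segments occur in order along the path and the windows are ordered left to right, so every segment meeting window $i$ occurs no later than every segment meeting window $i+1$, and hence any choice of maximizers yields a component chain.
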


\begin{proof}
For $k\ge1$ and $0\le i\le s$, write
$k_i=\lfloor\theta_i k\rfloor$, and write $m_i=k_i-k_{i-1}$ for $1\le i\le s$. Then
$\sum_{i=1}^s m_i=k$ and $m_i/k\to a_i=\theta_i-\theta_{i-1}$.

Each level-$k$ approximate cube has diameter comparable to $n_s^{-k}$, while every set of
diameter at most $n_s^{-k}$ meets at most a bounded number of level-$k$ approximate cubes. Thus
\eqref{eq:cube-size},
\eqref{eq:encoding-interface}, and \eqref{eq:number-approximate-cubes} show that the
logarithms of the covering number of $K$ at scale $n_s^{-k}$ and of $\#\calD_k(X)$ differ
by $O(1)$. Consequently, the lower and upper box dimensions are obtained from the
corresponding lower and upper limits of $k^{-1}\log\#\calD_k(X)$ by division by
$\log n_s$. It is therefore enough to estimate $\log\#\calD_k(X)$.

\smallskip

Fix $\varepsilon>0$. We first prove the upper bound. Since there are finitely many pairs $(i,C)$, we have,
uniformly for all $N\ge1$, $i$, and $C$,
\begin{equation}\label{eq:component-language-upper}
  \#\cL_N(\tau_i(X_C))
  \lesssim_\varepsilon\exp\bigl(N(h_i(C)+\varepsilon)\bigr).
\end{equation}
For every $\mathbf J=(J_1,\ldots,J_s)\in\calD_k(X)$, choose a word
$I(\mathbf J)\in\cL_k(X)$ such that
\[
  \tau_i\bigl(I(\mathbf J)|_{k_{i-1}}^{k_i}\bigr)=J_i
  \qquad 1\le i\le s.
\]
Since $I(\mathbf J)$ determines $\mathbf J$, the words $I(\mathbf J)$ chosen for distinct
$\mathbf J$ are distinct. For each $I(\mathbf J)$, choose a
vertex path $\widehat I(\mathbf J)\in\cL_k(\Sigma_G)$ with
$\ell(\widehat I(\mathbf J))=I(\mathbf J)$. Decompose this path as
\[
  \widehat I(\mathbf J)=U_0P^{(1)}U_1\cdots U_{q-1}P^{(q)}U_q,
\]
where each $P^{(j)}$ is a maximal vertex subpath contained in a component
$C^{(j)}$, and every vertex in the subpath $U_j$ lies outside all components. A vertex outside the
components cannot occur twice in a directed path, since the segment between two occurrences
would be a directed cycle. A path also cannot leave and later return to the same component,
since the intervening vertices would then be strongly connected to that component. Hence
$C^{(1)},\ldots,C^{(q)}$ are distinct and form a component chain, $q\le M$, and
$
  \sum_{j=0}^q|U_j|\le\#V.
$
For $1\le i\le s$ and $1\le j\le q$, let $n_{i,j}$ be the number of positions in
$(k_{i-1},k_i]\cap\NN$ occupied by $P^{(j)}$. For each $\mathbf J$, form the tuple
\[
  T(\widehat I(\mathbf J))=
  \left(q,(C^{(j)})_{j=1}^q,(U_j)_{j=0}^q,
  (n_{i,j})_{\substack{1\le i\le s\\1\le j\le q}}\right).
\]
Let $\mathfrak T_k=\{T(\widehat I(\mathbf J)):\mathbf J\in\calD_k(X)\}$. The number of possibilities
for $q$, $(C^{(j)})_{j=1}^q$, and $(U_j)_{j=0}^q$ is independent of $k$. Since
$0\le n_{i,j}\le k$,
\begin{equation*}
  \#\mathfrak T_k
  \lesssim\sum_{q=0}^M(k+1)^{sq}
  =\frac{(k+1)^{s(M+1)}-1}{(k+1)^s-1}
  \lesssim(k+1)^{sM}.
\end{equation*}

Fix $T\in\mathfrak T_k$ and define
\[
  \mathcal J_i(T)=\{1\le j\le q:n_{i,j}>0\}.
\]
For all sufficiently large $k$, $m_i>\#V$ for every $i$, so $\mathcal J_i(T)$ is nonempty.
Choose $j_i\in\mathcal J_i(T)$ and set $D_i=C^{(j_i)}$ so that
\[
  h_i(D_i)=\max_{j\in\mathcal J_i(T)}h_i(C^{(j)}).
\]
The component subpaths $P^{(1)},\ldots,P^{(q)}$ occur in this order along $\widehat I(\mathbf J)$, and
the index intervals $(k_{i-1},k_i]\cap\NN$ are ordered from left to right. Hence $j_i\le j_{i+1}$ for $1\le i<s$, and therefore
$(D_1,\ldots,D_s)$ is a component chain. Moreover,
\begin{equation*}
  \sum_{j=1}^q n_{i,j}\le m_i,
  \qquad
  \sum_{j=1}^q n_{i,j}h_i(C^{(j)})
  \le h_i(D_i)\sum_{j=1}^q n_{i,j}
  \le m_i h_i(D_i).
\end{equation*}

For fixed $T$, the vertex words $U_j$ and hence their labels are fixed. The part of $J_i$
corresponding to $P^{(j)}$ is a word in
$\cL_{n_{i,j}}(\tau_i(X_{C^{(j)}}))$. Thus \eqref{eq:component-language-upper} gives
\begin{align*}
 \#\{\mathbf J\in\calD_k(X):T(\widehat I(\mathbf J))=T\}
 &\le
  \prod_{i=1}^s\prod_{\substack{1\le j\le q\\n_{i,j}>0}}
  \#\cL_{n_{i,j}}\bigl(\tau_i(X_{C^{(j)}})\bigr)\\
 &\lesssim_\varepsilon
  \exp\left\{
    \sum_{i=1}^s\sum_{j=1}^q
    n_{i,j}\bigl(h_i(C^{(j)})+\varepsilon\bigr)
  \right\}\\
 &\lesssim_\varepsilon
  \exp\left\{\sum_{i=1}^s m_i h_i(D_i)+\varepsilon k\right\}.
\end{align*}
Summing over $T\in\mathfrak T_k$ gives, for a constant $C_\varepsilon>0$ independent of $k$,
\begin{align*}
  \#\calD_k(X)
  \le C_\varepsilon (k+1)^{sM}
  \times
  \exp\left\{
    \max_{\substack{(D_1,\ldots,D_s)\text{ is}\\
                     \text{a component chain}}}
    \sum_{i=1}^s m_i h_i(D_i)+\varepsilon k
  \right\}.
\end{align*}
Taking logarithms gives
\begin{equation}\label{eq:reducible-box-upper-count}
  \log\#\calD_k(X)
  \le
  \max_{\substack{(D_1,\ldots,D_s)\text{ is}\\
                   \text{a component chain}}}
  \sum_{i=1}^s m_i h_i(D_i)+\varepsilon k+sM\log(k+1)+\log C_\varepsilon.
\end{equation}

For the lower bound, fix a component chain $(D_1,\ldots,D_s)$ and set $B=\#V$.
Since $(D_i,D_{i+1})$ is a component chain, any vertex in $D_i$ can be joined to
any vertex in $D_{i+1}$ by a positive path of length at most $B$.

For all sufficiently large $k$, Lemma~\ref{lem:fixed-endpoint-words}, applied for each
$i$ with $N=m_i-B$, gives vertices $u_i:=u_{i,m_i-B}$ and $v_i:=v_{i,m_i-B}$, and a set
$\mathcal V_i:=\mathcal V_{i,m_i-B}\subset\cL_{m_i-B}(\tau_i(X_{D_i}))$ such that
\begin{equation*}
  \log\#\mathcal V_i\ge(m_i-B)(h_i(D_i)-\varepsilon),
\end{equation*}
and, for every $J_i\in\mathcal V_i$, a chosen vertex-path lift
$\widehat J_i\in\cL_{m_i-B}(\Sigma_{D_i})$ begins at $u_i$ and ends at $v_i$.

For $1\le i<s$, choose a vertex path $v_i\widehat W_i u_{i+1}$ of length at most $B$, and write
$q_i=|\widehat W_i|\le B$. This is independent of $J_i,J_{i+1}$.
Hence every tuple $(J_1,\ldots,J_s)\in\prod_{i=1}^s\mathcal V_i$ gives a vertex path
\[
  \widehat J_1\widehat W_1\widehat J_2\widehat W_2\cdots
  \widehat W_{s-1}\widehat J_s\in\cL(\Sigma_G)
\]
of length
\[
  \sum_{i=1}^s(m_i-B)+\sum_{i=1}^{s-1}q_i
  =k-sB+\sum_{i=1}^{s-1}q_i<k.
\]
The terminal vertex of $\widehat J_s$ is fixed and lies in $D_s$. Starting from this
vertex, extend every such path by the same vertex path until the resulting path has length $k$, and let
$I(J_1,\ldots,J_s)\in\cL_k(X)$ be the word obtained by applying $\ell$ to the resulting path.

For $1\le i\le s$, let
$
  d_i=(i-1)B-\sum_{r=1}^{i-1}q_r.
$
The number of labels preceding $\ell(\widehat J_i)$ is
\[
  \sum_{r=1}^{i-1}(m_r-B)+\sum_{r=1}^{i-1}q_r
  =k_{i-1}-d_i.
\]
Therefore $\ell(\widehat J_i)$ occupies the positions
\[
  (k_{i-1}-d_i,k_i-B-d_i]\cap\NN,
  \qquad 0\le d_i\le(i-1)B.
\]
For sufficiently large $k$, we have $d_i<m_i-B$. The digits of $X_{D_i}$ belong to
$\ell(D_i)$, and the map $\tau_i$ cannot increase their cardinality. Hence
$\tau_i(X_{D_i})$ has at most $\#D_i\le B$ digits, so there are at most $B^{d_i}$ possible
length-$d_i$ prefixes among the words in $\mathcal V_i$. We may therefore choose
$\mathcal V_i'\subset\mathcal V_i$ whose words have the same first $d_i$ digits and such
that
\[
  \#\mathcal V_i'
  \ge B^{-d_i}\#\mathcal V_i
  \ge B^{-(i-1)B}\#\mathcal V_i.
\]

Map each tuple $(J_1',\ldots,J_s')\in\prod_i\mathcal V_i'$ to the vector in
$\calD_k(X)$ determined by $I(J_1',\ldots,J_s')$. For $1\le i\le s$, the $i$-th coordinate
$J_i'$ gives
\[
  \tau_i\bigl(I(J_1',\ldots,J_s')|_{k_{i-1}}^{k_i}\bigr)
  |_{0}^{m_i-B-d_i}=J_i'|_{d_i}^{m_i-B}.
\]
The deleted prefix is fixed on $\mathcal V_i'$, so different choices of $J_i'$ give different
$i$-th coordinates. The map is therefore injective.
It follows that
\begin{align*}
  \log\#\calD_k(X)
  &\ge\sum_{i=1}^s\log\#\mathcal V_i'\\
  &\ge\sum_{i=1}^s(m_i-B)(h_i(D_i)-\varepsilon)
       -B\log B\sum_{i=1}^s(i-1)\\
  &\ge\sum_{i=1}^s m_i h_i(D_i)-\varepsilon k-O(1).
\end{align*}
Taking the maximum over all component chains gives
\begin{equation}\label{eq:reducible-box-lower-count}
  \log\#\calD_k(X)
  \ge
  \max_{\substack{(D_1,\ldots,D_s)\text{ is}\\
                   \text{a component chain}}}
  \sum_{i=1}^s m_i h_i(D_i)-\varepsilon k-O(1).
\end{equation}

Since the number of component chains is finite and $m_i/k\to a_i$,
\eqref{eq:reducible-box-upper-count} and \eqref{eq:reducible-box-lower-count} give, after
$\varepsilon\downarrow0$,
\begin{equation}\nonumber
\begin{aligned}
  \lim_{k\to\infty}\frac1k\log\#\calD_k(X)
  &=\lim_{k\to\infty}
  \frac1k
  \max_{\substack{(D_1,\ldots,D_s)\text{ is}\\
                   \text{a component chain}}}
  \sum_{i=1}^s m_i h_i(D_i)\\
  &=\max_{\substack{(D_1,\ldots,D_s)\text{ is}\\
                   \text{a component chain}}}
  \sum_{i=1}^s a_i h_i(D_i).
\end{aligned}
\end{equation}
Therefore the box dimension of
$K$ exists and is given by \eqref{eq:reducible-box-formula}.
\end{proof}

When $s=2$ and $\Lambda=\diag(n,m)$ with $n>m$, Proposition~\ref{prop:reducible-box}
becomes
\begin{equation}\label{eq:fraser-jurga-sofic}
  \dimB R(X)
  =\max_C\left\{
    \frac{h_{\mathrm{top}}(X_C)}{\log n}
    +\max_{\substack{D:\ (C,D)\text{ is}\\
                     \text{a component chain}}}
      h_{\mathrm{top}}\bigl(\tau_2(X_D)\bigr)
      \left(\frac1{\log m}-\frac1{\log n}\right)
  \right\}.
\end{equation}
This agrees with the planar formula of Fraser and Jurga
\cite[Theorem~1.2]{FraserJurga2023}.

\subsection{The case \texorpdfstring{$\dimH K=\dimB K$}{dimH K = dimB K}}

Proposition~\ref{prop:reducible-box} shows that the box dimension exists. We now prove that
dimension equality implies positivity and $\sigma$-finiteness. The critical Hausdorff
measure, however, need not be finite.

\begin{lemma}\label{lem:reducible-equality}
Let $X$ be a sofic subshift, set $K=R(X)$, and suppose that
$\dimH K=\dimB K=\gamma$. Then $\HH^\gamma(K)>0$ and $\HH^\gamma$ is $\sigma$-finite on
$K$.
\end{lemma}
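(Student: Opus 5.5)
The plan is to reduce everything to the strongly connected components, where Theorem~\ref{thm:main} and Proposition~\ref{prop:known-results} are available, via the decomposition \eqref{eq:scc-decomposition}.

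\emph{Positivity.} By \eqref{eq:reducible-Hdim} there is a component $C_{j_0}$ with $\dimH K_{C_{j_0}}=\gamma$. Since $X_{C_{j_0}}$ satisfies weak specification, Theorem~\ref{thm:main} says $\HH^\gamma(K_{C_{j_0}})$ is either positive and finite or infinite; in either case it is positive. Monotonicity and $K_{C_{j_0}}\subset K$ then give $\HH^\gamma(K)>0$. (If $X_{C}$ has a single digit, $K_C$ is a point; but the dimension-maximising component has $\gamma>0$ unless $K$ is countable, and that degenerate case can be handled directly.)

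\emph{$\sigma$-finiteness.} By the right-hand inclusion in \eqref{eq:scc-decomposition}, $K$ is covered by the countably many sets $f_u(K_{C_j})$. Each $f_u$ is affine with linear part $\Lambda^{-q}$, a contraction, so $\HH^\gamma(f_u(E))\le\HH^\gamma(E)$. It therefore suffices to show that each $\HH^\gamma$ restricted to $K_{C_j}$ is $\sigma$-finite. If $\dimH K_{C_j}<\gamma$, then $\HH^\gamma(K_{C_j})=0$. If $\dimH K_{C_j}=\gamma$, I need $\dimH K_{C_j}=\dimB K_{C_j}$, so that Proposition~\ref{prop:known-results}(iv) gives $\HH^\gamma(K_{C_j})<\infty$. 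This follows from the chain
\[
\gamma=\dimH K_{C_j}\le\dimB K_{C_j}\le\dimB K=\gamma,
\]
where the middle inequality holds because $K_{C_j}\subset K$ and box dimension is monotone. Hence every set in the countable cover has finite $\HH^\gamma$-measure.

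The main obstacle is only bookkeeping. One point is the need for the components to exist: every $x\in X$ factors as $uy$, which was already noted before \eqref{eq:scc-decomposition}. The other is making sure that applying Proposition~\ref{prop:known-results} to $X_{C_j}$ is legitimate, which requires $X_{C_j}$ to satisfy weak specification. That holds because $\Sigma_{C_j}$ is irreducible, so its factor $X_{C_j}$ inherits the joining property, as argued in the introduction for irreducible sofic shifts.
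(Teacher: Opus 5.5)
Your proposal is correct and follows essentially the paper's proof. You reduce to the strongly connected components via \eqref{eq:scc-decomposition}, and you use the chain $\gamma=\dimH K_{C_j}\le\dimB K_{C_j}\le\dimB K=\gamma$ to get finite $\HH^\gamma$-measure on the dimension-maximising components, with zero measure on the others, before covering $K$ by the images under the contracting maps $f_u$. The only differences are cosmetic: you obtain positivity from the full dichotomy of Theorem~\ref{thm:main} rather than from its equality case, and your aside on the degenerate case is not needed, since Theorem~\ref{thm:main} is stated for every weak-specification subshift.
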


\begin{proof}
Fix a finite vertex-labelled graph presentation of $X$, and let $C_1,\ldots,C_M$ be all its
strongly connected components. For every component $C_j$, write
$\delta_j=\dimH K_{C_j}$. By \eqref{eq:reducible-Hdim},
\[
  \gamma=\max_{1\le j\le M}\delta_j.
\]
If $\delta_j=\gamma$, then $K_{C_j}\subset K$ and monotonicity of box dimensions give
\[
  \gamma=\dimH K_{C_j}
  \le \underline{\dim}_{\mathrm B}K_{C_j}
  \le \overline{\dim}_{\mathrm B}K_{C_j}
  \le \dimB K=\gamma.
\]

Hence the Hausdorff and box dimensions of $K_{C_j}$ are both equal to $\gamma$.
Theorem~\ref{thm:main} therefore gives
\begin{equation}\label{eq:maximal-component-finite-measure}
  0<\HH^\gamma(K_{C_j})<\infty
  \qquad\text{whenever }\delta_j=\gamma.
\end{equation}
If $\delta_j<\gamma$, the definition of Hausdorff dimension gives
\begin{equation}\label{eq:smaller-component-zero-measure}
  \HH^\gamma(K_{C_j})=0.
\end{equation}
It follows from \eqref{eq:maximal-component-finite-measure} and
\eqref{eq:smaller-component-zero-measure} that every component set has finite
$\gamma$-dimensional Hausdorff measure. At least one component has
$\delta_j=\gamma$, so \eqref{eq:maximal-component-finite-measure} and
$K_{C_j}\subset K$ also give $\HH^\gamma(K)>0$.

Each map $f_u$ in \eqref{eq:scc-decomposition} is a Lipschitz affine map. Hence
\eqref{eq:scc-decomposition} and the finiteness of the component measures proved above show
that $K$ is a countable union of Borel sets of finite $\HH^\gamma$ measure. Thus
$\HH^\gamma$ is $\sigma$-finite on $K$.
\end{proof}

We now show that the conclusion of Lemma~\ref{lem:reducible-equality} cannot in
general be strengthened to finiteness. For an integer $m\ge2$ and a nonempty digit set $D\subseteq\{0,\ldots,m-1\}$, write
\[
E_m(D)=
\left\{\sum_{j=1}^{\infty}m^{-j}d_j:d_j\in D\right\}.
\]

\begin{example}\label{ex:reducible-equality-infinite}
Consider
\[
  \Lambda=\diag(6,3),
  \qquad
  \cS_A=\{(3,2),(5,2)\},
  \qquad
  \cS_B=\{(0,0),(1,0)\}.
\]
Let $\cS=\cS_A\cup\cS_B$. Let $X\subset\cS^{\NN}$ be the subshift of finite type in which every
transition inside $\cS_A$, every transition inside $\cS_B$, and every transition from
$\cS_A$ to $\cS_B$ is allowed, while no transition from $\cS_B$ to $\cS_A$ is allowed. Set
\[
  X_A=\cS_A^{\NN},\qquad X_B=\cS_B^{\NN},\qquad
  K=R(X),\quad K_A=R(X_A),\quad K_B=R(X_B).
\]
The coding map gives
\[
  K_A=E_6(\{3,5\})\times\{1\},
  \qquad
  K_B=E_6(\{0,1\})\times\{0\}.
\]
Consequently, for
$
  \gamma=\frac{\log2}{\log6},
$
we have
\[
  \dimH K_A=\dimB K_A=\dimH K_B=\dimB K_B=\gamma,
  \qquad
  0<\HH^\gamma(K_A),\HH^\gamma(K_B)<\infty.
\]
Combining this with equation~\eqref{eq:reducible-Hdim} gives $\dimH K=\gamma$.
The component entropies are
$h_{\mathrm{top}}(X_A)=h_{\mathrm{top}}(X_B)=\log2$, while both projected component
entropies are zero. Formula \eqref{eq:fraser-jurga-sofic} therefore gives
\[
  \dimB K=\frac{\log2}{\log6}=\gamma=\dimH K.
\]

It remains to compute the measure. For $r\ge1$, let
\[
  Y_r=\{I\omega:I\in\cS_A^r,\ \omega\in X_B\}.
\]
Set $c_B=\HH^\gamma(K_B)$. For each
$I\in\cS_A^r$, the set $R_r(I)+\Lambda^{-r}K_B$ is a copy of $K_B$ on which distances are
multiplied by $6^{-r}$. Since $6^{-r\gamma}=2^{-r}$,
\begin{equation}\label{eq:equality-example-copy-measure}
  \HH^\gamma\bigl(R_r(I)+\Lambda^{-r}K_B\bigr)=2^{-r}c_B.
\end{equation}
Their first-coordinate cylinder sets corresponding to $I\in\cS_A^r$ are separated, so for fixed $r$ these $2^r$ copies are
pairwise disjoint.
It follows from \eqref{eq:equality-example-copy-measure} that
\begin{equation}\label{eq:equality-example-layer-measure}
  \HH^\gamma(R(Y_r))=2^r2^{-r}c_B=c_B.
\end{equation}

The set $R(Y_r)$ lies on the horizontal line with second coordinate $1-3^{-r}$, so the Borel
sets $R(Y_r)$ are pairwise disjoint as $r$ varies. Therefore countable additivity and
\eqref{eq:equality-example-layer-measure} give
\[
  \HH^\gamma(K)
  \ge \sum_{r=1}^{\infty}\HH^\gamma(R(Y_r))
  =\sum_{r=1}^{\infty}c_B
  =\infty.
\]

\end{example}

\subsection{The case \texorpdfstring{$\dimH K<\dimB K$}{dimH K < dimB K}}

We give two examples with $\dimH K<\dimB K$. In the first, the critical Hausdorff
measure is positive and finite; in the second, it is infinite but $\sigma$-finite.

\begin{example}\label{ex:finite-gap-critical-measure}

Consider
\[
  \Lambda=\diag(9,3),
  \qquad
  \theta_1=\frac{\log3}{\log9}=\frac12.
\]
Take two disjoint full-shift digit sets
\[
  \cS_A=\{0,1,\ldots,6\}\times\{1,2\},\qquad
  \cS_B=(\{0,1,\ldots,8\}\times\{0\})
  \cup\{(7,1),(7,2)\}.
\]
Let $\cS=\cS_A\cup\cS_B$. Let $X\subset\cS^{\NN}$ be the subshift of finite type in which every
transition inside $\cS_A$, every transition inside $\cS_B$, and every transition from
$\cS_A$ to $\cS_B$ is allowed, while no transition from $\cS_B$ to $\cS_A$ is allowed. Set
\[
  X_A=\cS_A^{\NN},\qquad X_B=\cS_B^{\NN},\qquad
  K=R(X),\quad K_A=R(X_A),\quad K_B=R(X_B).
\]
The Bedford-McMullen dimension formulas \cite{Bedford1984,McMullen1984} give
\begin{align*}
  \dimH K_A=\dimB K_A
  &=\frac{\log(2\sqrt7)}{\log3}\approx1.51655,\\
  \dimH K_B&=\frac{\log5}{\log3}\approx1.46497,\\
  \dimB K_B&=\frac{\log\sqrt{33}}{\log3}\approx1.59133.
\end{align*}
Set $\gamma=\log(2\sqrt7)/\log3$. The identity
$K_A=E_9(\{0,\ldots,6\})\times E_3(\{1,2\})$ shows that $K_A$ is
$\gamma$-Ahlfors regular, and hence $0<\HH^\gamma(K_A)<\infty$. Since
$\dimH K_B<\gamma$, every affine image of $K_B$ on the right-hand side of
\eqref{eq:scc-decomposition} has zero $\HH^\gamma$ measure, while the corresponding images
of $K_A$ are contained in $K_A$. Therefore \eqref{eq:scc-decomposition} gives
\[
  \HH^\gamma(K)=\HH^\gamma(K_A)\in(0,\infty).
\]

The box dimension follows directly from \eqref{eq:fraser-jurga-sofic}. The component and
projected entropies are
\[
\begin{alignedat}{2}
  h_{\mathrm{top}}(X_A)&=\log14,\qquad
  &h_{\mathrm{top}}(\tau_2(X_A))&=\log2,\\
  h_{\mathrm{top}}(X_B)&=\log11,
  &h_{\mathrm{top}}(\tau_2(X_B))&=\log3.
\end{alignedat}
\]
Hence \eqref{eq:fraser-jurga-sofic} gives
\begin{equation*}
\begin{aligned}
  \dimB K
  &=\max\left\{
    \frac{\log14}{\log9}
      +\log3\left(\frac1{\log3}-\frac1{\log9}\right),
    \frac{\log11}{\log9}
      +\log3\left(\frac1{\log3}-\frac1{\log9}\right)
  \right\}\\
  &=\frac{\log42}{\log9}.
\end{aligned}
\end{equation*}
Since $\gamma=\log28/\log9$, we obtain
\[
  \dimH K=\frac{\log28}{\log9}
  <\frac{\log42}{\log9}=\dimB K,
  \qquad
  0<\HH^\gamma(K)<\infty.
\]
\end{example}

\begin{example}\label{ex:infinite-gap-critical-measure}
Consider
\[
  \Lambda=\diag(16,4),
  \qquad
  \cS_A=\{(8,2),(10,2),(12,2),(14,2)\},
  \qquad
  \cS_B=\{(1,0),(1,1)\}.
\]
Let $\cS=\cS_A\cup\cS_B$. Let $X\subset\cS^{\NN}$ be the subshift of finite type in which every
transition inside $\cS_A$, every transition inside $\cS_B$, and every transition from
$\cS_A$ to $\cS_B$ is allowed, while no transition from $\cS_B$ to $\cS_A$ is allowed.
Set
\[
  X_A=\cS_A^{\NN},
  \qquad
  X_B=\cS_B^{\NN},
  \qquad
  K=R(X),
  \qquad
  K_A=R(X_A),\quad K_B=R(X_B).
\]
The definition of $R$ gives
\[
  K_A=E_{16}(\{8,10,12,14\})\times\left\{\frac23\right\},
  \qquad
  K_B=\left\{\frac1{15}\right\}\times E_4(\{0,1\}).
\]
Both one-dimensional systems satisfy the strong separation condition. Consequently,
\[
  \dimH K_A=\dimB K_A=\frac{\log4}{\log16}=\frac12,
  \qquad
  \dimH K_B=\dimB K_B=\frac{\log2}{\log4}=\frac12,
\]
and
\begin{equation}\label{eq:transition-component-measures}
  0<\HH^{1/2}(K_A)<\infty,
  \qquad
  0<\HH^{1/2}(K_B)<\infty.
\end{equation}
Equation \eqref{eq:reducible-Hdim} gives $\dimH K=1/2$.

For $r\ge1$, let
\[
  Y_r=\{I\omega:I\in\cL_r(X_A),\ \omega\in X_B\}.
\]
Set $c_B=\HH^{1/2}(K_B)$. Since $X_A$ is the full shift on four digits,
$\#\cL_r(X_A)=4^r$.
For $I\in\cL_r(X_A)$, the set $R_r(I)+\Lambda^{-r}K_B$ varies only in the second
coordinate, where its similarity ratio is $4^{-r}$. Thus
\[
  \HH^{1/2}\bigl(R_r(I)+\Lambda^{-r}K_B\bigr)
  =2^{-r}c_B.
\]
Their first-coordinate cylinder sets are separated, so for fixed $r$ these $4^r$ copies are
pairwise disjoint, and hence
$\HH^{1/2}(R(Y_r))=4^r2^{-r}c_B=2^rc_B$. Since $R(Y_r)\subset K$,
\[
  \HH^{1/2}(K)\ge2^r c_B\qquad r\ge1.
\]
Letting $r\to\infty$ gives $\HH^{1/2}(K)=\infty$. On the other hand,
\eqref{eq:scc-decomposition} and \eqref{eq:transition-component-measures} show that $K$ is
covered by countably many Borel sets of finite $\HH^{1/2}$ measure. Thus
$\HH^{1/2}$ is $\sigma$-finite on $K$.

The box dimension follows from \eqref{eq:fraser-jurga-sofic}. We have
\[
\begin{alignedat}{2}
  h_{\mathrm{top}}(X_A)&=\log4,\qquad
  &h_{\mathrm{top}}(\tau_2(X_A))&=0,\\
  h_{\mathrm{top}}(X_B)&=\log2,
  &h_{\mathrm{top}}(\tau_2(X_B))&=\log2.
\end{alignedat}
\]
Hence \eqref{eq:fraser-jurga-sofic} gives
\[
\begin{aligned}
  \dimB K
  &=\max\left\{
    \frac{\log4}{\log16}
      +\log2\left(\frac1{\log4}-\frac1{\log16}\right),
    \frac{\log2}{\log16}
      +\log2\left(\frac1{\log4}-\frac1{\log16}\right)
  \right\}\\
  &=\frac34.
\end{aligned}
\]
Therefore
\[
  \dimH K=\frac12<\frac34=\dimB K,
  \qquad
  \HH^{1/2}(K)=\infty,
\]
and this measure is $\sigma$-finite on $K$.
\end{example}

\begin{proof}[Proof of Theorem~\ref{thm:reducible}]
Proposition~\ref{prop:reducible-box} proves that the box dimension exists and gives its
formula. Suppose first that $\dimH K=\dimB K=\gamma$.
Lemma~\ref{lem:reducible-equality} gives $\HH^\gamma(K)>0$ and shows that
$\HH^\gamma$ is $\sigma$-finite on $K$. Example~\ref{ex:reducible-equality-infinite} shows
that $\HH^\gamma(K)$ may nevertheless be infinite. This proves \textup{(i)}.

For \textup{(ii)}, Example~\ref{ex:finite-gap-critical-measure} gives a dimension gap
with positive and finite critical Hausdorff measure, whereas
Example~\ref{ex:infinite-gap-critical-measure} gives a dimension gap with infinite but
$\sigma$-finite critical Hausdorff measure.
\end{proof}

\end{document}